\theoremstyle{Theorem}
\newtheorem{thm}{Theorem}[section]
\newtheorem{lem}[thm]{Lemma}
\newtheorem{prop}[thm]{Proposition}
\newtheorem{cor}[thm]{Corollary}
\theoremstyle{remark}
\newtheorem{ex}[thm]{Example}
\theoremstyle{definition}
\newtheorem{dfn}[thm]{Definition}
\begin{document}

%+Title
\title
{Cuntz-Pimsner Algebras and Twisted Tensor Products}
\author{Adam Morgan}
\address{School of Mathematical and Statistical Sciences
\\Arizona State University
\\Tempe, Arizona 85287}
\email{anmorgan@asu.edu}
\date{\today}
\maketitle
%-Title

\begin{abstract}
Given two correspondences $X$ and $Y$ and a discrete group $G$ which acts
on $X$ and coacts on $Y$, one can define a twisted tensor product $X\boxtimes
Y$ which simultaneously generalizes ordinary tensor products and crossed
products by group actions and coactions. We show that, under suitable conditions,
the Cuntz-Pimsner algebra of this product, $\mathcal O_{X\boxtimes Y}$, is
isomorphic to a ``balanced'' twisted tensor product 
$\mathcal O_X\boxtimes_\mathbb T\mathcal O_Y$ of the Cuntz-Pimsner algebras
of the original correspondences. We interpret this result in several
contexts and connect it to existing results on Cuntz-Pimsner algebras
of crossed products and tensor products.
\end{abstract}

\section{Introduction}
In the author's previous paper \cite{Morgan}, 
it was shown that, under suitable conditions,
if $X$ and $Y$ are $C^*$-correspondences over $C^*$-algebras $A$ and $B$,
the Cuntz-Pimsner algebra $\mathcal O_{X\otimes Y}$ is isomorphic to
a subalgebra $\mathcal O_X\otimes_\mathbb T \mathcal O_Y$ of
$\mathcal O_X\otimes\mathcal O_Y$. In the present paper, we will
extend this result from ordinary tensor products to a certain class of 
``twisted'' tensor products.

Many constructions in operator algebras may be thought of as
``twisted'' tensor products, for example: crossed products by actions or
coactions of groups, $\mathbb Z_2$-graded tensor products and so on. In 
\cite{Woronowicz}, a very general construction of a ``twisted tensor product''
is presented.
Their construction involves two quantum groups $\mathbb G=(S,\Delta_S)$ and
$\mathbb H=(T,\Delta_T)$, two coactions $(A,\mathbb G,\delta_A)$ and $(B,\mathbb
H,\delta_B)$, and a bicharacter $\chi\in\mathcal U(\widehat S\otimes\widehat
T)$. Given this information, they define a twisted tensor product $A\boxtimes
_\chi B$. They also show that if $X$ and $Y$ are correspondences over $A$
and $B$ with compatible coactions of $\mathbb G$ and $\mathbb H$,
there is a natural way of defining a correspondence 
$X\boxtimes_\chi Y$ over $A\boxtimes_\chi B$.  
In this paper, we will work with a special case of this general
construction which is general enough to be useful but simple enough to be
very tractable.

Specifically, we are interested in the case where, for some discrete group
$G$, $S= c_0(G)$,
$T=C^*_r(G)$ and $\chi=W^G\in\mathcal U\big(\widehat{C^*_r(G)}\otimes C^*_r(G)\big)$
is the reduced bicharacter of $C^*_r(G)$ viewed as a quantum group.
In this case, we may view the coaction of $c_0(G)$ as an action of $G$
on $A$ (or $X$), and we will be able to describe most of the algebraic
properties of $A\boxtimes_\chi B$ and $X\boxtimes_\chi Y$ entirely in terms of elementary tensors.

In this simplified setting, we will prove our main result:
if $J_{X\boxtimes_\chi Y}=J_X\boxtimes_\chi J_Y$ then
$\mathcal O_{X\boxtimes_\chi Y}\cong\mathcal O_X\boxtimes_\chi
\mathcal O_Y$ (where $J_X=\phi^{-1}\big(\mathcal K(X)\big)\cap
\big(\text{ker}(\phi)\big)^\perp$ is the Katsura ideal). We will
then apply this result to some specific examples.
\section{Preliminaries}
\subsection{Correspondences and Cuntz-Pimsner Algebras}
For a general reference on correspondences, we refer the reader to 
\cite{Lance}. For Cuntz-Pimsner algebras, we recommend \cite{Katsura1},
\cite{Katsura2} and the brief overview in \cite{GraphAlgebraBook}.
We will briefly recall some of the basic facts here.

Suppose $A$ is a $C^*$-algebra and $X$ is a right $A$-module.
We say that $X$ has an \emph{$A$-valued inner product} if there is 
a map 
$$X\times X\ni(x,y)\mapsto\langle x,y\rangle_A\in A$$
which is $A$-linear in the second variable and satisfies the following
\begin{enumerate}
\item
$\langle x,x\rangle_A\geq 0$ for all $x\in X$ with equality if and only if
$x=0$
\item
$\langle x, y\rangle^*_A=\langle y,x\rangle_A$ for all $x,y\in X$
\item
$\langle x,y\cdot a\rangle_A=\langle x,y\rangle_Aa$ for all $x,y\in X$ 
and $a\in A$.
\end{enumerate}

We can define the following norm on $X$:
$$\|x\|_A:=\|\langle x,x\rangle_A\|^{\frac 1 2}$$

If $X$ is complete under the norm $\|\cdot\|_A$ defined above,
$X$ is called a \emph{right Hilbert $A$-module}.
Note that if $A=\mathbb C$ then $X$ is just a Hilbert space and
we can think of Hilbert modules as generalized Hilbert spaces where the scalars
are elements of some $C^*$-algebra $A$. Sometimes we will write $(X,A)$ or
$X_A$ if we wish to emphasize $A$.

Let $A$ be a $C^*$-algebra and let 
$X$ be a right Hilbert $A$-module. Suppose $T:X\to X$ is an
$A$-module homomorphism. 
If there is an $A$-module homomorphism $T^*$ such that
$$\langle T^*x,y\rangle_A=\langle x, Ty\rangle_A$$
for all $x,y\in X$, then we call $T$ \emph{adjointable}
and we refer to $T^*$ as the \emph{adjoint} of $T$. 
The set of all adjointable
operators on $X$ with the operator norm is a $C^*$-algebra. We denote 
this algebra by $\mathcal L(X)$. Given $x,y\in X$ we define the operator
$\Theta_{x,y}$ as follows: $\Theta_{x,y}(z)=x\langle y,z\rangle_A$.
The closed linear span of all such operators is a subalgebra of $\mathcal
L(X)$ which we call the \emph{generalized compact operators}. It is denoted
by $\mathcal K(X)$.

Suppose $X$ is a
right Hilbert $B$-module. Suppose further that we have a homomorphism $\phi:A\to\mathcal
L(X)$ for some $C^*$-algebra $A$.
This is called a \emph{left action of $A$ by adjointable operators}.
 We call the triple $(A,X,B)$ a 
\emph{$C^*$-correspondence} or simply a \emph{correspondence}.
For $a\in A$ and $x\in X$, we will write $a\cdot x$ for $\phi(a)(x)$.
If $A=B$ we call this a \emph{correspondence over $A$} (or $B$).
We call the left-action \emph{injective} if $\phi$ is injective
and \emph{non-degenerate} if $\phi(A)X=X$. If $\phi(A)\subseteq
\mathcal K(X)$, we say that the left action is \emph{implemented
by compacts}.
We will sometimes write $_AX_B$ to indicate that $X$ in an $A-B$
correspondence. Given an $A_1-B_2$ correspondence $X$ and an
$A_2-B_2$ correspondence $Y$, a 
\emph{correspondence isomorphism} is a triple $(\varphi_A,\Phi,\varphi_B)$
where $\Phi$
is a linear isomorphism $\Phi:X\to Y$ and $\varphi_A:A_1\to A_2$ and $\varphi_B
:B_1\to B_2$ are isomorphisms of $C^*$-algebras
such that the left and right actions and the inner product are preserved
by the maps:
\begin{align*}
\Phi(ax)&=\varphi_A(a)\Phi(x)\\
\Phi(xb)&=\Phi(x)\varphi_B(b)\\
\big\langle\Phi(x),\Phi(x')\big\rangle^Y_B&=\varphi_B\Big(\big\langle x,x'\rangle^X_B
\Big)
\end{align*}
Where $\langle\cdot,\cdot\rangle_B^Y$ denotes the $B$-valued inner product
on $Y$ and $\langle\cdot,\cdot\rangle_B^X$ denotes the $B$-valued inner product
on $X$. It will be convenient to introduce the following definition:

\begin{dfn}\label{GeneratingSystem}
Let $X$ be an $A-B$ correspondence. A \emph{generating system} for
$X$ is a triple $(A^0,X^0,B^0)$ where $A^0\subseteq A$, $X^0\subseteq X$
and $B^0\subseteq B$ such that $\overline{\text{span}}(A^0)=A$,
$\overline{\text{span}}(X^0)=X$, and $\overline{\text{span}}(B^0)=B$
and such that for all $x\in X^0$ we have that
$ax, xb\in X^0$ for all $a\in A^0$ and $b\in B^0$. If $A=B$ and $A^0=B^0$
we will denote the generating system by $(X^0,A^0)$.
\end{dfn}
We will make frequent use of the following fact:
\begin{lem}\label{IsomLemma}
Let $X$ be an $A_1-B_1$ correspondence and $Y$ be a $A_2-B_2$ correspondence.
Suppose that $(A^0_1,X^0,B^0_1)$ and
$(A^0_2,Y^0,B^0_2)$ are generating
sets for $X$ and $Y$ respectively. Let $\varphi_A:A_1\to A_2$ and 
$\varphi_B:B_1\to B_2$ be isomorphisms. Suppose there is a bijection
$\Phi_0:X^0\to Y^0$,
which preserves the inner product, left and right actions,
and scalar multiplication. That is
\begin{align*}
\big\langle\Phi_0(x),\Phi_0(x')\big\rangle_B^Y&=\varphi_B\Big(\langle x,x'
\rangle_B^X\Big)
&\text{for all $x,x'\in X^0$}\\
\Phi_0(ax)&=\varphi_A(a)\Phi_0(x) & \text{for all $x\in X^0$ and $a\in A^0$}\\
\Phi_0(xb)&=\Phi_0(x)\varphi_ B(b) & \text{for all $x\in X^0$ and $b\in B^0$}\\
\Phi_0(cx)&=c\Phi_0(x) & \text{for all $x\in X^0$ and $c\in\mathbb C$}
\end{align*}
Then $\Phi_0$ extends linearly and continuously to a correspondence
isomorphism $\Phi:X\to Y$.
\end{lem}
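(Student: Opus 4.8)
The plan is to first extend $\Phi_0$ linearly to the algebraic span $\text{span}(X^0)$, verify that this extension is isometric (which in particular makes it well-defined), then use density of $\text{span}(X^0)$ in $X$ to extend continuously to all of $X$, and finally check that the resulting map $\Phi$ respects the two module actions and the inner product. Throughout I will use that a $*$-isomorphism of $C^*$-algebras, such as $\varphi_B$, is automatically isometric, and that the left action $\phi$ and the right action are norm-decreasing, so that every map in sight is bounded.

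For $x=\sum_i c_i x_i$ with $x_i\in X^0$ and $c_i\in\mathbb C$, I set $\Phi(x)=\sum_i c_i\Phi_0(x_i)$. The heart of the argument is the computation, for $x=\sum_i c_i x_i$ and $x'=\sum_j d_j x_j'$ in $\text{span}(X^0)$,
\begin{align*}
\big\langle\Phi(x),\Phi(x')\big\rangle_{B_2}^Y
&=\sum_{i,j}\overline{c_i}d_j\big\langle\Phi_0(x_i),\Phi_0(x_j')\big\rangle_{B_2}^Y
=\sum_{i,j}\overline{c_i}d_j\,\varphi_B\Big(\langle x_i,x_j'\rangle_{B_1}^X\Big)\\
&=\varphi_B\Big(\sum_{i,j}\overline{c_i}d_j\langle x_i,x_j'\rangle_{B_1}^X\Big)
=\varphi_B\Big(\langle x,x'\rangle_{B_1}^X\Big),
\end{align*}
where I use that the inner product is conjugate-linear in the first variable and linear in the second, together with the hypothesis that $\Phi_0$ intertwines the inner products via $\varphi_B$. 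Taking $x'=x$ and norms gives $\|\Phi(x)\|_Y^2=\|\varphi_B(\langle x,x\rangle_{B_1}^X)\|=\|\langle x,x\rangle_{B_1}^X\|=\|x\|_X^2$, since $\varphi_B$ is isometric. In particular, if $x=0$ then $\Phi(x)=0$, so the formula defining $\Phi$ does not depend on the chosen representation of $x$ as a linear combination of elements of $X^0$.

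Being isometric on the dense subspace $\text{span}(X^0)$, the map $\Phi$ extends uniquely to an isometry $\Phi:X\to Y$. Its range is closed (an isometric image of a complete space) and contains $\text{span}(Y^0)$, since $\Phi_0$ maps $X^0$ onto $Y^0$; as $\overline{\text{span}}(Y^0)=Y$, the map $\Phi$ is onto, hence a linear isomorphism, and the inner-product identity passes to all of $X$ by continuity. For the left action, the identity $\Phi(ax)=\varphi_A(a)\Phi(x)$ is a hypothesis for $a\in A_1^0$ and $x\in X^0$. Both sides are bounded and linear in $a$ and agree on $A_1^0$, so they agree for all $a\in A_1=\overline{\text{span}}(A_1^0)$; fixing such an $a$, both sides are bounded and linear in $x$ and agree on $X^0$, so they agree for all $x\in X$. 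The right action $\Phi(xb)=\Phi(x)\varphi_B(b)$ is handled identically using $B_1^0$. This exhibits $(\varphi_A,\Phi,\varphi_B)$ as a correspondence isomorphism.

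The main obstacle is precisely the well-definedness of the linear extension in the second paragraph: since a generating system makes no linear-independence demand on $X^0$, there is no basis to expand against, and one must instead show directly that a vanishing combination $\sum_i c_i x_i=0$ forces $\sum_i c_i\Phi_0(x_i)=0$. It is exactly the inner-product identity, combined with the $C^*$-isometry of $\varphi_B$, that rescues this. Everything else --- surjectivity and preservation of the two actions --- is a routine density-and-continuity bootstrapping once isometry is in hand, and the scalar-multiplication compatibility in the hypotheses serves only to keep these extensions consistent.
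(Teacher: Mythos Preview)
Your proof is correct and follows essentially the same approach as the paper's: define $\Phi$ linearly on $\text{span}(X^0)$, use the inner-product hypothesis together with the isometry of $\varphi_B$ to obtain well-definedness and isometry, extend by density, and then bootstrap the module actions from $A_1^0$, $B_1^0$, $X^0$ to all of $A_1$, $B_1$, $X$ by linearity and continuity. The only cosmetic difference is that the paper verifies well-definedness by a separate $\|y-y'\|^2=0$ computation before establishing inner-product preservation, whereas you derive both from a single computation; your surjectivity argument via closed range is also slightly slicker than the paper's, but the underlying idea is the same.
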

\begin{proof}
Let $x\in\text{span}(X^0)$, then $x=\sum_ic_ix_i$ for some $x_i\in X^0$ and
$c_i\in \mathbb C$.
We define $\Phi(x)=\sum_ic_i\Phi_0(x_i)$. First, we must verify that $\Phi$
is well defined on $\text{span}(X_0)$. Suppose $\sum_{i=1}^nc_ix_i$ and $\sum_
{i=1}^md_ix'_i$
are both equal to $x\in X$ with $x_i,x'_i\in X^0$. Let $y=\sum_{i=1}^nc_i\Phi_0(x_i)$
and $y'=\sum_{i=1}^md_i\Phi_0(x'_i)$. Then 
\begin{align*}
\|y-y'\|^2&=\big\langle y-y',y-y'\rangle_{B'}\\
&=\Big\langle\sum_{i=1}^nc_i\Phi_0(x_i)-\sum_{i=1}^md_i\Phi_0(x'_i),
\sum_{i=1}^nc_i\Phi_0(x_i)-\sum_{i=1}^md_i\Phi_0(x'_i)\Big\rangle_{B'}\\
&=\sum_{i,j=1}^{n,n}\overline{c_i}c_j\langle\Phi_0(x_i),\Phi_0(x_j)
\rangle_{B'}+\sum_{i,j=1}^{m,m}\overline{d_i}d_j\langle\Phi_0(x'_i),
\Phi_0(x'_j)\rangle_{B'}\\
&\quad-\sum_{i,j=1}^{n,m}\overline{c_i}d_j\langle\Phi_0(x_i),\Phi_0(x'_j)
\rangle_{B'}-\sum_{i,j=1}^{m,n}\overline{d_i}c_j\langle\Phi_0(x'_i),\Phi_0(x_j)
\rangle_{B'}\\
&=\sum_{i,j=1}^{n,n}\overline{c_i}c_j\varphi_B\big(\langle x_i,x_j
\rangle_B\big)+\sum_{i,j=1}^{m,m}\overline{d_i}d_j\varphi_B\big(\langle
x'_i,x'_j\rangle_B\big)\\
&\quad-\sum_{i,j=1}^{n,m}\overline{c_i}d_j\varphi_B\big(\langle x_i,x'_j
\rangle_B\big)-\sum_{i,j=1}^{m,n}\overline{d_i}c_j\varphi_B\big(\langle
x'_i,x_j\rangle_B\big)\\
&=\varphi_B\Big(\Big\langle\sum_{i=1}^nc_ix_i-\sum_{i=1}^md_ix'_i,
\sum_{i=1}^nc_ix_i-\sum_{i=1}^md_ix'_i\Big\rangle_B\Big)\\
&=\varphi_B\big(\langle x-x,x-x\rangle_B\big)\\
&=0
\end{align*}
where we have used the fact that $\varphi$ is an isomorphism and thus
linear.

For $x,x'\in X^0$ we have
\begin{align*}
\big\langle\Phi(x),\Phi(x')\big\rangle_B^Y&=\Big\langle\sum_ic_i\Phi_0(x_i),
\sum_jc'_j\Phi_0(x'_j)\Big\rangle_B^Y\\
&=\sum_{i,j}\overline c_ic'_j\big\langle\Phi_0(x_i),\Phi_0(x'_j)\big\rangle_B^Y\\
&=\sum_{i,j}\overline c_ic'_j\varphi_B\Big(\langle x_i,x_j'\rangle_B^X\Big)\\
&=\varphi_B\Big(\Big\langle\sum_ic_ix_i,\sum_jc'_jx'_j\Big\rangle_B^X\Big)\\
&=\varphi_B\Big(\langle x,x'\rangle_B^X\Big)
\end{align*}
Therefore, $\Phi$ preserves the inner product on $\text{span}(X^0)$ and thus
also preserves the norm on $\text{span}(X^0)$ and so $\Phi$ is bounded and
can be extended continuously to $\overline{\text{span}}(X^0)=X$. Hence, for
any $z\in X$ we can approximate $z\approx\sum_i c_iz_i$ with $z_i\in X^0$. Thus
for any $a_0\in A^0$ we have
\begin{align*}
\Phi(a_0z)&\approx \Phi\Big(a_0\sum_ic_iz_i\Big)
=\sum_ic_i\Phi(a_0z_i)
=\varphi_A(a_0)\sum_ic_i\Phi(z_i)
\approx\varphi_A(a_0)\Phi(z)
\end{align*}
so $\Phi(a_0z)=\varphi_A(a_0)\Phi(z)$ and similarly $\Phi(zb_0)=\Phi(z)
\varphi_B(b_0)$ for $b_0\in
B^0$. For arbitrary $a\in A$ we may approximate $a\approx\sum_ic_ia_i$ with
$a_i\in A^0$ and we see that
$$\Phi(az)\approx\sum c_i\Phi(a_iz)=\varphi_A\Big(\sum_ic_ia_i\Big)\Phi(z)\approx
\varphi_A(a)\Phi(z)$$
So $\Phi(az)=\varphi_A(a)\Phi(z)$ for all $a\in A$ and similarly $\Phi(zb)=\Phi(z)
\varphi_B(b)$
for all $b\in B$.
We already know that $\Phi$ is injective since it preserves the norm, so
we only have to show that it is surjective. To see this note that  
$$\Phi(X)=\overline{\text{span}}\big(\Phi_0(X^0)\big)=\overline{\text{span}}
(Y^0)=Y$$
So we have established that $\Phi$ is a linear isomorphism from $X$ to $Y$
which preserves the left and right actions and the inner product, in other
words $\Phi$ is a correspondence isomorphism.
\end{proof}

Given a Hilbert module $(X,A)$, we define the $\emph{linking algebra}$
of $(X,A)$ to be the $C^*$-algebra $L(X):=\mathcal K(X\oplus A)$.
There are complimentary projections $p$ and $q$ in $M(L(X))$ such that $pL(X)p
\cong\mathcal K(X)$, $pL(X)q\cong X$, and $qL(X)q\cong A$. This gives $L(X)$
the following block matrix decompostion:
$$L(X)=\begin{bmatrix}
\mathcal K(X) & X\\
\overline X & A
\end{bmatrix}$$ with 
$p = \begin{bmatrix}1&0\\0&0\end{bmatrix}$
and $q = \begin{bmatrix}0&0\\0&1\end{bmatrix}$.
The benefit of using linking algebras is that the algebraic properties
of $X$ are encoded into the multiplicative structure of $L(X)$:
\begin{align}
\begin{bmatrix}0&x\\0&0\end{bmatrix}\cdot\begin{bmatrix}0&0\\0&a\end{bmatrix}
&=\begin{bmatrix}0&xa\\0&0\end{bmatrix}\\
\begin{bmatrix}0&x\\0&0\end{bmatrix}^*\cdot\begin{bmatrix}0&y\\0&0\end{bmatrix}
&=\begin{bmatrix}0&0\\0&\langle x,y\rangle_A\end{bmatrix}\\
\begin{bmatrix}0&x\\0&0\end{bmatrix}\cdot\begin{bmatrix}0&y\\0&0\end{bmatrix}^*
&=\begin{bmatrix}\Theta_{x,y}&0\\0&0\end{bmatrix}
\end{align}

Given a correspondence $X$ over $A$, a \emph{Toeplitz representation}
of $(X,A)$ in a $C^*$-algebra $B$ is a pair $(\psi,\pi)$
where $\psi:X\to B$ is a linear map and $\pi:A\to B$ is a homomorphism 
satisfying:
\begin{align*}
\psi(xa)&=\psi(x)\pi(a)\\
\pi\big(\langle x,y\rangle_A\big)&=\psi(x)^*\psi(y)\\
\psi(ax)&=\pi(a)\psi(x)
\end{align*} 
By $C^*(\psi,\pi)$ we shall mean the $C^*$-subalgebra of $B$ generated
by the images of $\psi$ and $\pi$ in $B$.
There is a unique (up to isomorphism) $C^*$-algebra $\mathcal T_X$, called
the \emph{Toeplitz algebra} of $X$, which is generated by a universal 
Toeplitz representation $(i_X,i_A)$.

Let $X^{\otimes n}$ denote the $n$-fold internal tensor product 
(see \cite{Lance} for information on internal tensor products) of $X$
with itself. By convention, we let $X^{\otimes 0}=A$. Let $(\psi,\pi)$ 
be a Toeplitz representation of $X$ in $B$. Define the map 
$\psi^n:X^{\otimes n}\to B$ for each $n\in \mathbb N$ as follows:
let $\psi^0=\pi$, $\psi^1=\psi$, and set $\psi^n(x\otimes y)=\psi(x)
\psi^{n-1}(y)$ (where $x\in X$ and $y\in 
X^{\otimes n-1}$) for each $n>1$ .
%\end{dfn}

We summarize Proposition 2.7 of \cite{Katsura1} as follows:
Let  $(\psi,\pi)$ be
a Toeplitz representation of a correspondence $X$ over $A$. Then
$$C^*(\psi,\pi)=\overline{span}\{\psi^n(x)\psi^m(y)^*:x\in X^{\otimes n},
y\in X^{\otimes m}\}$$
By Lemma 2.4 of \cite{Katsura1},
for each $n\in\mathbb N$ there is a homomorphism $\psi^{(n)}:
\mathcal K(X^{\otimes n})\to B$ such that:
\begin{enumerate}
\item
$\pi(a)\psi^{(n)}(k)=\psi^{(n)}(\phi(a)k)$ for all $a\in A$ and all $k\in
\mathcal K(X^{\otimes n})$.
\item
$\psi^{(n)}(k)\psi(x)=\psi(kx)$ for all $x\in X$ and all $k\in \mathcal 
K(X^{\otimes n})$
\end{enumerate}
We define
the \emph{Katsura ideal} of $A$ to be the ideal:
$$J_X=\{a\in A:\phi(a)\in \mathcal K(X) \text{ and }ab=0\text{ for all
}b\in ker(\phi)\}$$
Where $\phi$ is the left action. This is often written as
as $J_X=\phi^{-1}\big(\mathcal K(X)\big)\cap\big(ker(\phi)\big)^\perp$. 
%\end{dfn}
In many cases of interest, the left action of a correspondence is injective
and implemented by compact operators.
In this case we have $J_X=A$.

A Toeplitz representation is said to be \emph{Cuntz-Pimsner covariant}
if $\psi^{(1)}\big(\phi(a)\big)=\pi(a)$ for all $a\in J_X$. The 
\emph{Cuntz-Pimsner algebra} $\mathcal O_X$ is the quotient of 
$\mathcal T_X$ by the ideal generated by 
$$\{i_X^{(1)}\big(\phi(a)\big)-i_A(a):a\in J_X\}$$
It can be shown that $\mathcal O_X$ is generated by a universal Cuntz-Pimsner
covariant representation $(k_X,k_A)$.

\subsection{Actions and Coactions of Discrete Groups}
Working with actions and coactions of locally compact groups and their 
crossed products can be somewhat complicated, for a general reference we
suggest the appendix of \cite{enchilada}. Restricting attention
to discrete groups affords many simplifications. In this section, we will
summarize some of these simplifications.

\begin{prop}\label{CrossedByActionDiscrete}
Let $G$ be a discrete group and let $\alpha$ be an action of $G$ on a 
$C^*$-algebra $A$. Let $(i_A,i_G)$ be the canonical representation of the
system $(A,G,\alpha)$ in the reduced crossed product $A\rtimes_{\alpha,r}
G$. Then $A\rtimes_{\alpha,r} G$ is the closed linear span of elements of
the form $i_A(a)i_G(s)$ where $a\in A$ and $s\in G$. These have the following
algebraic properties:
\begin{align}
\big(i_A(a)i_G(s)\big)\big(i_A(b)i_G(t)\big)&=i_A\big(a\alpha_s(b)\big)i_G(st)\\
\big(i_A(a)i_G(s)\big)^*&=i_A\big(\alpha_{s^{-1}}(a^*)\big)i_G(s^{-1})
\end{align}
\end{prop}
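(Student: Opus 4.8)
The plan is to derive all three assertions from the defining properties of the canonical covariant pair $(i_A,i_G)$. Recall that $i_A$ is a $*$-homomorphism, that $i_G$ is a unitary representation of $G$, so that $i_G(s)i_G(t)=i_G(st)$, $i_G(e)=1$, and $i_G(s)^*=i_G(s^{-1})$, and that the two maps satisfy the covariance relation $i_G(s)i_A(a)i_G(s)^*=i_A\big(\alpha_s(a)\big)$. Everything below is a formal consequence of these relations, with the single exception noted in the next paragraph.

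For the spanning statement, I would appeal to the standard description of the reduced crossed product of a discrete group. The finitely supported $A$-valued functions $C_c(G,A)$ form a dense $*$-subalgebra of $A\rtimes_{\alpha,r}G$, and under the canonical representation a function supported on a finite set $F$, written $\sum_{s\in F}a_s\delta_s$, is carried to $\sum_{s\in F}i_A(a_s)i_G(s)$. Taking closed linear spans then shows that $A\rtimes_{\alpha,r}G=\overline{\text{span}}\{i_A(a)i_G(s):a\in A,\ s\in G\}$. This is the only step that uses genuine structure of the crossed product rather than pure algebra, and I expect it to be the main (though mild) obstacle, since it requires unwinding the definition of the reduced norm and the regular representation.

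It remains to verify the two algebraic identities, and these follow once the covariance relation is put in ``straightened'' form: multiplying $i_G(s)i_A(b)i_G(s)^*=i_A\big(\alpha_s(b)\big)$ on the right by $i_G(s)$ and using $i_G(s)^*i_G(s)=i_G(e)=1$ gives $i_G(s)i_A(b)=i_A\big(\alpha_s(b)\big)i_G(s)$. For the product formula, I would insert this into $\big(i_A(a)i_G(s)\big)\big(i_A(b)i_G(t)\big)$ to slide $i_A(b)$ past $i_G(s)$, then collapse $i_A(a)i_A\big(\alpha_s(b)\big)=i_A\big(a\alpha_s(b)\big)$ and $i_G(s)i_G(t)=i_G(st)$. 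For the adjoint formula, I would compute $\big(i_A(a)i_G(s)\big)^*=i_G(s)^*i_A(a)^*=i_G(s^{-1})i_A(a^*)$ and apply the straightened relation with $s^{-1}$ in place of $s$ to obtain $i_A\big(\alpha_{s^{-1}}(a^*)\big)i_G(s^{-1})$. Both computations are routine and require no further input beyond the covariance relation and the multiplicativity of $i_A$ and $i_G$.
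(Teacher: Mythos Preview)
Your proposal is correct and follows essentially the same approach as the paper: appeal to the definition of the crossed product for density, and derive the two algebraic identities from the covariance relation after rewriting it in the ``straightened'' form $i_G(s)i_A(b)=i_A\big(\alpha_s(b)\big)i_G(s)$. The paper's proof is in fact terser than yours, simply citing the covariance relation and leaving the routine manipulations to the reader.
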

\begin{proof}
The fact that the $i_A(a)i_G(s)$ densely span $A\rtimes_{\alpha,r}G$ follows
from the definition of the crossed product. The algebraic properties follow
easily from the fact that $i_A(a)i_G(s)=i_G(s)i_A\big(\alpha_s(a)\big)$ which
we obtain from $i_G(s)^*i_A(a)i_G(s)=i_A\big(\alpha_s(a)\big)$.
\end{proof}
\begin{prop}\label{CorrCrossedByActionDiscrete}
Let $(\gamma,\alpha)$ be an action of a discrete group $G$ on a correspondence
$(X,A)$. Let $(i_X,i_A,i^X_G,i^A_G)$ be the canonical representation
of the system in the crossed product $(X\rtimes_{\gamma,r}G,A\rtimes_{\alpha,r}
G)$. Then $X\rtimes_{\gamma,r}G$ is the closed linear span of $i_X(x)i_G^X(s)$
where $x\in X$ and $s\in G$. These satisfy the following algebraic properties:
\begin{align}
\big(i_X(x)i^X_G(s)\big)\big(i_A(a)i^A_G(t)\big)&=i_X\big(x\alpha_s(a)\big)i^X_G(st)
\label{ActionRmod}\\
\big\langle i_X(x)i_G^X(s),i_X(y)i^Y_G(t)\big\rangle_{A\rtimes_{\alpha,r}G}&=i_A\big(
\alpha_{s^{-1}}\big(\langle x,y\rangle_A\big)\big)i^A_G(s^{-1}t)
\label{ActionIP}\\
\big(i_A(a)i_G^A(s)\big)\big(i_X(x)i_G^X(t)\big)&=i_X\big(a\gamma_s(x)\big)i_G^X(st)
\label{ActionLmod}
\end{align}
\end{prop}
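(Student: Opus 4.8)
The plan is to reduce the entire statement to the algebra-level Proposition~\ref{CrossedByActionDiscrete} by working inside the linking algebra $L(X)=\mathcal K(X\oplus A)$, whose multiplication rules (1)--(3) are exactly the device that turns the module operations appearing in \eqref{ActionRmod}--\eqref{ActionLmod} into ordinary multiplication and involution in a $C^*$-algebra. The first step is to lift the correspondence action $(\gamma,\alpha)$ to a $C^*$-dynamical system $(L(X),G,\beta)$, where $\beta_s$ acts diagonally on the block decomposition of $L(X)$: by $k\mapsto\gamma_s\circ k\circ\gamma_{s^{-1}}$ on the $\mathcal K(X)$ corner, by $\gamma_s$ on the $X$ corner (and its conjugate on $\overline X$), and by $\alpha_s$ on the $A$ corner. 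The verification that each $\beta_s$ is a $*$-automorphism is precisely the statement that $\gamma_s$ and $\alpha_s$ are compatible with the left action, right action, and inner product, so this step is routine from the hypotheses. Note also that the corner projections $p,q\in M(L(X))$ are $\beta$-invariant.

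Next I would invoke the identification $L(X)\rtimes_{\beta,r}G\cong L\big(X\rtimes_{\gamma,r}G\big)$, under which the four corners of the left-hand side are carried onto $\mathcal K(X\rtimes_\gamma G)$, $X\rtimes_\gamma G$, its conjugate, and $A\rtimes_{\alpha,r}G$; this is the statement that forming the reduced crossed product commutes with forming the linking algebra, and it is built into the construction of the correspondence crossed product (cf.\ \cite{enchilada}). Under this identification the canonical generators correspond as $i_X(x)i^X_G(s)\leftrightarrow i_{L(X)}\big(e_{12}(x)\big)i_G(s)$ and $i_A(a)i^A_G(s)\leftrightarrow i_{L(X)}\big(e_{22}(a)\big)i_G(s)$, where $e_{12}(x)$ and $e_{22}(a)$ denote the matrices with single nonzero entry $x$ (resp.\ $a$) in the indicated corner. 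Applying Proposition~\ref{CrossedByActionDiscrete} to $L(X)$ shows that $L(X)\rtimes_{\beta,r}G$ is densely spanned by the elements $i_{L(X)}(m)i_G(s)$; compressing by the $\beta$-invariant projections $p$ and $q$ then yields the claimed dense spanning of $X\rtimes_{\gamma,r}G$ by the $i_X(x)i^X_G(s)$.

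With these identifications in place, each algebraic identity becomes a one-line computation in $L(X)\rtimes_{\beta,r}G$ using Proposition~\ref{CrossedByActionDiscrete} followed by the matrix rules (1)--(3). For \eqref{ActionRmod} I would multiply $i_{L(X)}(e_{12}(x))i_G(s)$ by $i_{L(X)}(e_{22}(a))i_G(t)$ to get $i_{L(X)}\big(e_{12}(x)\,\beta_s(e_{22}(a))\big)i_G(st)=i_{L(X)}\big(e_{12}(x\alpha_s(a))\big)i_G(st)$, using rule (1). For \eqref{ActionIP} I would apply the adjoint formula of Proposition~\ref{CrossedByActionDiscrete} to the first factor and then rule (2), which deposits $\langle\gamma_{s^{-1}}(x),\gamma_{s^{-1}}(y)\rangle_A=\alpha_{s^{-1}}\big(\langle x,y\rangle_A\big)$ in the $(2,2)$-corner attached to $i_G(s^{-1}t)$.

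The identity requiring the most care is the left action \eqref{ActionLmod}, because the left action of $A$ on $X$ is implemented not through the $(2,2)$-corner but through the homomorphism $\phi:A\to\mathcal L(X)$ into (the multiplier algebra of) the $(1,1)$-corner. Consequently the left action of $i_A(a)i^A_G(s)$ on $X\rtimes_{\gamma,r}G$ is implemented by $(\phi\rtimes G)\big(i_A(a)i^A_G(s)\big)=i_{L(X)}\big(e_{11}(\phi(a))\big)i_G(s)$ sitting in the $(1,1)$-corner, and multiplying this by $i_{L(X)}(e_{12}(x))i_G(t)$ gives $i_{L(X)}\big(e_{12}(\phi(a)\gamma_s(x))\big)i_G(st)=i_X(a\gamma_s(x))i^X_G(st)$, since $e_{11}(k)e_{12}(\xi)=e_{12}(k\xi)$. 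I expect this left-action bookkeeping, together with establishing the corner identification $L(X)\rtimes_{\beta,r}G\cong L(X\rtimes_{\gamma,r}G)$, to be the only real obstacle; once the identification is granted, all three relations and the spanning statement follow mechanically from Proposition~\ref{CrossedByActionDiscrete}, exactly as its own proof followed from the covariance relation in the algebra case.
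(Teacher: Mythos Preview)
Your proposal is correct and follows essentially the same approach as the paper: both reduce to Proposition~\ref{CrossedByActionDiscrete} via the linking-algebra identification $L(X\rtimes_{\gamma,r}G)\cong L(X)\rtimes_{\beta,r}G$ (the paper cites Lemma~3.3 of \cite{enchilada} for this), and then read off \eqref{ActionRmod} and \eqref{ActionIP} from the matrix rules. The only cosmetic difference is that for \eqref{ActionLmod} the paper simply invokes covariance of the left action with respect to $(\gamma,\alpha)$, whereas you spell this out explicitly by passing through the $(1,1)$-corner via $\phi$; your version is more detailed but the underlying content is the same.
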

\begin{proof}
The fact that the $i_X(x)i^X_G(s)$ densely span $X\rtimes_{\gamma,r}G$
follows from the definition of the crossed product. To understand the algebraic
properties, recall from Lemma 3.3 of \cite{enchilada} that $L(X\rtimes_{\gamma,r}
G)\cong L(X)\rtimes_{\nu,r}G$ where $\nu$ is the coaction on $L(X)$ induced
by $(\gamma,\alpha)$. (\ref{ActionRmod}) and (\ref{ActionIP}) 
are then easily deduced
by applying the previous proposition to $L(X)\rtimes_{\nu,r}G$. (\ref{ActionLmod})
follows from the fact that the left action must be covariant with respect
the action.
\end{proof}
\begin{cor}\label{GeneratingSystemCrossed}
The sets 
\begin{align*}
(X\rtimes_{\gamma,r}G)_0&:=\{i_X(x)i_G^X(s):x\in X, s\in G\}\\
(A\rtimes_{\alpha,r}G)_0&:=\{i_A(a)i_G^A(s):a\in A,s\in G\}
\end{align*}
form a generating system for $X\rtimes_{\gamma,r}G$ in the sense of Definition
\ref{GeneratingSystem}.
\end{cor}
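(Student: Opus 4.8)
The plan is to verify the two defining conditions of a generating system (Definition \ref{GeneratingSystem}) directly from the two preceding propositions. Since $X\rtimes_{\gamma,r}G$ is a correspondence over the single algebra $A\rtimes_{\alpha,r}G$, I am checking that the pair $\big((X\rtimes_{\gamma,r}G)_0,(A\rtimes_{\alpha,r}G)_0\big)$ qualifies as a generating system in the collapsed notation $(X^0,A^0)$, so that $A^0=B^0=(A\rtimes_{\alpha,r}G)_0$ and $X^0=(X\rtimes_{\gamma,r}G)_0$.

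First, I would dispatch the density (span) requirements, which are immediate. The equality $\overline{\text{span}}\big((A\rtimes_{\alpha,r}G)_0\big)=A\rtimes_{\alpha,r}G$ is exactly the assertion of Proposition \ref{CrossedByActionDiscrete} that the elements $i_A(a)i_G(s)$ densely span the reduced crossed product. Likewise, $\overline{\text{span}}\big((X\rtimes_{\gamma,r}G)_0\big)=X\rtimes_{\gamma,r}G$ is the corresponding density statement recorded in Proposition \ref{CorrCrossedByActionDiscrete}.

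Second, I would verify that $X^0$ is closed under the left and right actions by $A^0$, using the explicit multiplication formulas already in hand. For the right action, given $i_X(x)i_G^X(s)\in X^0$ and $i_A(a)i_G^A(t)\in A^0$, formula (\ref{ActionRmod}) expresses their product as $i_X\big(x\alpha_s(a)\big)i_G^X(st)$; since $x\alpha_s(a)\in X$ and $st\in G$, this again lies in $X^0$. For the left action, formula (\ref{ActionLmod}) gives $\big(i_A(a)i_G^A(s)\big)\big(i_X(x)i_G^X(t)\big)=i_X\big(a\gamma_s(x)\big)i_G^X(st)$, which likewise lies in $X^0$. These two computations establish both closure conditions and complete the verification.

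Since every requirement reduces to a direct citation of the previous two propositions, there is no substantive obstacle here. The content of the corollary is simply the observation that the generating formulas for the crossed product respect the bookkeeping demanded by Definition \ref{GeneratingSystem}; it is stated separately only because the resulting generating system will be used repeatedly in what follows.
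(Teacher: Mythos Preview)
Your proposal is correct and matches the paper's intent: the corollary is stated without proof because it follows immediately from Propositions \ref{CrossedByActionDiscrete} and \ref{CorrCrossedByActionDiscrete}, and your verification of the density and closure conditions from Definition \ref{GeneratingSystem} via formulas (\ref{ActionRmod}) and (\ref{ActionLmod}) is exactly the intended justification.
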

The simplification of crossed products by coactions of discrete groups come
from the realization that coactions by discrete groups can be viewed as 
gradings. This idea is presented in detail in \cite{Quigg}, but we briefly
summarize the main points in the next two propositions. We refer the reader
to \cite{Quigg} for the proofs.
\begin{prop}\label{CoactionGradingAlgebra}
Let $\delta:A\to M\big(A\otimes C^*(G)\big)$ be a coaction of a discrete
group $G$ on a $C^*$-algebra $A$. Then $A=\overline{\emph{span}}\{A_s\}_{
s\in G}$ where $A_s=\{a\in A:\delta(a)=a\otimes u_s\}$. Furthermore,
\begin{align}
A_s\cdot A_t&\subseteq A_{st}\\
A_s^*&=A_{s^{-1}}
\end{align}
\end{prop}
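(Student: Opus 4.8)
The plan is to dispatch the two displayed inclusions as routine consequences of $\delta$ being a $*$-homomorphism, and to reserve the real work for the density statement $A=\overline{\mathrm{span}}\{A_s\}_{s\in G}$. For the algebraic relations: if $a\in A_s$ and $b\in A_t$ then, since $\delta$ is multiplicative and $u_su_t=u_{st}$,
\[
\delta(ab)=\delta(a)\delta(b)=(a\otimes u_s)(b\otimes u_t)=ab\otimes u_{st},
\]
so $ab\in A_{st}$, giving $A_sA_t\subseteq A_{st}$. Likewise, since $\delta$ is $*$-preserving and $u_s^*=u_{s^{-1}}$,
\[
\delta(a^*)=\delta(a)^*=(a\otimes u_s)^*=a^*\otimes u_{s^{-1}},
\]
so $A_s^*\subseteq A_{s^{-1}}$; applying this with $s^{-1}$ in place of $s$ and taking adjoints gives the reverse inclusion, hence $A_s^*=A_{s^{-1}}$.

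For the spanning statement I would introduce spectral projections built from slice maps. Let $\tau$ be the canonical trace on $C^*(G)$ determined by $\tau(u_t)=\delta_{t,e}$, and for each $s\in G$ define the bounded functional $\omega_s:=\tau(u_s^*\,\cdot\,)$, so that $\omega_s(u_t)=\delta_{s,t}$. Set $E_s:=(\mathrm{id}\otimes\omega_s)\circ\delta\colon A\to A$, a bounded linear map. The key computation is that $E_s(A)\subseteq A_s$: applying $\delta$ and commuting it past the slice on the independent $C^*(G)$ leg gives $\delta\circ E_s=(\mathrm{id}\otimes\mathrm{id}\otimes\omega_s)\circ(\delta\otimes\mathrm{id})\circ\delta$, and the coaction identity $(\delta\otimes\mathrm{id})\delta=(\mathrm{id}\otimes\delta_G)\delta$ together with $\delta_G(u_t)=u_t\otimes u_t$ collapses the inner slice to the map $u_t\mapsto\delta_{s,t}\,u_t$, whence $\delta(E_s(a))=E_s(a)\otimes u_s$, i.e. $E_s(a)\in A_s$.

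Finally, for density itself, given $a\in A$ I would approximate $\delta(a)$ by a finite sum $\sum_{s\in F}b_s\otimes u_s$, using nondegeneracy of the coaction and $C^*(G)=\overline{\mathrm{span}}\{u_s\}$; slicing by $\omega_t$ then identifies $b_t$ with $E_t(a)\in A_t$. Since each $E_s(a)\in A_s$ we have $\delta\big(\sum_{s\in F}E_s(a)\big)=\sum_{s\in F}E_s(a)\otimes u_s\approx\delta(a)$, and because a coaction is an injective, hence isometric, $*$-homomorphism we may transport this back to conclude $\sum_{s\in F}E_s(a)\approx a$. Thus $a\in\overline{\mathrm{span}}\bigcup_s A_s$.

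I expect the genuine obstacle to be this last step. Making the approximation of $\delta(a)$ precise requires care with the multiplier algebra $M(A\otimes C^*(G))$ and with the exact form of nondegeneracy, and verifying that the slice maps land in $A_s$ at the multiplier level — rather than merely on an algebraic tensor product — is where the substance lies. These are exactly the points established in \cite{Quigg}, which is why I would ultimately defer to that reference for the full argument.
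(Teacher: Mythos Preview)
Your proposal is correct, and in fact the paper does not give its own proof of this proposition at all: it states the result and refers the reader to \cite{Quigg} for the argument. Your outline---dispatching the graded-algebra relations directly from multiplicativity and $*$-preservation of $\delta$, then building spectral projections $E_s=(\mathrm{id}\otimes\omega_s)\circ\delta$ from the canonical trace on $C^*(G)$ and using them to approximate $a$ by homogeneous pieces---is exactly the standard approach found in Quigg's paper, so you are aligned with the source the paper cites. Your candid acknowledgment that the multiplier-algebra and nondegeneracy details are where the real work lies, and your deferral to \cite{Quigg} for those, matches precisely what the paper itself does.
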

\begin{prop}\label{AlgebraCrossedByCoaction}
Let $\delta$ be a coaction of a discrete group $G$ on a $C^*$-algebra
$A$ and let $(j_A,j_G)$ be the canonical representations of $A$ and
$c_0(G)$ in the crossed product $A\rtimes_\delta G$. Then $A\rtimes_
\delta G$ is densely spanned by elements of the form $j_A(a_s)j_G(f)$
where $a_s\in A_s$, $f\in c_0(G)$ and 
These satisfy the following relations:
\begin{align}
\big(j_A(a)j_G(f)\big)\big(j_A(a_{s})j_G(g)\big)
&=j_A(aa_s)j_G\big(\lambda_{s^{-1}}(f)g\big)\\
\big(j_A(a_s)j_G(f)\big)^*&=j_A(a_s^*)j_G\big(\lambda_s(\overline{f})\big)
\end{align}
where $\lambda_s$ denotes left translation by $s$ on $c_0(G)$.
\end{prop}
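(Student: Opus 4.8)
The plan is to reduce both displayed identities to a single commutation rule describing how $j_G$ and $j_A$ interact on \emph{homogeneous} elements, and then obtain the two formulas by purely formal manipulation. Since $(j_A,j_G)$ is the canonical (nondegenerate) covariant representation of the cosystem $(A,G,\delta)$, by definition $A\rtimes_\delta G$ is the closed linear span of the products $j_A(a)j_G(f)$ with $a\in A$ and $f\in c_0(G)$. By Proposition \ref{CoactionGradingAlgebra} we have $A=\overline{\text{span}}\{A_s\}_{s\in G}$, so the left factor may be taken homogeneous, $a=a_s\in A_s$; this already gives the asserted dense spanning by elements $j_A(a_s)j_G(f)$.

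The heart of the matter is the commutation relation
\begin{equation*}
j_G(f)\,j_A(a_s)=j_A(a_s)\,j_G\big(\lambda_{s^{-1}}(f)\big)\qquad(a_s\in A_s,\ f\in c_0(G)). \tag{$\star$}
\end{equation*}
I would establish $(\star)$ from the concrete regular realization of the crossed product inside $M\big(A\otimes\mathcal K(\ell^2 G)\big)$, in which $j_G(f)=1\otimes M_f$ acts by multiplication and, because $\delta(a_s)=a_s\otimes u_s$ for $a_s\in A_s$, the representation $j_A$ sends $a_s$ to $a_s\otimes\lambda_s$, where $\lambda$ is the left regular representation. In this picture $(\star)$ becomes the operator identity $M_f\lambda_s=\lambda_s M_{\lambda_{s^{-1}}(f)}$, which follows from the elementary computation $\lambda_s M_g\lambda_s^*=M_{\lambda_s(g)}$ together with $\lambda_s\lambda_{s^{-1}}=1$. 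Equivalently, $(\star)$ is precisely the covariance condition for the cosystem unwound in the discrete case, as treated via gradings in \cite{Quigg}.

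Granting $(\star)$, both displayed formulas are immediate. For the product I move $j_G(f)$ to the right across $j_A(a_s)$ using $(\star)$ and then use that $j_A$ and $j_G$ are homomorphisms:
\[
\big(j_A(a)j_G(f)\big)\big(j_A(a_s)j_G(g)\big)
=j_A(a)\,j_A(a_s)\,j_G\big(\lambda_{s^{-1}}(f)\big)j_G(g)
=j_A(aa_s)\,j_G\big(\lambda_{s^{-1}}(f)g\big).
\]
For the adjoint I use that $j_A$ is a $*$-homomorphism and $j_G(f)^*=j_G(\overline f)$, together with $A_s^*=A_{s^{-1}}$ from Proposition \ref{CoactionGradingAlgebra}, so that $a_s^*\in A_{s^{-1}}$ and $(\star)$ applies with $s$ replaced by $s^{-1}$:
\[
\big(j_A(a_s)j_G(f)\big)^*=j_G(\overline f)\,j_A(a_s^*)
=j_A(a_s^*)\,j_G\big(\lambda_{s}(\overline f)\big),
\]
since $\lambda_{(s^{-1})^{-1}}=\lambda_s$.

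I expect the only genuine obstacle to be bookkeeping: pinning down the exact form of the canonical covariant representation (full versus reduced $C^*(G)$, and the precise covariance condition one starts from) and tracking the variance conventions so that left translation and the inverse $s^{-1}$ land in the correct places. Once $(\star)$ is correctly calibrated against the chosen definition of $A\rtimes_\delta G$, the remainder is formal; for this reason I would, as the author does, ultimately defer the verification of $(\star)$ to \cite{Quigg}, where the identification of discrete coactions with gradings makes the covariance relation transparent.
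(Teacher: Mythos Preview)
Your proposal is correct and aligns with the paper's treatment: the paper does not prove this proposition at all but explicitly refers the reader to \cite{Quigg}, and your sketch---reducing everything to the commutation relation $(\star)$ coming from the grading/covariance and then deferring its verification to \cite{Quigg}---is exactly the content one would find there. You have in fact supplied more detail than the paper, but the underlying approach is the same.
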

Applying these propositions to linking algebras helps us to understand
coactions on correspondences:
\begin{prop}\label{CorrGrading}
Let $(\sigma,\delta)$ be a coaction of a discrete group $G$ on
a Hilbert module $(X,A)$. Then $X=\overline{\emph{span}}\{X_s\}_{s\in G}$
where $X_s=\{x\in X:\sigma(x)=x\otimes u_s\}$. Further:
\begin{align}
X_s\cdot A_t&\subseteq X_{st}\label{gradingRmod}\\
\langle x_s,x_t\rangle_A&\in A_{s^{-1}t}\label{gradingIP}\\
A_s\cdot X_t&\subseteq X_{st}\label{gradingLmod}
\end{align}
\end{prop}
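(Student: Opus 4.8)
The plan is to reduce the statement about the coaction $(\sigma,\delta)$ on the Hilbert module $(X,A)$ to the already-understood situation of a coaction on a $C^*$-algebra, namely the linking algebra $L(X)$. Recall that $L(X)=\mathcal K(X\oplus A)$ carries the block decomposition displayed above, and that the coaction $(\sigma,\delta)$ on $(X,A)$ induces a genuine coaction $\nu$ of $G$ on $L(X)$ (this is the same $\nu$ invoked in the proof of Proposition \ref{CorrCrossedByActionDiscrete}). First I would apply Proposition \ref{CoactionGradingAlgebra} to $\bigl(L(X),\nu\bigr)$ to obtain the grading $L(X)=\overline{\operatorname{span}}\{L(X)_s\}_{s\in G}$, where $L(X)_s=\{m\in L(X):\nu(m)=m\otimes u_s\}$, together with the relations $L(X)_sL(X)_t\subseteq L(X)_{st}$ and $L(X)_s^*=L(X)_{s^{-1}}$.

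The key step is to check that the block decomposition of $L(X)$ is compatible with this grading, i.e. that $\nu$ respects the corner projections $p$ and $q$, so that each graded piece splits as
\begin{equation*}
L(X)_s=\begin{bmatrix}\mathcal K(X)_s & X_s\\ \overline{X}_{s} & A_s\end{bmatrix}.
\end{equation*}
Concretely, because $\nu$ is induced by $(\sigma,\delta)$, it fixes $p$ and $q$ (these correspond to the units of $\mathcal K(X)$ and $A$, which are preserved), and it acts block-entrywise as $\sigma$ on the $pL(X)q\cong X$ corner and as $\delta$ on the $qL(X)q\cong A$ corner. Hence the $X$-entry of $L(X)_s$ is exactly $\{x\in X:\sigma(x)=x\otimes u_s\}=X_s$, and likewise the $A$-entry is $A_s$. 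Taking the $X$-corner of the identity $L(X)=\overline{\operatorname{span}}\{L(X)_s\}$ then yields $X=\overline{\operatorname{span}}\{X_s\}_{s\in G}$.

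Finally, the three module-theoretic inclusions (\ref{gradingRmod})--(\ref{gradingLmod}) fall out of the grading relations on $L(X)$ combined with the multiplication rules (1)--(3) in the block matrix. For (\ref{gradingRmod}), an element $x_s\in X_s$ sits in the $pL(X)q$ corner of $L(X)_s$ and $a_t\in A_t$ in the $qL(X)q$ corner of $L(X)_t$; their product lands in $L(X)_sL(X)_t\subseteq L(X)_{st}$, and by the block multiplication it equals $x_s a_t$ in the $pL(X)q$-corner, whence $x_sa_t\in X_{st}$. Relation (\ref{gradingIP}) follows analogously, using $L(X)_s^*L(X)_t\subseteq L(X)_{s^{-1}t}$ together with the identity $\langle x_s,x_t\rangle_A$ being the $qL(X)q$-entry of the product of the adjoint of the $x_s$-block with the $x_t$-block, and (\ref{gradingLmod}) is symmetric to (\ref{gradingRmod}). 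The only place where I expect to need care is the verification that $\nu$ genuinely restricts to $\sigma$ and $\delta$ on the respective corners and fixes $p,q$; once that compatibility is pinned down, everything else is a formal transfer of the scalar-algebra grading along the corners of the linking algebra.
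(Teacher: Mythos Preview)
Your approach is essentially the same as the paper's: pass to the induced coaction on the linking algebra $L(X)$, apply Proposition~\ref{CoactionGradingAlgebra} there, use that $p$ and $q$ are fixed to split each $L(X)_s$ into corners, and read off the module grading and the relations (\ref{gradingRmod}) and (\ref{gradingIP}) from the block multiplication together with $L(X)_sL(X)_t\subseteq L(X)_{st}$ and $L(X)_s^*=L(X)_{s^{-1}}$.

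There is one slip: your claim that (\ref{gradingLmod}) is ``symmetric to (\ref{gradingRmod})'' via the linking algebra does not go through. In the block picture, $a_s\in A_s$ lives in the $qL(X)q$-corner and $x_t\in X_t$ in the $pL(X)q$-corner, and their product in $L(X)$ is zero since $qp=0$; the left action $\phi:A\to\mathcal L(X)$ is not encoded in the multiplication of $L(X)$ (it lands in $\mathcal L(X)$, not $\mathcal K(X)$, in general). The paper handles (\ref{gradingLmod}) separately, directly from the covariance of the left action: for $a\in A_s$ and $x\in X_t$,
\[
\sigma(a\cdot x)=\delta(a)\cdot\sigma(x)=(a\otimes u_s)(x\otimes u_t)=ax\otimes u_{st},
\]
so $ax\in X_{st}$. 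Replace your ``symmetric'' remark with this one-line computation and the argument is complete.
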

\begin{proof}
Let $\varepsilon$ be the induced coaction on $L(X)$. Then we have a grading
$L(X)=\overline{\text{span}}\{L(X)_s\}_{s\in G}$. Since $p$ and $q$ are 
in fixed points of the coaction, if $z\in L(X)_s$ then $qzq\in L(X)_s$ 
and $pzq\in L(X)_s$. Recall that the restriction of $\varepsilon$ to 
$qL(X)q=
\begin{bmatrix}
0 & 0\\
0 & A
\end{bmatrix}
\cong A$ is $\delta$. Thus if $a$ is the element of $A$ corresponding
to $qzq$ then $\varepsilon(qzq)=(qzq)\otimes u_s$ if and only if
$\delta(a)=a\otimes u_s$. Thus $qL(X)_sq=
\begin{bmatrix}
0 & 0\\
0 & A_s
\end{bmatrix}\cong A_s$. Similar reasoning shows that $pL(X)_sq=
\begin{bmatrix}
0 & X_s\\
0 & 0
\end{bmatrix}\cong X_s$. (\ref{gradingRmod}) and (\ref{gradingIP}) 
then follow from multiplication in $L(X)$ together with the grading of $L(X)$.
(\ref{gradingLmod}) follows from the fact that the left module action is
covariant with respect to the coaction.
\end{proof}
Lemma 3.4 of \cite{enchilada} shows that $L(X\rtimes_\sigma G)\cong L(X)\rtimes_
\varepsilon G$. This fact, together with the preceding propositions, gives
us the following characterization of $X\rtimes_\sigma G$ in the case where
$G$ is discrete:
\begin{prop}\label{CorrCrossedByCoaction}
Let $(\sigma,\delta)$ be a coaction of a discrete group $G$ on a correspondence
$(X,A)$. Let $(j_X,j_A,j_G^X,j_G^A)$ be the canonical representation
of the system in the correspondence $(X\rtimes_\sigma G,A\rtimes_\delta G)$.
Then $X\rtimes_\sigma G$ is densely spanned by elements of the form 
$j_X(x_s)j_G^X(f)$ where $x_s\in X_s$ and $f\in c_0(G)$.
These elements satisfy the following relations:
\begin{align}
\big(j_X(x)j_G^X(f)\big)\big(j_A(a_s)j_G^A(g)\big)
&=j_X(xa_s)j_G^X(\lambda_{s^{-1}}(f)g)\\%\label{CoactionRmod}
\big\langle j_X(x_s)j_G^X(f),j_X(x_t)j_G^X(g)\big\rangle
_{A\rtimes_\delta G}&=j_A\big(\langle
x_s,x_t\rangle_A\big)j_G^A\big(\lambda_{t^{-1}s}(f)g\big)\label{CoactionIP}\\
\big(j_A(a)j_G^A(f)\big)\big(j_X(x_s)j_G^X(g)\big)&=j_X(ax_s)j_G^X\big(\lambda
_{s^{-1}}(f)g\big)\label{CoactionLmod}
\end{align}
\end{prop}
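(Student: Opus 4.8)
The plan is to mirror the proof of Proposition~\ref{CorrCrossedByActionDiscrete}, pushing everything through the linking algebra. By Lemma~3.4 of \cite{enchilada} we have $L(X\rtimes_\sigma G)\cong L(X)\rtimes_\varepsilon G$, where $\varepsilon$ is the coaction on $L(X)$ induced by $(\sigma,\delta)$. As in the proof of Proposition~\ref{CorrGrading}, the complementary projections $p,q\in M(L(X))$ are fixed by $\varepsilon$, so they extend to projections in $M\big(L(X)\rtimes_\varepsilon G\big)$ that cut out the corners $q\big(L(X)\rtimes_\varepsilon G\big)q\cong A\rtimes_\delta G$ and $p\big(L(X)\rtimes_\varepsilon G\big)q\cong X\rtimes_\sigma G$. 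The strategy is then to apply Proposition~\ref{AlgebraCrossedByCoaction} to $L(X)\rtimes_\varepsilon G$ and to read off each identity by restricting to the appropriate corner, using the graded identifications $qL(X)_sq\cong A_s$ and $pL(X)_sq\cong X_s$ from Proposition~\ref{CorrGrading}.

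First I would establish the dense spanning: Proposition~\ref{AlgebraCrossedByCoaction} gives that $L(X)\rtimes_\varepsilon G$ is densely spanned by the $j_{L(X)}(z_s)j_G(f)$ with $z_s\in L(X)_s$, and cutting by $p(\,\cdot\,)q$ together with $pL(X)_sq\cong X_s$ shows that $X\rtimes_\sigma G$ is densely spanned by the $j_X(x_s)j_G^X(f)$ with $x_s\in X_s$. For the right-module relation, I would regard $j_X(x)j_G^X(f)$ and $j_A(a_s)j_G^A(g)$ as the images of elements of the $pq$- and $qq$-corners of $L(X)\rtimes_\varepsilon G$, multiply using the multiplication rule of Proposition~\ref{AlgebraCrossedByCoaction}, and then evaluate the product of the underlying linking-algebra elements, which by the multiplicative structure of $L(X)$ is the element $xa_s$ of the $pq$-corner; the result lies in $X\rtimes_\sigma G$ and gives the stated formula. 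The inner product~(\ref{CoactionIP}) is obtained by computing $\big(j_X(x_s)j_G^X(f)\big)^*\big(j_X(x_t)j_G^X(g)\big)$ in $L(X)\rtimes_\varepsilon G$: I would apply the adjoint formula of Proposition~\ref{AlgebraCrossedByCoaction}, then its multiplication rule, and finally the linking-algebra identity $x_s^*x_t=\langle x_s,x_t\rangle_A$, which lands the product in the corner $q\big(L(X)\rtimes_\varepsilon G\big)q\cong A\rtimes_\delta G$. The only genuine bookkeeping here is tracking the left translations $\lambda$ and the conjugation from the adjoint so that the $c_0(G)$-coefficient collapses to $\lambda_{t^{-1}s}(f)g$.

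The hard part will be the left-module relation~(\ref{CoactionLmod}), since it cannot be read off as a simple corner product: because $qp=0$, the product of a $qq$-corner element with a $pq$-corner element vanishes and does not see the left action at all. Indeed, the left action of $A$ on $X$ is implemented not by the $A$-corner $qL(X)q$ but by $\phi(a)$ in the $pp$-corner $\mathcal K(X)\cong pL(X)p$, that is, by the multiplier of $L(X)$ whose corners are $\phi(a)$ and $a$. I would therefore deduce~(\ref{CoactionLmod}) as~(\ref{ActionLmod}) was deduced in Proposition~\ref{CorrCrossedByActionDiscrete}: from the requirement that the left module action be covariant with respect to the coaction, which at the graded level is precisely the containment $A_s\cdot X_t\subseteq X_{st}$ of~(\ref{gradingLmod}). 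Tracking this covariance through the crossed product, together with the multiplication rule of Proposition~\ref{AlgebraCrossedByCoaction}, produces the element $ax_s$ and the coefficient $\lambda_{s^{-1}}(f)g$, yielding~(\ref{CoactionLmod}).
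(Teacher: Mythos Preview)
Your proposal is correct and matches the paper's own argument: the paper does not write out a separate proof for this proposition but states just before it that the result follows from $L(X\rtimes_\sigma G)\cong L(X)\rtimes_\varepsilon G$ (Lemma~3.4 of \cite{enchilada}) together with the preceding propositions, exactly as you outline. Your handling of the left-module relation via covariance rather than as a corner product also mirrors how the paper deduces the analogous relation~(\ref{ActionLmod}) in Proposition~\ref{CorrCrossedByActionDiscrete}.
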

\begin{cor}\label{GeneratingSetCoCrossed}
The sets 
\begin{align*}
(X\rtimes_\sigma G)_0&:=\{j_X(x_s)j_G^X(f):x_s\in X_s,f\in c_0(G)\}\\
(A\rtimes_\delta G)_0&:=\{j_A(a_s)j_G^A(f):a_s\in A_s,g\in c_0(G)\}
\end{align*}
form a generating system for $X\rtimes_\sigma G$.
\end{cor}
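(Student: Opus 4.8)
The plan is to verify the three requirements of Definition \ref{GeneratingSystem} directly, taking the correspondence to be $(X\rtimes_\sigma G, A\rtimes_\delta G)$ over $A\rtimes_\delta G$ and setting $A^0 = B^0 = (A\rtimes_\delta G)_0$ and $X^0 = (X\rtimes_\sigma G)_0$. The two density requirements are immediate from the preceding work: Proposition \ref{AlgebraCrossedByCoaction} asserts precisely that $A\rtimes_\delta G$ is the closed span of the elements $j_A(a_s)j_G^A(f)$ with $a_s\in A_s$ and $f\in c_0(G)$, i.e. $\overline{\text{span}}(A\rtimes_\delta G)_0 = A\rtimes_\delta G$, while Proposition \ref{CorrCrossedByCoaction} gives $\overline{\text{span}}(X\rtimes_\sigma G)_0 = X\rtimes_\sigma G$ in the same way.

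It remains to check that $(X\rtimes_\sigma G)_0$ is closed under left and right multiplication by $(A\rtimes_\delta G)_0$. For the right action I would take a generator $j_X(x_r)j_G^X(f)$ with $x_r\in X_r$ and a generator $j_A(a_t)j_G^A(g)$ with $a_t\in A_t$, and apply the right-module relation of Proposition \ref{CorrCrossedByCoaction} to write their product as $j_X(x_ra_t)j_G^X\big(\lambda_{t^{-1}}(f)g\big)$. Here $\lambda_{t^{-1}}(f)g$ again lies in $c_0(G)$, and by the grading relation (\ref{gradingRmod}) of Proposition \ref{CorrGrading} the vector $x_ra_t$ lies in the homogeneous summand $X_{rt}$; hence the product is once more of the form $j_X(x_{rt})j_G^X(h)$ with $x_{rt}\in X_{rt}$ and $h\in c_0(G)$, so it belongs to $(X\rtimes_\sigma G)_0$. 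The left action is handled symmetrically: applying relation (\ref{CoactionLmod}) to $\big(j_A(a_t)j_G^A(f)\big)\big(j_X(x_s)j_G^X(g)\big)$ yields $j_X(a_tx_s)j_G^X\big(\lambda_{s^{-1}}(f)g\big)$, and the grading relation (\ref{gradingLmod}) gives $a_tx_s\in X_{ts}$, so the result is again a generator.

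There is no real obstacle in this argument; the corollary is essentially a repackaging, in the language of generating systems, of the multiplication formulas already established in Propositions \ref{CorrCrossedByCoaction} and \ref{CorrGrading}. The only point where the earlier structural work is genuinely used is in confirming that the $X$-valued first component of a product remains homogeneous — this is exactly the content of the grading inclusions (\ref{gradingRmod}) and (\ref{gradingLmod}), and without them the products, though still expressible via the multiplication formulas, would not manifestly have the prescribed generator form. Accordingly, I would keep the proof short, simply citing the two propositions for density and the relations of Proposition \ref{CorrCrossedByCoaction} together with Proposition \ref{CorrGrading} for closure.
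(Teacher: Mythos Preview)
Your argument is correct and is precisely the intended one: the paper states this corollary without proof, treating it as an immediate consequence of Propositions~\ref{AlgebraCrossedByCoaction}, \ref{CorrGrading}, and \ref{CorrCrossedByCoaction}, which is exactly what you have unpacked. There is nothing to add.
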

\subsection{Quantum Groups}
There are many different notions of a quantum group. Our quantum groups
will be quantum groups generated by modular multiplicative unitaries.
We will briefly recall some of the basic facts about
these quantum groups and refer the reader to \cite{Timmermann}
for a more in depth overview of the subject.

\begin{dfn}
Given a separable Hilbert space $\mathcal H$, a \emph{multiplicative unitary}
$\mathbb W$ is a unitary operator on $\mathcal H\otimes\mathcal H$ such that
\begin{align}
\mathbb W_{23}\mathbb W_{12}=\mathbb W_{12}\mathbb W_{13}\mathbb W_{23}
\in\mathcal U(\mathcal H\otimes\mathcal H\otimes\mathcal H)
\label{Pentagon} 
\end{align}
where $\mathbb W_{ij}$ indicates that we are applying $\mathbb W$ to
the $i^{th}$ and $j^{th}$ factors of $\mathcal H$ and leaving the other 
fixed. Equation \ref{Pentagon} is sometimes referred to as the 
\emph{pentagon equation}.

$\mathbb W$ is called \emph{modular} if there exist (possibly unbounded)
operators $\widehat Q$ and $Q$ on $\mathcal H$ and a unitary $\widetilde
{\mathbb W}\in\mathcal U(\overline{\mathcal H}\otimes\mathcal H)$
(where $\overline{\mathcal H}$ is the dual space of $\mathcal H$) such that
\begin{enumerate}
\item
$\widehat Q$ and $Q$ are positive and self-adjoint with trivial kernels
\item
$\mathbb W^*(\widehat Q\otimes Q)\mathbb W=\widehat Q\otimes Q$
\item
$\langle \eta'\otimes\xi',\mathbb W(\eta\otimes\xi)\rangle=
\langle\overline\eta\otimes Q\xi',\widetilde{\mathbb W}(\overline
{\eta'}\otimes Q^{-1}\xi)\rangle$ for all $\xi\in\text{Dom}(Q^{-1})$,
$\xi'\in\text{Dom}(Q)$, and $\eta,\eta'\in\mathcal H$
\end{enumerate}
\end{dfn}

\begin{ex}[Example 7.1.4 of \cite{Timmermann}]\label{GroupMU}
Let $G$ be a locally compact group. We can identify $L^2(G)\otimes
L^2(G)$ with $L^2(G\times G)$ and define
$\mathbb W_G\in\mathcal B\big(L^2(G)\otimes L^2(G)\big)$ by
$$(\mathbb W_G\zeta)(s,t):=\zeta(s,s^{-1}t)$$
Then $\mathbb W_G$ is a multiplicative unitary.
\end{ex}
\begin{thm}[Theorem 2.7 of \cite{Woronowicz}]\label{QGconstruction}
For a modular multiplicative unitary $\mathbb W\in\mathcal U(
\mathcal H\otimes\mathcal H)$, set
\begin{align*}
S&:=\overline{\emph{span}}\{(\omega\otimes\emph{id}_\mathcal H)\mathbb W:\omega
\in\mathcal B(\mathcal H)_*\}\\
\widehat S&:=\overline{\emph{span}}\{(\emph{id}_\mathcal H\otimes\omega)
\mathbb W:\omega\in\mathcal B(\mathcal H)_*\}
\end{align*}
Then:
\begin{itemize}
\item $S$ and $\widehat S$ are separable, nondegenerate $C^*$-subalgebras
of $\mathcal B(\mathcal H)$.
\item $\mathbb W\in\mathcal U(\widehat S\otimes S)\subseteq\mathcal U(\mathcal
H\otimes\mathcal H)$. When we wish to view $\mathbb W$ as a unitary multiplier
of $\widehat S\otimes S$ we will denote it by $W^S$ and refer to it as the
\emph{reduced bicharacter} of $S$.
\item
There are unique homomorphisms $\Delta_S,\widehat\Delta_S:S\to S\otimes S$
such that
\begin{align*}
(\emph{id}_{\widehat S} \otimes \Delta_S)W^S&=W^S_{12}W^S_{13}\in\mathcal
U(\widehat
S\otimes S\otimes S)\\
(\widehat\Delta_S\otimes\emph{id}_S)W^S&=W^S_{23}W^S_{13}\in\mathcal U(\widehat
S\otimes \widehat S\otimes S)
\end{align*}
\item $(S,\Delta_S)$ and $(\widehat S,\widehat\Delta_S)$ are $C^*$-bialgebras
\end{itemize}
\end{thm}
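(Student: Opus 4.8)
The plan is to follow the standard theory of multiplicative unitaries, splitting the work into the part that uses only the pentagon equation (due to Baaj--Skandalis) and the part that genuinely requires the modularity data $(Q,\widehat Q,\widetilde{\mathbb W})$ (due to Woronowicz). I would address the four assertions in the order: closure of the slice spaces under multiplication, the membership $\mathbb W\in\mathcal U(\widehat S\otimes S)$, the construction of $\Delta_S$ and $\widehat\Delta_S$, and finally the bialgebra axioms.

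For multiplicativity, write $a_i=(\omega_i\otimes\mathrm{id})\mathbb W$ and, by tracking leg positions in $\mathcal H\otimes\mathcal H\otimes\mathcal H$, observe
\[
a_1a_2=(\omega_1\otimes\omega_2\otimes\mathrm{id})(\mathbb W_{13}\mathbb W_{23}).
\]
Rearranging the pentagon equation (\ref{Pentagon}) into $\mathbb W_{13}\mathbb W_{23}=\mathbb W_{12}^*\mathbb W_{23}\mathbb W_{12}$ and slicing the first two legs yields
\[
a_1a_2=(\mu\otimes\mathrm{id})\mathbb W,\qquad \mu(x)=(\omega_1\otimes\omega_2)\big(\mathbb W^*(1\otimes x)\mathbb W\big),
\]
so $\mathrm{span}\{(\omega\otimes\mathrm{id})\mathbb W\}$ is an algebra and its closure $S$ is a Banach algebra; running the same computation with the two legs of $\mathbb W$ interchanged handles $\widehat S$. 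Separability follows from separability of $\mathcal H$, and nondegeneracy from the unitarity of $\mathbb W$ together with the density of the slices.

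The hard part --- and the step I expect to be the main obstacle --- is showing that $S$ and $\widehat S$ are closed under the adjoint, so that they become $C^*$-subalgebras rather than merely closed subalgebras. The adjoint of a slice of $\mathbb W$ is a slice of $\mathbb W^*$, so the task is to show the slices of $\mathbb W^*$ lie in $\overline{\mathrm{span}}$ of the slices of $\mathbb W$; this cannot be extracted from the pentagon equation alone, which is precisely why a regularity or manageability hypothesis is imposed. Here condition (3), which ties $\mathbb W$ to $\widetilde{\mathbb W}\in\mathcal U(\overline{\mathcal H}\otimes\mathcal H)$ via the operators $Q$ and $\widehat Q$, is exactly what manufactures the (densely defined, generally unbounded) antipode and its polar decomposition into a unitary antipode and a scaling group; self-adjointness of the slice spaces then follows. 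Once $S$ and $\widehat S$ are nondegenerate $C^*$-subalgebras of $\mathcal B(\mathcal H)$, checking on slices with the same pentagon computation shows that $\mathbb W$ multiplies $\widehat S\otimes S$ into itself, giving $\mathbb W\in\mathcal U(\widehat S\otimes S)$, which we then denote $W^S$.

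Finally, I would define $\Delta_S(a)=\mathbb W(a\otimes 1)\mathbb W^*$ and verify, using the density properties just established, that it is a nondegenerate homomorphism into $M(S\otimes S)$ (the density condition $\overline{\Delta_S(S)(1\otimes S)}=S\otimes S$ pins down the codomain). The defining identity is then a direct transcription of the pentagon: since $\Delta_S$ is implemented by conjugation by $\mathbb W_{23}$,
\[
(\mathrm{id}\otimes\Delta_S)W^S=\mathbb W_{23}\mathbb W_{12}\mathbb W_{23}^*=\mathbb W_{12}\mathbb W_{13}=W^S_{12}W^S_{13},
\]
and coassociativity of $\Delta_S$ reduces to two applications of the pentagon equation. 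Uniqueness is immediate because the slices of $W^S$ are dense, so the identity determines $\Delta_S$ on a dense set. Constructing $\widehat\Delta_S$ symmetrically from the opposite leg of $\mathbb W$ gives the second identity, and together the homomorphism property and coassociativity show that $(S,\Delta_S)$ and $(\widehat S,\widehat\Delta_S)$ are $C^*$-bialgebras.
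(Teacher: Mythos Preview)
Your outline is a faithful sketch of the standard Baaj--Skandalis/Woronowicz argument and contains no serious gaps as a high-level plan. However, there is nothing to compare it against here: the paper does not prove this theorem at all. The statement is quoted verbatim as Theorem~2.7 of \cite{Woronowicz} (Meyer--Roy--Woronowicz), with no proof or even proof sketch supplied; the author simply cites it as background and immediately moves on to Definition~\ref{QGdefinition}. So your proposal is not an alternative to the paper's proof --- it is a proof where the paper deliberately gives none, deferring entirely to the reference.

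If the intent is to compare with the proof in \cite{Woronowicz} itself (or its antecedents in Woronowicz's manageability paper and Baaj--Skandalis), then your route is essentially the canonical one: multiplicativity of the slice spaces from the pentagon equation, self-adjointness from the modularity data via the unitary antipode, and the comultiplication by conjugation $\Delta_S(a)=\mathbb W(a\otimes 1)\mathbb W^*$. One small point worth tightening: you correctly flag that closure under adjoints is the crux, but the actual mechanism in the modular setting is that condition~(3) lets you express $\big((\omega\otimes\mathrm{id})\mathbb W\big)^*$ as a slice of $\mathbb W$ against a functional built from $Q$, $\widehat Q$ and $\widetilde{\mathbb W}$, for $\omega$ ranging over a dense subspace of normal functionals; making that explicit is the only place your sketch would need real work to become a proof.
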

\begin{dfn}
\label{QGdefinition}A \emph{quantum group} is a $C^*$-bialgebra $\mathbb
G:=(S,\Delta_S)$
arising from a modular multiplicative unitary as in Theorem
\ref{QGconstruction}. The \emph{dual} 
$\widehat{\mathbb G}$ of $\mathbb G$ is the bialgebra
$(\widehat S,\widehat\Delta_S)$. 
$\widehat{\mathbb G}$ is generated by the modular multiplicative 
unitary $\widehat{\mathbb W}:=\Sigma\mathbb W^*\Sigma$ (where
$\Sigma$ is the flip isomorphism: $x\otimes y\mapsto y\otimes x$).
$\widehat{\mathbb W}$ is called the \emph{dual} of $\mathbb W$.
We will sometimes write $S(\mathbb W)$ and $\widehat S(\mathbb W)$
for $S$ and $\widehat S$ if we wish to call attention to the multiplicative
unitary which generated $S$ or $\widehat S$.
\end{dfn}
\begin{ex}[Example 7.2.13 of \cite{Timmermann}]\label{QGfromGroup}
Let $G$ be a locally compact group and let $\mathbb W_G$
be the multiplicative unitary described in Example \ref{GroupMU}. Then
$S(\mathbb W_G)\cong C_r^*(G)$ and $\widehat S(\mathbb W_G)\cong C_0(G)$
as $C^*$-bialgebras. We denote the reduced bicharacter associated to $\mathbb
W_G$ by $W_G$.
\end{ex}
\begin{dfn}
Given two quantum groups $\mathbb G=(S,\Delta_S)$ and $\mathbb H
=(T,\Delta_T)$, we define a \emph{bicharacter from $\mathbb G$ to
$\widehat{\mathbb H}$} to be a unitary $\chi\in\mathcal U(\widehat
S\otimes\widehat T)$ such that 
\begin{align}
(\widehat\Delta_S\otimes\text{id}_{\widehat T})\chi&=\chi_{23}\chi_{13}\in\mathcal
U(\widehat S\otimes\widehat S\otimes \widehat T)\\
(\text{id}_{\widehat S}\otimes\widehat\Delta_T)\chi&=\chi_{13}\chi_{13}\in\mathcal
U(\widehat S\otimes\widehat S\otimes \widehat T)
\end{align} 
\end{dfn}
Bicharacters are used in the construction of the twisted tensor product.
The reduced bicharacter from Theorem \ref{QGconstruction} is a
special case of a bicharacter. In fact the only bicharacter we will
need for our simplified twisted tensor products is the bicharacter
$W_G$ associated to the multiplicative unitary $\mathbb W_G$ from
Example \ref{GroupMU}.
\begin{dfn}
A \emph{continuous coaction} of a quantum group $\mathbb G=(S,\Delta_S)$
on a $C^*$-algebra $A$ is a homomorphism $\delta:A\to M(A\otimes S)$ such
that
\begin{enumerate}
\item
$\delta$ is 1-1
\item
$(\text{id}_A\otimes\Delta_S)\delta=(\delta\otimes\text{id}_S)\delta$
\item
$\delta(A)\cdot(1_A\otimes S)=A\otimes S$
\end{enumerate}
We will sometimes refer to $A$ as a $\mathbb G$-$C^*$-algebra.
\end{dfn}
We can also define coactions of quantum groups on $C^*$-correspondences:
\begin{dfn}
A \emph{$\mathbb G$-equivariant $C^*$-correspondence} over a $\mathbb G$
-$C^*$-algebra $(A,\delta)$ is a $C^*$-correspondence $X$ over $A$ with a
coaction $\sigma:X\to M(X\otimes S)$ such that
\begin{enumerate}
\item
$\sigma(x)\delta(a)=\sigma(xa)$ and 
$\delta(a)\sigma(x)=\sigma(ax)$ for $x\in X$ and $a\in A$
\item
$\delta\big(\langle x,y\rangle_A\big)=\langle\sigma(x),\sigma(y)\rangle
_ {M(A\otimes S)}$
\item
$\sigma(x)\cdot(1\otimes S)= X\otimes S$
\item
$(1\otimes S)\cdot\sigma(x)=X\otimes S$
\item
$(\sigma\otimes\text{id}_A)\sigma=(\text{id}_X\otimes\sigma)\sigma$
\end{enumerate}

\end{dfn}

\subsection{Twisted Tensor Products}
Throughout this section, $\mathbb G=(S,\Delta_S)$ and $\mathbb H=(T,\Delta_T)$
will be quantum groups, $\chi$ will be a bicharacter from
$\mathbb G\to\widehat{\mathbb H}$, and $A$ and $B$ will be $C^*$-algebras
carrying coactions $\delta_A$ and $\delta_B$ of $\mathbb G$ and $\mathbb H$ respectively.
\begin{dfn}[Definition 3.1 of \cite{Woronowicz}]\label{HPdefinition}
A \emph{$\chi$-Heisenberg pair} (or simply \emph{Heisenberg pair})
is a pair of representations $\pi:S\to\mathcal B(\mathcal H)$
and $\rho:T\to\mathcal B(\mathcal H)$ such that
$$W_{1\pi}^SW_{2\rho}^T=W_{2\rho}^TW_{1\pi}^S\chi_{12}\in
\mathcal U\big(\widehat S\otimes\widehat T\otimes\mathcal K(\mathcal H)\big)
$$ where $W^S_{1\pi}=\big((\text{id}_{\widehat S}\otimes\pi)W^S\big)_{13}$
and $W^T_{2\rho}=\big((\text{id}_{\widehat T}\otimes\rho)W^T\big)_{23}$.
\end{dfn}
\begin{dfn}\label{TPdef}
Suppose $(\pi,\rho)$ is a Heisenberg pair. Define maps
\begin{align*}
i_A:A&\to M\big(A\otimes B\otimes\mathcal K(\mathcal H)\big)\\
i_B:B&\to M\big(A\otimes B\otimes\mathcal K(\mathcal H)\big)
\end{align*}
as follows:
\begin{align*}
i_A(a)&=(\text{id}_A\otimes\pi)\delta_A(a)_{13}\\
i_B(b)&=(\text{id}_B\otimes\rho)\delta_B(b)_{23}
\end{align*}
It is shown in Lemma 3.20 of \cite{Woronowicz} that 
$A\boxtimes_\chi B:=i_A(A)\cdot i_B(B)$
is a $C^*$-subalgebra of $A\otimes B\otimes\mathbb K(\mathcal H)$ and that,
up to isomorphism, $A\boxtimes_\chi B$ does not depend upon the choice of
Heisenberg pair. We refer to $A\boxtimes_\chi B$ as the \emph{twisted
tensor product} of $A$ and $B$ and we write $a\boxtimes b$ for $i_A(a)i_B(b)$.
\end{dfn}
This construction can also be extended to correspondences. Recall that, 
given a $C^*$-correspondence $X$ over a $C^*$-algebra $A$, the 
\emph{linking algebra} $L(X)$ is the algebra $\mathbb K(X\oplus A)$. 
This is
often thought of in terms of it's block matrix form:
$\begin{bmatrix}\mathcal K(X) & X \\
\overline X & A \\
\end{bmatrix}$. We will make use of the following proposition:
\begin{prop}[Proposition 2.7 of \cite{Baaj}]
Let $\delta_{L(X)}:L(X)\to M(L(X)\otimes S)$ be a coaction of $\mathbb G$
on $L(X)$ such that the inclusion $j_A:A\to L(X)$ is $\mathbb G$-equivariant.
Then there exists a unique coaction $\sigma$ on
$X$ such that $j_{M(X\otimes S)}\circ\sigma=\delta_{L(X)}\circ j_X$ where
$j_{M(X\otimes S)}$ is the inclusion $M(X\otimes S)\to M(L(X)\otimes S)$
and $j_X$ is the inclusion $X\to L(X)$.

Conversely, if $\sigma$ is a coaction of $\mathbb G$ on $X$,
then there is a unique coaction $\delta_{L(X)}$ of $\mathbb G$ on $L(X)$
such
that $j_{M(X\otimes S)}\circ\sigma=\delta_{L(X)}\circ j_X$ and such that 
$j_A:A\to L(X)$
is $\mathbb G$-equivariant.
\end{prop}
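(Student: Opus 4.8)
The plan is to exploit the corner structure of the linking algebra: the complementary projections $p,q\in M(L(X))$ recover $X=pL(X)q$, $A=qL(X)q$, $\overline X=qL(X)p$, and $\mathcal K(X)=pL(X)p$, and the multiplication rules recorded earlier translate the module operations on $X$ into ordinary multiplication and involution in $L(X)$. Both directions then amount to transporting a coaction across these identifications; the essential point that makes this work is that any coaction on $L(X)$ compatible with $j_A$ must fix $p$ and $q$.

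For the forward direction, I would first extend $\delta_{L(X)}$ to a strictly continuous homomorphism on $M(L(X))$. Writing $q$ as the strict limit of $j_A(e_\lambda)$ for an approximate unit $(e_\lambda)$ of $A$, equivariance of $j_A$ together with the fact that the multiplier extension of $\delta_A$ sends $1_{M(A)}$ to $1_{M(A\otimes S)}$ yields $\delta_{L(X)}(q)=q\otimes 1$, and hence $\delta_{L(X)}(p)=p\otimes 1$. Consequently $\delta_{L(X)}$ carries the corner $pL(X)q$ into $(p\otimes 1)M(L(X)\otimes S)(q\otimes 1)\cong M(X\otimes S)$, so I may define $\sigma$ by declaring $j_{M(X\otimes S)}\circ\sigma=\delta_{L(X)}\circ j_X$; this choice is forced, and injectivity of $j_{M(X\otimes S)}$ gives uniqueness immediately. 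That $\sigma$ is a correspondence coaction is then checked by applying the homomorphism $\delta_{L(X)}$ to the identities $j_X(x)^*j_X(y)=j_A(\langle x,y\rangle_A)$ and $j_X(x)j_A(a)=j_X(xa)$ inside $L(X)$ and reading off axioms (1) and (2); the coaction identity and the density conditions (3)--(4) descend from the corresponding properties of $\delta_{L(X)}$ restricted to the fixed corners.

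For the converse, starting from $\sigma$ together with the coaction $\delta_A$ on $A$ it determines via axiom (2), I would build $\delta_{L(X)}$ corner by corner on the dense $*$-subalgebra spanned by the four corners: use $\delta_A$ on $A$, use $\sigma$ on $X$, define the value on $\overline X$ by involution, and define the value on $\Theta_{x,y}\in\mathcal K(X)$ so that it agrees with the product of the images of $x$ and $y^*$. The multiplication and involution rules for $L(X)$, matched against the correspondence coaction axioms, show these assignments are mutually consistent and multiplicative, so they assemble into a single $*$-homomorphism extending continuously to $L(X)$; by construction $j_A$ is equivariant and the compatibility equation with $\sigma$ holds. The coaction axioms for $\delta_{L(X)}$ are inherited from those for $\sigma$ and $\delta_A$, and uniqueness follows because the values on the $A$- and $X$-corners are forced, while the remaining corners are determined by $*$ and products.

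The main obstacle will be the first direction, specifically showing that equivariance of the single corner embedding $j_A$ forces $p$ and $q$ to be coaction-fixed; this is what unlocks corner preservation and requires care with the strict extension of the coaction to the multiplier algebra and the behavior of approximate units. Once corner-preservation is in hand, the remaining verifications are routine manipulations in $L(X)$, the only delicate points being the nondegeneracy conditions (3)--(4) in the definition of a correspondence coaction, which must be extracted from the single density condition $\delta_{L(X)}\big(L(X)\big)(1\otimes S)=L(X)\otimes S$.
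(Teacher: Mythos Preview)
The paper does not give its own proof of this proposition: immediately after stating it, the author writes ``We refer the reader to \cite{Baaj} for the proof.'' So there is nothing in the paper to compare your argument against. Your sketch is the standard linking-algebra approach one finds in the literature (and is in the spirit of the Baaj--Skandalis argument): fix the corner projections via the strict extension, restrict to the $pL(X)q$ corner to get $\sigma$, and conversely assemble $\delta_{L(X)}$ from $\sigma$ and $\delta_A$ on the four corners. The outline is sound; the only places requiring genuine care are exactly the ones you flag --- that the strict extension of $\delta_{L(X)}$ exists and sends $q$ to $q\otimes 1$, and that the density/nondegeneracy conditions transfer correctly between the linking algebra and the correspondence --- but these are routine once set up.
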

We refer the reader to \cite{Baaj} for the proof.
\begin{dfn}\label{TwistedCorr}
Let $(X,A)$ and $(Y,B)$ be $C^*$-correspondences with coactions
of $\mathbb G$ and $\mathbb H$ respectively and let $\delta_{L(X)}$
and $\delta_{L(Y)}$ be the induced coactions of $L(X)$ and $L(Y)$.
We can form the twisted tensor product $L(X)\boxtimes_\chi L(Y)$.
Viewing $X$ and $Y$ as subspaces of $L(X)$ and $L(Y)$,
we define
$$X\boxtimes_\chi Y:=\iota_{L(X)}(X)\cdot\iota_{L(Y)}(Y)$$
Proposition 5.10 of \cite{Woronowicz}, and the discussion which follows it,
shows that this is a
correspondence over $A\boxtimes_\chi B$ with left action given by
$\phi_X\boxtimes\phi_Y$. It also shows that 
\begin{align}
\mathcal K(X\boxtimes_\chi
Y)\cong \mathcal K(X)\boxtimes_\chi \mathcal K(Y)\label{TwistedProdOfCompacts}
\end{align}
a fact which we shall make use of later.
\end{dfn}

\section{Discrete Group Twisted Tensor Products}
\subsection{Basics}
In what follows we will restrict our attention to the following
special case of the twisted tensor product construction:
\begin{dfn}
Suppose that $G$ is a discrete group, $(A,G,\alpha)$ is a $C^*$-dynamical
system, and $(B,G,\delta)$ is a coaction. Let $\delta^\alpha:A\to {M(A\otimes
c_0(G)\big)}$ be the coaction of $c_0(G)$ (as a quantum group) on $A$ associated
to $\alpha$ as in Theorem 9.2.4 of \cite{Timmermann}. We can view $\delta$
as a coaction of the quantum group $C_r^*(G)$ and we can form the twisted tensor
product $A\boxtimes_{W_G} B$ where $W_G$ is the multiplicative unitary of
Example \ref{GroupMU} viewed as a bicharacter from $c_0(G)$ to $C_r^*(G)$
(in other words the reduced bicharacter of the quantum group $C_r^*(G)$).
We refer to this special case as a \emph{discrete group twisted tensor product}.
Since this construction depends only upon the action and coaction, we
will sometimes write $A\tensor[_\alpha]\boxtimes{_\delta} B$ for $A\boxtimes_{W_G}
B$. We can also define a $C^*$-algebra by $B\tensor[_\delta]\boxtimes{_\alpha}
A=B\boxtimes_{\widehat W_G}A$. It is easy to see that the map $a\boxtimes
b\mapsto b\boxtimes a$ extends to an isomorphism $A\tensor[_\alpha]\boxtimes
{_\delta} B\cong B\tensor[_\delta]\boxtimes{_\alpha}A$.
\end{dfn}
The main reason that this special case is of interest is that we can write
down
a precise formula for the multiplication and involution of certain elementary
tensors. To understand this, we must recall that the coaction $\delta$ of
a discrete group $G$ on $B$ gives rise to a $G$-grading of $B$. That is,
there exist subspaces $\{B_s\}_{s\in G}$ such that
\begin{enumerate}
\item
$\overline{\text{span}}(B_s)=B$
\item
$B_s\cdot B_t\subseteq B_{st}$
\item
$B_s^*=B_{s^{-1}}$.
\end{enumerate}
Specifically, $B_s=\{b\in B:\delta(b)=b\otimes u_s\}$. With this in mind,
we present the following:
\begin{prop}\label{Formulas}
Given a $C^*$-dynamical system $(A,G,\alpha)$ and a coaction $(B,G,\delta)$,
let $a,a'\in A$, $b_s\in B_s$ and $b\in B$. Then, in the twisted tensor
product $A\tensor[_\alpha]\boxtimes{_\delta}B$ we have:
\begin{align*}
(a\boxtimes b_s)(a'\boxtimes b)&=a\alpha_s(a')\boxtimes b_sb\\
(a\boxtimes b_s)^*&=\alpha_{s^{-1}}(a)^*\boxtimes b_s^*
\end{align*}
\end{prop}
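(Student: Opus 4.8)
The plan is to compute both formulas directly from Definition \ref{TPdef}, using the explicit form of the maps $i_A$ and $i_B$ together with the fact that the coaction $\delta$ on $B$ is a grading. The key simplification in this discrete setting is that for $b_s \in B_s$ we have $\delta(b_s) = b_s \otimes u_s$, so the map $i_B$ applied to a homogeneous element $b_s$ becomes very concrete: $i_B(b_s) = (\mathrm{id}_B \otimes \rho)(b_s \otimes u_s)_{23} = (b_s)_2 \cdot \rho(u_s)_3$, where $\rho(u_s)$ is a unitary on $\mathcal H$ implementing the representation of $C^*_r(G)$. Similarly, since $\alpha$ corresponds to the $c_0(G)$-coaction $\delta^\alpha$, the map $i_A$ will involve $\pi$ applied to the evaluation functions in $c_0(G)$, which for a discrete group act as the spectral projections detecting the group element.

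First I would unwind what the Heisenberg commutation relation $W^S_{1\pi}W^T_{2\rho} = W^T_{2\rho}W^S_{1\pi}\chi_{12}$ says at the level of the unitaries $\pi$ and $\rho$ for the specific bicharacter $\chi = W_G$. For $c_0(G)$ and $C^*_r(G)$, the reduced bicharacter $W_G$ is the function $(s,t)\mapsto$ (its value as a multiplier), and the Heisenberg relation should reduce to a covariance identity of the form $\rho(u_s)\,\pi(f)\,\rho(u_s)^* = \pi(\lambda_s f)$ (or its inverse), i.e. $\rho(u_s)$ implements the translation action on $c_0(G)$ that corresponds to $\alpha$. This is the crucial bridge: it converts the abstract twisting into the concrete statement that conjugating the $A$-part by $\rho(u_s)$ produces $\alpha_s$. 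I would extract this identity first, since the entire multiplication formula hinges on it.

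With that relation in hand, the multiplication $(a \boxtimes b_s)(a' \boxtimes b')$ is computed by writing it as $i_A(a)\,i_B(b_s)\,i_A(a')\,i_B(b')$ and commuting $i_B(b_s)$ past $i_A(a')$. Since $i_B(b_s)$ carries the factor $\rho(u_s)$ in the third leg, and $i_A(a')$ carries $\pi$ applied to $\delta^\alpha(a')$ in legs $1,3$, moving $\rho(u_s)$ across and reabsorbing it via the covariance identity will convert $a'$ into $\alpha_s(a')$, yielding $i_A(a\alpha_s(a'))\,i_B(b_s b')$ after recognizing $b_s b'$ as the product in $B$. The involution formula follows the same pattern: $(a \boxtimes b_s)^* = i_B(b_s)^* i_A(a)^*$, and using $b_s^* \in B_{s^{-1}}$ together with the covariance relation (now for $s^{-1}$) rearranges this into $i_A(\alpha_{s^{-1}}(a^*))\,i_B(b_s^*) = \alpha_{s^{-1}}(a)^* \boxtimes b_s^*$.

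The main obstacle I expect is \textbf{pinning down the exact covariance identity} satisfied by $\pi$ and $\rho$, including getting the direction of the translation ($\lambda_s$ versus $\lambda_{s^{-1}}$) and any conjugate/adjoint placement correct, directly from Definition \ref{HPdefinition} and the explicit form of $W_G$. Everything downstream is bookkeeping with leg-numbering, but the sign/direction conventions in the Heisenberg relation are easy to get backwards, and the appearance of $\alpha_s$ (rather than $\alpha_{s^{-1}}$) in the multiplication formula must be traced carefully through the definition of $\delta^\alpha$ from Theorem 9.2.4 of \cite{Timmermann}. To minimize risk I would cross-check the resulting formulas against the two degenerate cases the construction is meant to generalize: taking $\delta$ trivial should recover the crossed product relations of Proposition \ref{CrossedByActionDiscrete}, and taking $\alpha$ trivial should recover the ordinary tensor product, which together essentially force the stated formulas.
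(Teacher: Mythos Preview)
Your approach is essentially the same as the paper's: isolate a commutation relation $i_B(b_s)\,i_A(a')=i_A(\alpha_{s^{-1}}(a'))\,i_B(b_s)$ and then read off both formulas by rearranging $i_A(a)i_B(b_s)i_A(a')i_B(b)$ and $(i_A(a)i_B(b_s))^*$. The one difference is how that commutation relation is obtained. You propose to extract the covariance identity for $(\pi,\rho)$ directly from the abstract Heisenberg-pair equation in Definition~\ref{HPdefinition}, and you correctly flag this as the main obstacle. The paper sidesteps that obstacle entirely: since $A\boxtimes_{W_G}B$ is independent of the choice of Heisenberg pair, it fixes the concrete pair $(\pi,\rho)=(m,\lambda)$ on $L^2(G)$ (citing \cite[Example~3.9]{Woronowicz} for the fact that this is a $W_G$-Heisenberg pair), and then the needed covariance $\lambda(s)^*m(f)\lambda(s)=m(\lambda_s f)$ is just the classical statement that the regular representation implements left translation on multiplication operators. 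Chaining this with the covariance of $(\delta^\alpha,\sigma_2)$ for $(A,G,\alpha)$ gives the commutation relation with no bookkeeping about bicharacters or leg conventions. Your route would work too, but the paper's choice of a specific Heisenberg pair is the cleaner way to avoid the sign-and-direction pitfalls you anticipated.
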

Before we prove this, we will need the following:
\begin{lem}
Let $G$ be a locally compact group and let $m:C_0(G)\to\mathbb B\big(L^2(G)\big)$
be the left action of $C_0(G)$ on $L^2(G)$ by multiplication of functions
in $L^2(G)$. Let $\lambda:C^*(G)\to\mathbb B\big(L^2(G)\big)$ be the left
regular representation. Then $(m,\lambda)$ is a $W_G$-Heisenberg pair where
$W_G$ is the reduced bicharacter of $G$ as in Example \ref{QGfromGroup}
\end{lem}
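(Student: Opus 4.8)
The plan is to verify the single identity of Definition \ref{HPdefinition} directly, after first pinning each of the three operators appearing in it down as a concrete operator on a tensor power of $L^2(G)$. Here the relevant quantum groups are $S=C_0(G)$, with dual $\widehat S=C^*_r(G)$, and $T=C^*_r(G)$, with dual $\widehat T=C_0(G)$, so that $\pi=m$ and $\rho=\lambda$ have the correct domains. First I would record the two reduced bicharacters explicitly. Since $C^*_r(G)$ is the quantum group generated by $\mathbb W_G$ (Example \ref{QGfromGroup}), its reduced bicharacter is $W^T=\mathbb W_G$, and because $\lambda$ is already the representation realizing $C^*_r(G)$ on $L^2(G)$ we get $(\text{id}_{\widehat T}\otimes\rho)W^T=\mathbb W_G$, acting by $(\mathbb W_G\zeta)(s,t)=\zeta(s,s^{-1}t)$. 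The algebra $C_0(G)$ is the quantum group dual to $C^*_r(G)$, generated by $\widehat{\mathbb W}_G=\Sigma\mathbb W_G^*\Sigma$ (Definition \ref{QGdefinition}), so its reduced bicharacter is $W^S=\widehat{\mathbb W}_G$; since $m$ is the defining multiplication representation of $C_0(G)$ we get $(\text{id}_{\widehat S}\otimes\pi)W^S=\widehat{\mathbb W}_G$, acting by $(\widehat{\mathbb W}_G\zeta)(s,t)=\zeta(ts,t)$.

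Next I would represent all three legs of $\widehat S\otimes\widehat T\otimes\mathcal K(\mathcal H)$ faithfully on $L^2(G)^{\otimes 3}$, taking $\lambda$ on the $\widehat S=C^*_r(G)$ leg, $m$ on the $\widehat T=C_0(G)$ leg, and the identity realization on $\mathcal H=L^2(G)$. Under these identifications $W^S_{1\pi}=(\widehat{\mathbb W}_G)_{13}$ and $W^T_{2\rho}=(\mathbb W_G)_{23}$, while the bicharacter $\chi=W_G$, carried into $\widehat S\otimes\widehat T$ by the canonical flip-and-adjoint identification, becomes $\chi_{12}=(\widehat{\mathbb W}_G)_{12}$. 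The statement to prove is thus the operator equation $(\widehat{\mathbb W}_G)_{13}(\mathbb W_G)_{23}=(\mathbb W_G)_{23}(\widehat{\mathbb W}_G)_{13}(\widehat{\mathbb W}_G)_{12}$ on $L^2(G\times G\times G)$.

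I would then establish this by a direct change of variables using the two displayed kernel formulas: applying each side to a function $\zeta$ and substituting group translations shows that both sides send $\zeta(r,s,t)$ to $\zeta(tr,s,s^{-1}t)$. Equivalently, and more conceptually, after writing $\widehat{\mathbb W}_G$ and $\mathbb W_G$ as the fields of unitaries $t\mapsto\lambda_{t^{-1}}$ and $s\mapsto\lambda_s$, the identity collapses to the elementary covariance relation $\lambda_s m_f\lambda_s^{*}=m_{\lambda_s(f)}$ on $L^2(G)$ together with the commutation of operators acting on different legs; the factor $\chi_{12}$ is precisely what absorbs the index shift $f\mapsto\lambda_s(f)$ produced by that relation.

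I expect the only genuine difficulty to be bookkeeping rather than analysis: correctly identifying which multiplicative unitary furnishes each reduced bicharacter (in particular the flip-and-adjoint $\widehat{\mathbb W}_G=\Sigma\mathbb W_G^*\Sigma$ relating the bicharacter of $C_0(G)$ to that of $C^*_r(G)$) and keeping the leg numbering straight across $\widehat S\otimes\widehat T\otimes\mathcal K(\mathcal H)$. Once the three operators are written explicitly on $L^2(G)^{\otimes 3}$, the verification is just the covariance relation above, which is valid for any locally compact $G$ since $(s,t)\mapsto(s,s^{-1}t)$ preserves left Haar measure and no unimodularity is required.
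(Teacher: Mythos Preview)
Your proposal is correct and takes a genuinely different route from the paper. The paper's entire proof is the single sentence ``This is a special case of Example~3.9 of \cite{Woronowicz},'' i.e.\ it defers the verification to the Meyer--Roy--Woronowicz paper. You instead unpack the definition and verify the Heisenberg-pair identity directly on $L^2(G)^{\otimes 3}$: after identifying $W^T=\mathbb W_G$, $W^S=\widehat{\mathbb W}_G=\Sigma\mathbb W_G^*\Sigma$, and the bicharacter leg $\chi_{12}$, the required equation becomes $(\widehat{\mathbb W}_G)_{13}(\mathbb W_G)_{23}=(\mathbb W_G)_{23}(\widehat{\mathbb W}_G)_{13}(\widehat{\mathbb W}_G)_{12}$, and a direct change-of-variables check (both sides send $\zeta(r,s,t)$ to $\zeta(tr,s,s^{-1}t)$) confirms it. Your observation that this reduces to the covariance relation $\lambda_s m_f\lambda_s^{-1}=m_{\lambda_s(f)}$ is exactly the conceptual content. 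What your approach buys is self-containment: a reader need not consult \cite{Woronowicz} to see why the pair works. What the paper's approach buys is brevity and an anchor to the general theory where this example is already catalogued. The one place to be careful---which you rightly flag---is matching the conventions: the paper calls $W_G$ ``the reduced bicharacter of $C^*_r(G)$,'' placing it a priori in $\mathcal U(C_0(G)\otimes C^*_r(G))$, whereas the bicharacter slot $\mathcal U(\widehat S\otimes\widehat T)$ here is $\mathcal U(C^*_r(G)\otimes C_0(G))$; your ``flip-and-adjoint'' passage to $\widehat{\mathbb W}_G$ is the right move, but in a fully written-out proof you would want to state explicitly which identification of bicharacters (as in \cite{Woronowicz}) you are invoking so that $\chi_{12}=(\widehat{\mathbb W}_G)_{12}$ is unambiguous.
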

\begin{proof}
This is a special case of Example 3.9 of \cite{Woronowicz}.
\end{proof}
\begin{lem}\label{Commutation}
In the situation of the above proposition, let $$i_A:A\to
M\big(A\otimes B\otimes\mathbb K(L^2(G))\big)$$
$$i_B:B\to M\big(A\otimes B\otimes\mathbb K(L^2(G))\big)$$
be the maps associated to the Heisenberg pair $(m,\lambda)$ as described
in Definition \ref{TPdef}.
Then for any $a\in A$, $s\in G$ and $b_s\in B_s$ we have
$$i_B(b_s)i_A(a)=i_A\big(\alpha_{s^{-1}}(a)\big)i_B(b_s)$$
\end{lem}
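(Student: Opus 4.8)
The plan is to unwind the definitions of $i_A$ and $i_B$ from Definition \ref{TPdef} and reduce the claimed commutation relation to a computation about the multiplicative unitary $W_G$ acting on $L^2(G)$. Specifically, since $(m,\lambda)$ is the Heisenberg pair from the previous lemma, I would first write out $i_A(a) = (\text{id}_A\otimes m)\delta^\alpha(a)_{13}$ and $i_B(b_s) = (\text{id}_B\otimes\lambda)\delta(b_s)_{23}$ explicitly. The key simplification is that $b_s\in B_s$ means $\delta(b_s)=b_s\otimes u_s$, so $i_B(b_s)$ collapses to $b_s$ placed in the second leg tensored with $\lambda(u_s)=\lambda_s$ (the left-translation operator) in the third leg of $A\otimes B\otimes\mathbb K(L^2(G))$. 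This is the crucial place where the grading hypothesis does all the work: it turns the second factor $i_B(b_s)$ into a genuine elementary tensor rather than an element living in a multiplier algebra, which is what makes the commutation tractable.

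Next I would identify $i_A(a)$ concretely. Under the correspondence between the coaction $\delta^\alpha$ of $c_0(G)$ and the action $\alpha$ (Theorem 9.2.4 of \cite{Timmermann}), the operator $(\text{id}_A\otimes m)\delta^\alpha(a)$ on $A\otimes \mathbb K(L^2(G))$ should be expressible as the operator that sends a function $\xi\in L^2(G)$ at the point $t$ to $\alpha_{t^{-1}}(a)$ acting in the first leg — that is, $i_A(a)$ is a ``diagonal'' operator whose value in the first factor at group element $t$ is $\alpha_{t^{-1}}(a)$ (or $\alpha_t(a)$, depending on conventions). Establishing exactly this pointwise formula, with the correct placement of inverses, is the step I expect to be the main obstacle, since it requires pinning down the precise form of $\delta^\alpha$ coming from $\alpha$ and the multiplication action $m$ of $c_0(G)$ on $L^2(G)$.

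Once both operators are in hand, the verification becomes a direct calculation on $L^2(G)$. The operator $\lambda_s$ in the third leg shifts the argument by $s$, so conjugating the diagonal operator $i_A(a)$ (with value $\alpha_{t^{-1}}(a)$ at $t$) by $\lambda_s$ shifts its value from $t$ to $s^{-1}t$, producing the diagonal operator with value $\alpha_{t^{-1}s}(a) = \alpha_{(s^{-1}t)^{-1}}(a)$. I would then recognize the resulting operator as $i_A(\alpha_{s^{-1}}(a))$, using the cocycle identity $\alpha_{t^{-1}s} = \alpha_{t^{-1}}\circ\alpha_s$ composed with the shift, which is exactly what converts the left-hand side $i_B(b_s)i_A(a)$ into $i_A(\alpha_{s^{-1}}(a))i_B(b_s)$. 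The second and third factors ($b_s$ and $\lambda_s$) commute past the first-leg operator $i_A$ trivially since they act on disjoint tensor legs, so the whole identity reduces to this single conjugation computation.

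The final step is bookkeeping: I would check that the inverses and the direction of the shift in the preceding conjugation match the claimed $\alpha_{s^{-1}}$ precisely, and confirm that no multiplier-algebra subtleties arise (the collapse of $i_B(b_s)$ to an elementary tensor means everything stays inside $A\otimes B\otimes\mathbb K(L^2(G))$ and the manipulations are legitimate). With the correct conventions fixed, the commutation relation $i_B(b_s)i_A(a)=i_A\big(\alpha_{s^{-1}}(a)\big)i_B(b_s)$ follows, and this will immediately yield the multiplication and involution formulas of Proposition \ref{Formulas}.
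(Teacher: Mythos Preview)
Your approach is correct and follows the same overall structure as the paper: you both reduce $i_B(b_s)$ to the elementary tensor $1\otimes b_s\otimes\lambda_s$ using $\delta(b_s)=b_s\otimes u_s$, and then the whole lemma comes down to commuting $\lambda_s$ past $(\text{id}_A\otimes m)\delta^\alpha(a)$ in the first and third legs.

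The one difference is in how that commutation is verified. You propose computing $(\text{id}_A\otimes m)\delta^\alpha(a)$ explicitly as a diagonal operator on $L^2(G)$ with value $\alpha_{t^{-1}}(a)$ at $t$, and then checking the conjugation by $\lambda_s$ pointwise; you flag this as the main obstacle because of the bookkeeping with conventions. The paper sidesteps that computation entirely: it observes that $(\delta^\alpha,1\otimes\sigma)$ is a covariant pair for $(A,G,\alpha)$ into $M(A\otimes c_0(G))$ and that $(m,\lambda)$ is covariant for $(c_0(G),G,\sigma)$, so the composite $\big((\text{id}_A\otimes m)\circ\delta^\alpha,\,1\otimes\lambda\big)$ is covariant for $(A,G,\alpha)$. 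Covariance is exactly the statement $\lambda_2(s)\big((\text{id}_A\otimes m)\delta^\alpha(a)\big)=\big((\text{id}_A\otimes m)\delta^\alpha(\alpha_{s^{-1}}(a))\big)\lambda_2(s)$, which is the identity you need, with the sign of the inverse already fixed. Your pointwise argument would establish the same covariance relation by hand, so the two proofs are equivalent; the paper's route just saves you from tracking the conventions you were worried about.
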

\begin{proof}
Recall that $(m,\lambda)$ is a covariant homomorphism 
$(c_0(G),G,\sigma)\to\mathbb B(L^2(G))$ 
where sigma is left translation
in $c_0(G)$. Also, $(\delta^\alpha,\sigma_2)$ is a covariant homomorphism
$(A,G,\alpha)\to M\big(A\otimes c_0(G)\big)$. Thus ${\big((\text{id}_A
\otimes m)\circ
\delta^\alpha,\lambda_2\big)}$ is a covariant homomorphism $(A,G,\alpha)
\to M\big(A\otimes \mathbb B(L^2(G))\big)$. Thus 
$$\lambda_2(s)^*\Big((\text{id}_A\otimes m)\circ\delta^\alpha(a)\Big)
\lambda_2(s)=(\text{id}_A\otimes m)\circ\delta^\alpha\big(\alpha_s(a)\big)$$
or equivalently
$$\Big((\text{id}_A\otimes m)\circ\delta^\alpha\big(\alpha_{s^{-1}}(a)\big)
\Big)\lambda_2(s)=\lambda_2(s)
\Big((\text{id}_A\otimes m)\circ\delta^\alpha(a)\big)\Big)$$
With this in mind, we notice the following:
\begin{align*}
i_B(b_s)i_A(a)&=(\text{id}_B\otimes\lambda)\delta(b_s)_{23}
(\text{id}_A\otimes m)\delta^\alpha(a)_{13}\\
&=\big(1_A\otimes b_s\otimes\lambda(s)\big)
(\text{id}_A\otimes m)\delta^\alpha(a)_{13}\\
&=(b_s)_2\big(\lambda(s)\big)_3(\text{id}_A\otimes m)\delta^\alpha
(a)_{13}\\
&=(b_s)_2(\text{id}_A\otimes m)\delta^\alpha\big(\alpha_{s^{-1}}(a)\big)_{13}
\big(\lambda(s)\big)_3\\
&=(\text{id}_A\otimes m)\delta^\alpha\big(\alpha_{s^{-1}}(a)\big)_{13}
(b_s)_2\big(\lambda(s)\big)_3\\
&=i_A\big(\alpha_{s^{-1}}(a)\big)i_B(b_s)
\end{align*}
\end{proof}
We can now prove Proposition \ref{Formulas}.
\begin{proof}\emph{(of Proposition \ref{Formulas})}

We have:
\begin{align*}
(a\boxtimes b_s)(a'\boxtimes b)&=i_A(a)i_B(b_s)i_A(a')i_B(b)\\
&=i_A(a)i_A\big(\alpha_s(a')\big)i_B(b_s)i_B(b)\\
&=a\alpha_s(a')\boxtimes b_sb
\end{align*}
and
\begin{align*}
(a\boxtimes b_s)^*&=\big(i_A(a)i_B(b_s)\big)^*\\
&=i_B(b_s^*)i_A(a^*)\\
&=i_A\big(\alpha_{s^{-1}}(a)^*\big)i_B(b_s^*)\\
&=\alpha_{s^{-1}}(a)^*\boxtimes b_s^*
\end{align*}
\end{proof}
We also have simple formulas for the algebraic properties of
twisted tensor products of correspondences:
\begin{prop}
Let $(X,A)$ be a correspondence with an action $(\gamma,\alpha)$ of
$G$ and $(Y,B)$ be a correspondence with a coaction $(\sigma,\delta)$
of $G$.
Let $\alpha_{L(X)}$ be the action of $G$ on $L(X)$ induced by the
action of $G$ on $X$ and let $\delta_{L(Y)}$
be the coaction of $G$ on $L(Y)$ induced by $(\sigma,\delta)$.
We can form the correspondence 
$X\tensor[_\gamma]\boxtimes{_\sigma}Y:=X\boxtimes_{W_G}Y
\subseteq L(X)\boxtimes_{W_G}L(Y)=L(X)\tensor[_{\alpha_{L(X)}}]\boxtimes{_
{\delta_{L(Y)}}}L(Y)$
as in Definition
\ref{TwistedCorr}. Then
\begin{enumerate}
\item
$(a\boxtimes b_s)(x\boxtimes y)=a\gamma_s(x)\boxtimes b_s y$
\item
$(x\boxtimes y_s)(a\boxtimes b)=x\alpha_s(a)\boxtimes y_sb$
\item
$\langle x\boxtimes y_s,x'\boxtimes y\rangle_{A\boxtimes B}
=\alpha_{s^{-1}}\big(\langle x,x'\rangle_A\big)\boxtimes\langle y_s,y\rangle_B$
\end{enumerate}
\end{prop}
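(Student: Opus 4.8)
The plan is to carry out the entire computation inside the linking algebra $L(X)\boxtimes_{W_G}L(Y)$. Since this is itself a discrete group twisted tensor product $L(X)\tensor[_{\alpha_{L(X)}}]\boxtimes{_{\delta_{L(Y)}}}L(Y)$, Proposition \ref{Formulas} applies verbatim to elementary tensors $\ell\boxtimes m$ with $\ell\in L(X)$ and $m\in L(Y)$: whenever $m$ is homogeneous of degree $s$ for the grading of $L(Y)$ induced by $\delta_{L(Y)}$, we have $(\ell\boxtimes m)(\ell'\boxtimes m')=\ell\,\alpha_{L(X),s}(\ell')\boxtimes mm'$ and $(\ell\boxtimes m)^*=\alpha_{L(X),s^{-1}}(\ell)^*\boxtimes m^*$. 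I will then read off the three identities by tracking block positions, using the dictionary recorded in the block-matrix multiplication rules: writing $T_z$ for the image in $L(X)$ of $z\in X$ or $z\in A$ as the corresponding matrix entry, the right action $x\cdot a$ is the product $T_xT_a$ of the $(1,2)$- and $(2,2)$-entries, the inner product $\langle x,x'\rangle_A$ is $T_x^*T_{x'}$, and the left action $a\cdot x=\phi_X(a)x$ is implemented by the $(1,1)$-corner multiplier $\phi_X(a)$. Two homogeneity facts will be used throughout: the action $\alpha_{L(X)}$ restricts to $\gamma$ on the $X$-block and to $\alpha$ on the $A$-block, and the grading of $L(Y)$ restricts (by the argument proving Proposition \ref{CorrGrading}, since the corner projections are coaction-fixed) to the gradings $\{Y_s\}$, $\{B_s\}$ and $\{\mathcal K(Y)_s\}$ on the respective blocks.

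Identities (2) and (3) are then immediate, since the right action and the inner product are honest products in $L(X)$. For (2), apply the multiplication formula with first factor $x\boxtimes y_s$ ($y_s$ of degree $s$) and second factor $T_a\boxtimes T_b$; this gives $T_x\,\alpha_{L(X),s}(T_a)\boxtimes y_sT_b=T_x T_{\alpha_s(a)}\boxtimes y_s b$, and $T_xT_{\alpha_s(a)}=T_{x\alpha_s(a)}$ is exactly the right action $x\cdot\alpha_s(a)$. For (3), I would first rewrite $\langle x\boxtimes y_s,\,x'\boxtimes y\rangle_{A\boxtimes B}=(x\boxtimes y_s)^*(x'\boxtimes y)$, apply the involution formula to get $(x\boxtimes y_s)^*=\gamma_{s^{-1}}(x)^*\boxtimes y_s^*$ with $y_s^*$ homogeneous of degree $s^{-1}$, and then multiply by $x'\boxtimes y$ using the multiplication formula once more. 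The $L(X)$-factor becomes $T_{\gamma_{s^{-1}}(x)}^*T_{\gamma_{s^{-1}}(x')}=T_{\langle\gamma_{s^{-1}}(x),\gamma_{s^{-1}}(x')\rangle_A}$, and the compatibility of the action with the inner product, $\langle\gamma_t(x),\gamma_t(x')\rangle_A=\alpha_t(\langle x,x'\rangle_A)$, collapses this to $\alpha_{s^{-1}}(\langle x,x'\rangle_A)$; the $L(Y)$-factor is $T_{y_s}^*T_y=T_{\langle y_s,y\rangle_B}$, yielding (3).

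The main obstacle is identity (1), because the left action cannot be read off from the naive product of $A$- and $X$-corners: in $L(X)$ the product $T_aT_{\gamma_s(x)}$ of the $(2,2)$- and $(1,2)$-entries is zero. To fix this I would realize the left action of $A\boxtimes B$ on $X\boxtimes Y$ through its defining homomorphism $\phi_X\boxtimes\phi_Y$ (Definition \ref{TwistedCorr}) together with the identification $\mathcal K(X\boxtimes Y)\cong\mathcal K(X)\boxtimes\mathcal K(Y)$ of \eqref{TwistedProdOfCompacts}, so that $a\boxtimes b_s$ acts as $\phi_X(a)\boxtimes\phi_Y(b_s)$, with $\phi_X(a)$ a $(1,1)$-corner multiplier of $L(X)$ and $\phi_Y(b_s)$ a $(1,1)$-corner element of $L(Y)$. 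The key point to verify is that $\phi_Y(b_s)$ is homogeneous of degree $s$; this follows from covariance of the left action with the coaction, equivalently from $B_s\cdot Y_t\subseteq Y_{st}$ in \eqref{gradingLmod}, which says $\phi_Y(b_s)$ carries $Y_t$ into $Y_{st}$. Granting this, the multiplication formula gives $(\phi_X(a)\boxtimes\phi_Y(b_s))(x\boxtimes y)=\phi_X(a)\,\gamma_s(x)\boxtimes\phi_Y(b_s)\,y=a\gamma_s(x)\boxtimes b_s y$, which is (1). The only loose end is that $\phi_X(a)$ and $\phi_Y(b_s)$ may lie in multiplier algebras rather than in $\mathcal K(X)$ and $\mathcal K(Y)$ themselves; I would dispatch this by extending Proposition \ref{Formulas} to multipliers via strict continuity and density, exactly as the left-action commutation formula of Lemma \ref{Commutation} was obtained.
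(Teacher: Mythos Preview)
Your approach is correct and is essentially the one the paper takes: its entire proof is the single sentence ``All of these facts follow from translating to multiplication in $L(X)\boxtimes L(Y)$ and applying Lemma \ref{Commutation}.'' You have simply unpacked that sentence, and in particular you have made explicit the point the paper leaves implicit for identity (1), namely that the left action is not literally a product of the $(2,2)$- and $(1,2)$-corners in $L(X)$ but must be realized through $\phi_X\boxtimes\phi_Y$ acting from the $(1,1)$-corner (equivalently, through covariance of the left action with the coaction, which is exactly how the paper handles the analogous formulas (\ref{ActionLmod}) and (\ref{CoactionLmod}) in the crossed-product propositions). Your check that $\phi_Y(b_s)$ is homogeneous of degree $s$ via (\ref{gradingLmod}) is the right ingredient, and the multiplier extension you flag is routine since Lemma \ref{Commutation} is already stated at the level of the ambient multiplier algebra $M\big(A\otimes B\otimes\mathcal K(L^2(G))\big)$.
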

\begin{proof}
All of these facts follow from translating to multiplication
in $L(X)\boxtimes L(Y)$ and applying Lemma \ref{Commutation}. 
\end{proof}
\begin{cor}\label{TwistedGeneratingSystem}
Let $(X,A)$ be a correspondence with an action $(\gamma,\alpha)$ of
$G$ and $(Y,B)$ be a correspondence with a coaction $(\sigma,\delta)$
of $G$. Suppose $(X^0,A^0)$ is a generating system for $(X,A)$ which is 
stable with respect to the group action. That is, $\gamma_s(X^0)\subseteq
X^0$ and $\alpha_s(A^0)\subseteq A^0$ for all $s\in G$. Suppose further that
$(Y^0,B^0)$ is a generating system for $(Y,B)$ such that the elements of
$Y^0$ and $B^0$ are homogenous with respect to the grading. That is, for
all $y\in Y^0$ there is $s\in G$ such that $y\in Y_s$ and for all $b\in B^0$
there is $t\in G$ such that $b\in B_t$. Then
\begin{align*}
(X\boxtimes Y)_0&:=\{x\boxtimes y:x\in X^0, y\in Y^0\}\\
(A\boxtimes B)_0&:=\{a\boxtimes b:a\in A^0, b\in B^0\}
\end{align*}
form a generating system for $X\boxtimes Y$.
\end{cor}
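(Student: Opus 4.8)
The plan is to verify directly the three requirements of Definition \ref{GeneratingSystem} for the pair $\big((X\boxtimes Y)_0,(A\boxtimes B)_0\big)$, viewed as a generating system for the correspondence $X\boxtimes Y$ over $A\boxtimes B$: two density statements and one closure condition under the module actions.

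First I would establish the density of the spans. By Definition \ref{TPdef}, $A\boxtimes B=i_A(A)\cdot i_B(B)$ is densely spanned by the elementary tensors $a\boxtimes b=i_A(a)i_B(b)$ with $a\in A$, $b\in B$, and by Definition \ref{TwistedCorr}, $X\boxtimes Y$ is densely spanned by the $x\boxtimes y$ with $x\in X$, $y\in Y$. Since $i_A$ and $i_B$ are continuous homomorphisms and multiplication is jointly norm-continuous, the map $(a,b)\mapsto a\boxtimes b$ is continuous; combined with $\overline{\text{span}}(A^0)=A$ and $\overline{\text{span}}(B^0)=B$, approximating each factor by finite linear combinations drawn from $A^0$ and $B^0$ shows that every $a\boxtimes b$ lies in $\overline{\text{span}}\big((A\boxtimes B)_0\big)$. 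As the elementary tensors are dense, $\overline{\text{span}}\big((A\boxtimes B)_0\big)=A\boxtimes B$, and the identical argument gives $\overline{\text{span}}\big((X\boxtimes Y)_0\big)=X\boxtimes Y$.

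For the closure condition I would invoke the multiplication formulas of the preceding proposition. Fix $a\boxtimes b\in(A\boxtimes B)_0$ and $x\boxtimes y\in(X\boxtimes Y)_0$. Homogeneity of $B^0$ gives $b\in B_s$ for some $s\in G$, so the first formula yields $(a\boxtimes b)(x\boxtimes y)=a\gamma_s(x)\boxtimes by$. Action-stability gives $\gamma_s(x)\in X^0$, hence $a\gamma_s(x)\in X^0$ because $(X^0,A^0)$ is closed under the left $A^0$-action, while $by\in Y^0$ because $Y^0$ is closed under the left $B^0$-action; thus $(a\boxtimes b)(x\boxtimes y)\in(X\boxtimes Y)_0$. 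Symmetrically, homogeneity of $Y^0$ gives $y\in Y_t$ for some $t\in G$, and the second formula yields $(x\boxtimes y)(a\boxtimes b)=x\alpha_t(a)\boxtimes yb$; here $\alpha_t(a)\in A^0$ by stability, so $x\alpha_t(a)\in X^0$, and $yb\in Y^0$, giving $(x\boxtimes y)(a\boxtimes b)\in(X\boxtimes Y)_0$.

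The argument is really bookkeeping, so there is no deep obstacle; the one point requiring care is matching each hypothesis to the correct side of each product. The multiplication rules apply only when a specific grading-component of one factor is homogeneous---the $B$-component of the left factor in the first formula and the $Y$-component of the left factor in the second---and the hypotheses are arranged precisely so that homogeneity is imposed on $Y^0$ and $B^0$ while action-stability is imposed on $X^0$ and $A^0$. Verifying that these land on the required sides is exactly what makes the closure condition go through.
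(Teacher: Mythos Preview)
Your proof is correct and follows the same route as the paper: density from bilinearity/continuity of $(a,b)\mapsto a\boxtimes b$, and closure under the module actions from the multiplication formulas of the preceding proposition together with action-stability of $(X^0,A^0)$ and homogeneity of $(Y^0,B^0)$. You are slightly more explicit than the paper about why $by,yb\in Y^0$ and about the density step, but the argument is the same.
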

\begin{proof}
It is clear from the bilinearity of the twisted tensor product that
$\overline{\text{span}}\big((X\boxtimes Y)_0\big)=X\boxtimes Y$ and
$\overline{\text{span}}\big((A\boxtimes B)_0\big)=A\boxtimes B$. To see
that $(X\boxtimes Y)_0$ is stable under the left and right actions of 
elements of $(A\boxtimes B)_0$, let $x\boxtimes y\in (X\boxtimes Y)_0$
and let $a\boxtimes b\in (A\boxtimes B)_0$. By definition, we must
have that $b\in B_s$ and $y\in Y_t$ for some $s,t\in G$. Thus we have:
\begin{align*}
(a\boxtimes b)(x\boxtimes y)&=a\gamma_s(x)\boxtimes by\\
(x\boxtimes y)(a\boxtimes b)&=x\alpha_t(a)\boxtimes yb
\end{align*}
Since $X^0$ and $A^0$ are stable under the action of $G$, $\gamma_s(x)\in
X^0$ and $\alpha_t(a)\in A^0$. Thus $a\gamma_s(x),\gamma_t(a)x\in X^0$ so
$(X\boxtimes Y)_0$ is indeed stable under the left and right actions of
$(A\boxtimes B)_0$.
\end{proof}
\subsection{Examples}
In \cite{Woronowicz}, the authors show that if $A$ and $B$ are 
$\mathbb Z_2$-graded algebras, then the graded tensor product $A\widehat\otimes
B$ is isomorphic $A\boxtimes_{W^{\mathbb Z_2}}B$ (where the coactions on
$A$ and $B$ are the ones canonically associated with the grading) with the
map $a\widehat\otimes b\mapsto a\boxtimes b$ extending to an isomorphism.
Since $\mathbb Z_2$ is self-dual, the coaction of $\mathbb Z_2$ on $A$ gives
rise to an action $\alpha$ of $\mathbb Z_2$ on $A$. If we let $\delta$ denote
the coaction of $\mathbb Z_2$ on $\mathbb Z_2$ we see that $A\boxtimes_
{W^{\mathbb Z_2}}B=A\tensor[_\alpha]\boxtimes{_\delta}B$ so $A\widehat\otimes
B$ fits into our discrete group twisted tensor framework. The following
example shows that the graded external tensor product of graded correspondences
also fits into our framework.
\begin{ex}\label{GradedPorductCorr}
Let $A$ and $B$ be $\mathbb Z_2$-graded $C^*$ algebras and let $X$
and $Y$ be $\mathbb Z_2$-graded correspondences over $A$ and $B$
(i.e. $X$ and $Y$ are graded as Hilbert $A$- and $B$-modules respectively
and the left action maps $\phi_X$ and $\phi_Y$ are graded with respect
to the induced gradings on $\mathcal L(X)$ and $\mathcal L(Y)$).
Let $X^0=X_0\cup X_1$, $A^0=A_0\cup A_1$, $Y^0=Y_0\cup Y_1$ and $B^0
=B_0\cup B_1$. Then by Corollary \ref{GeneratingSetCoCrossed} $(X^0,A^0)$
and $(Y^0,B^0)$ are generating sets for $X$ and $Y$ respectively.
Consider the graded external tensor product $X\widehat\otimes Y$.
This is the closure of the algebraic tensor product $X\odot Y$
with respect to the norm associated to the inner product whose value
on generators is given by:
$$\langle x_1\widehat\otimes y_1,x_2\widehat\otimes y_2\rangle=(-1)
^{\partial y_1(\partial x_1+\partial x_2)}\langle x_1,x_2\rangle\widehat\otimes
\langle y_1,y_2\rangle$$
where $x_1,x_2\in X_0$ and $y_1,y_2\in Y_0$.
The left and right actions are given by:
\begin{align*}
(a\widehat\otimes b)(x\widehat\otimes y)&=(-1)^{\partial b\partial x}(ax
\widehat\otimes by)\\
(x\widehat\otimes y)(a\widehat\otimes b)&=(-1)^{\partial y\partial a}(xa
\widehat\otimes yb)
\end{align*} 
for $a\in A_0$, $b\in B_0$, $x\in X_0$, and $y\in Y_0$.
Thus
\begin{align*}
(A\widehat\otimes B)_0&:=\{a\widehat\otimes b:a\in A_0,b\in B_0\}\\
(X\widehat\otimes Y)_0&:=\{x\widehat\otimes y:x\in X_0,y\in Y_0\}, 
\end{align*}
is a generating system for $X\widehat\otimes Y$.
Now, let $(\gamma,\alpha)$ be the action of $\mathbb Z_2$ on $(X,A)$ 
associated to the grading of $X$ and let $(\sigma,\delta)$ be the coaction
of $\mathbb Z_2$ on $(Y,B)$ associated to the grading of $Y$. Consider the
associated twisted tensor product $X\boxtimes Y$. Note that the sets
$X_0$ and $A_0$ are stable under the actions $\gamma$ and $\alpha$ and that
$Y_0$ and $B_0$ consist of elements which are homogeneous with respect to
the gradings associated to $\sigma$ and $\delta$. Thus, by Corollary
\ref{TwistedGeneratingSystem} the sets
\begin{align*}
(X\boxtimes Y)_0&:=\{x\boxtimes y:x\in X_0,y\in Y_0\}\\
(A\boxtimes B)_0&:=\{a\boxtimes b:a\in A_0,b\in B_0\}
\end{align*}
form a generating system for $X\boxtimes Y$. Let
$\Phi_0:(X\widehat\otimes Y)_0\to (X\boxtimes Y)_0$
be the map $x\widehat\otimes y\mapsto x\boxtimes y$. This is clearly a
bijection. Let
$\varphi:A\widehat\otimes B\to A\tensor[_\alpha]\boxtimes{_\delta}B$
be the isomorphism described above. 
For $a\in A_0$, $b\in B_0$, $x\in X_0$, and $y\in Y_0$, we have:
\begin{align*}
\Phi_0\big((a\widehat\otimes b)(x\widehat\otimes y)\big)&=(-1)^{\partial
b\partial x}\Phi_0(ax\widehat\otimes by)\\
&=(-1)^{\partial b\partial x}(ax\boxtimes by)\\
&=a\gamma_{\partial b}(x)\boxtimes by\\
&=(a\boxtimes b)(x\boxtimes y)\\
&=\varphi(a\widehat\otimes b)\Phi_0(x\widehat\otimes y)
\end{align*}
and similarly
$$\Phi_0\big((x\widehat\otimes y)(a\widehat\otimes b)\big)=
\Phi_0(x\widehat\otimes y)\varphi(a\widehat\otimes b)$$
therefore $(\Phi_0,\varphi)$ preserves the left and right actions. Additionally,
for $x_1,x_2\in X_0$ and $y_1,y_2\in Y_0$, we have that:
\begin{align*}
\big\langle\Phi_0(x_1\widehat\otimes y_1),\Phi_0(x_2\widehat\otimes
y_2)\big\rangle&=\langle x_1\boxtimes y_1,x_2\boxtimes y_2\rangle\\
&=\alpha_{\partial y_1}\big(\langle x_1,x_2\rangle\big)\boxtimes\langle
y_1,y_2\rangle\big)\\
&=(-1)^{\partial y_1(\partial x_1+\partial x_2)}\big(\langle x_1,x_2\rangle
\boxtimes\langle y_1,y_2\rangle\big) \\
&=(-1)^{\partial y_1(\partial x_1+\partial x_2)}\varphi\big(\langle x_1,
x_2\rangle\widehat\otimes\langle y_1,y_2\rangle\big) \\
&=\varphi\big(\langle x_1\widehat\otimes x_2,y_1\widehat\otimes y_2\rangle
\big)
\end{align*}
therefore, by Lemma \ref{IsomLemma}, $\Phi_0$ extends to an isomorphism 
$\Phi:X\widehat\otimes Y\to X\boxtimes Y$. Thus $X\widehat\otimes Y\cong
X\boxtimes Y$.
\end{ex}
The following example can be viewed as a generalization of the 
skew graph construction presented in Chapter 6 of \cite{GraphAlgebraBook}.
\begin{ex}\label{GraphProductCorr}
Let $E=\{E^0,E^1,r_E,s_E\}$ and $F=\{F^0,F^1,r_F,s_F\}$ 
be directed graphs and let $G$ be a discrete group. Let $A:=c_0(E^0)$
and let $B:=c_0(F^0)$. 
Let $\alpha^E$ be an action of $G$ on $E$ by graph automorphisms and
let $\delta$ be a $G$-labeling of $F$, i.e. a map $\delta:F^1\to G$.
It is easy to see that the maps $f\mapsto f\circ\alpha^E_{s}$ on $A$
together with the maps $x\mapsto x\circ\alpha^G_s$ on $c_c(E^1)$ give rise
to group homomorphisms $G\to\text{Aut}(A)$ 
and $G\to\text{Aut}\big(c_c(E^1)\big)$ which in turn give rise to a group
action $(\gamma,\alpha)$ on $(X(E),A)$.
We also get a coaction $\sigma$ of $G$ on $X(F)$ from $\delta$. 
To see this,
recall that $c_c(F^1)$ is generated by the characteristic functions $\chi_e$
and define $\sigma(\chi_f):=\chi_f\otimes u_{\delta(f)}$. So $(\sigma,\iota)$
is a coaction on $\big(X(F),B\big)$ where $\iota$ is the 
trivial coaction on $B$.
We define a new directed graph
$E\tensor[_{\alpha^E}]\times{_\delta}F:=\{E^0\times F^0,E^1\times F^1,
r,s\}$ where $s(e\times f)=\alpha^E_{\delta(f)}\big(s_E(e)\big)\times s_F(f)$
and $r(e\times f)=r_E(e)\times r_F(f)$. We define $C:=c_0(E^0\times F^0)
\cong A\otimes B$.
We will show that $X\big(E\tensor
[_{\alpha^E}]\times{_\delta}F\big)\cong X(E)\tensor[_\gamma]\boxtimes{_\sigma}X(F)$.

Let $X(E)^0$ be the set of characteristic functions on $E^1$ and let
$A^0$ be the set of characteristic functions on $E^0$.
The characteristic functions densely span $c_c(E^1)$ which
is dense in $X(E)$. 
For $\chi_v\in A^0$ and $\chi_e\in X(E)^0$ we have that 
\begin{align*}
\chi_v\cdot\chi_e&=
\begin{cases}\chi_e & \text{if }r(e)=v \\
0 & \text{otherwise} \\
\end{cases}\in X(E)^0\\ 
\chi_e\cdot\chi_v&=
\begin{cases}
\chi_e & \text{if }s(e)=v\\
0 & \text{otherwise}
\end{cases}\in X(E)^0\\
\langle\chi_e,\chi_{e'}\rangle_A&=\begin{cases}
\chi_{s(e)} & \text{if }e=e'\\
0 & \text{otherwise}
\end{cases}
\end{align*}
therefore $\big(X(E)^0,A^0\big)$ 
is a generating system for $X(E)$. We define $\big(X(F)^0,B^0\big)$
and $\big(X\big(E\tensor[_{\alpha^E}]\times{_\delta}F\big)^0,C^0\big)$
is an analogous way, and we see that they are generating systems for 
$X(F)$ and $X(E\tensor[_{\alpha^E}]\times{_\delta}F)$ respectively.
Further, from the definition of the coaction $\sigma$ we have that
all characteristic functions on $F^1$ are homogeneous with respect to the
grading induced by $\sigma$ and since $\iota$ is trivial, the grading it
induces is also trivial so $A^0$ is trivially homogeneous. Also, the actions
$\alpha$ and $\gamma$ take generating functions to generating functions:
$\alpha_s:\chi_v\to\chi_{\alpha^E_{s^{-1}}(v)}$, $\gamma_s:\chi_e\to\chi_{\alpha^E_
  {s^{-1}}(e)}$ so the sets $X(E)^0$ and $A^0$ are fixed by the actions. This allows
us to apply Corollary \ref{TwistedGeneratingSystem} and deduce that the sets
\begin{align*}
\big(X(E)\boxtimes X(F)\big)^0&=\{\chi_e\boxtimes\chi_f:e\in E^1,f\in F^1\}\\
(A\otimes B)^0&=\{\chi_v\otimes\chi_w:v\in E^0,w\in F^0\}
\end{align*}
form a generating system for the twisted tensor product $X(E)\tensor
[_\gamma]\boxtimes{_\sigma}X(F)$. 

Let $\varphi:C\to A\otimes B$ and let
$\Phi_0:X\big(E\tensor[_{\alpha^E}]\times{_\delta}F\big)^0\to\big(X(E)\boxtimes
X(F)\big)^0$ be the map $\chi_{e\times f}\mapsto\chi_e\boxtimes\chi_f$.
Clearly $\Phi_0$ is bijective, we wish to show that it preserves the
inner product and left and right actions. Note that
\begin{align*}
\varphi\big(\big\langle\chi_{e\times f},\chi_{e'\times f'}\big\rangle_C
\big)&=\begin{cases}
\varphi\big(\chi_{s(e\times f)}\big) & \text{if }e\times f=e'\times f'\\
\phi(0) & \text{otherwise}
\end{cases}\\
&=\begin{cases}
\chi_{\alpha^E_{\delta(f)}(s_E(e))}\otimes\chi_{s_F(f)} & \text{if $e=e'$ and
$f=f'$}\\
0 & \text{otherwise}
\end{cases}\\
&=\begin{cases}
\alpha_{\delta(f)^{-1}}(\chi_{s_E(e)})\otimes\chi_{s_F(f)}&\text{if $e=e'$
and $f=f'$}\\
0 & \text{otherwise}
\end{cases}\\
&=\alpha_{\delta(f)^{-1}}\big(\langle\chi_e,\chi_{e'}\rangle_A\big)\boxtimes
\langle\chi_f,\chi_{f'}\rangle_B\\
&=\big\langle \chi_e\boxtimes\chi_f,\chi_{e'}\boxtimes\chi_{f'}\big\rangle
_{A\otimes B}\\
&=\big\langle\Phi_0(\chi_{e\times f}),\Phi_0(\chi_{e'\times f'})\big\rangle
_{A\otimes B}
\end{align*}
and
\begin{align*}
\Phi_0(\chi_{v\times w}\cdot\chi_{e\times f})&=\begin{cases}
\Phi_0(\chi_{e\times f})&\text{if }r(e\times f)=v\times w\\
\Phi_0(0) & \text{otherwise}
\end{cases}\\
&=\begin{cases}
\chi_e\boxtimes\chi_f & \text{if $r_E(e)=v$
and $r_F(f)=w$}\\
0&\text{otherwise}
\end{cases}\\
&=\big(\chi_v\cdot\chi_e)\boxtimes(\chi_w\cdot\chi_f)\\
&=\big(\chi_v\otimes\chi_w\big)\big(\chi_e\boxtimes\chi_f\big)\\
&=\varphi(\chi_{v\times w})\Phi_0(\chi_{e\times f})
\end{align*}
and finally
\begin{align*}
\Phi_0(\chi_{e\times f}\cdot\chi_{v\times w})&=\begin{cases}
\Phi_0(\chi_{e\times f})&\text{if }s(e\times f)=v\times w\\
\Phi_0(0)&\text{otherwise}
\end{cases}\\
&=\begin{cases}
\chi_e\boxtimes\chi_f&\text{if $\alpha^E_{\delta(f)}(s_E(e))=v$ and $s_F(f)=w$}\\
0&\text{otherwise}
\end{cases}\\
&=\begin{cases}
\chi_e\boxtimes\chi_f&\text{if $s_E(e)=\alpha^E_{\delta(f)^{-1}}(v)$ and
$s_F(f)=w$}\\
0&\text{otherwise}
\end{cases}\\
&=\chi_e\cdot\chi_{\alpha^E_{\delta(f)^{-1}}(v)}\boxtimes\chi_f\cdot\chi_w\\
&=\chi_e\cdot\alpha_{\delta(f)}(\chi_v)\boxtimes\chi_f\cdot\chi_w\\
&=(\chi_e\boxtimes\chi_f)(\chi_v\boxtimes\chi_w)\\
&=\Phi_0(\chi_{e\times f})\varphi(\chi_{v\times w})
\end{align*}
Therefore, by Lemma \ref{IsomLemma} we have that $\Phi_0$ extends
to a correspondence isomorphism $\Phi:X(E\tensor[_{\alpha^E}]\times{_\delta}F)
\to X(E)\tensor[_\gamma]\boxtimes{_\sigma} X(F)$.
\end{ex}

\begin{ex}\label{ActionCrossedIsTwistedCorr}
In this example we will show that the crossed product of a correspondence
by an action of a discrete group can be viewed as a discrete group twisted
tensor product of correspondences. Suppose $(\gamma,\alpha)$ is and action
of a discrete group $G$ on a $C^*$-correspondence $(X,A)$. We wish to show
that the reduced crossed product $X\rtimes_{\gamma,r}G$ is isomorphic
to the twisted tensor product $X\tensor[_\gamma]\boxtimes{_{\delta_G}}C_r^*(G)$
where we view $C_r^*(G)$ as a correspondence over itself. 

Recall from Corollary \ref{GeneratingSystemCrossed} that the sets 
\begin{align*}
(X\rtimes_{\gamma,r}G)_0&:=\{i_X(x)i_G^X(s):x\in X, s\in G\}\\
(A\rtimes_{\alpha,r}G)_0&:=\{i_A(a)i_G^A(s):a\in A,s\in G\}
\end{align*}
form a generating system for $X\rtimes_{\gamma,r}G$. Let $C^*_r(G)_0=\{u_s:s\in
G\}$, i.e. the image of $G$ in $C^*_r(G)$. This set is closed under multiplication,
thus we may regard $(C^*_r(G)_0,C^*_r(G)_0)$ as a generating system for the
correspondence $C^*_r(G)$. Furthermore, every element of $C^*_r(G)_0$ is
homogeneous with respect to the grading arising from $\delta_G:u_s\mapsto
u_s\otimes u_s$. Also, we may view $(X,A)$ as a generating system for itself
and then by Corollary \ref{TwistedGeneratingSystem} we see that the sets
\begin{align*}
(X\tensor[_\gamma]\boxtimes{_{\delta_G}}C^*_r(G))_0:&=\{x\boxtimes u_s:x\in
X,s\in G\}\\
(A\tensor[_\alpha]\boxtimes{_{\delta_G}}C^*_r(G))_0:&=\{a\boxtimes u_s:a\in
A,s\in G\}
\end{align*}
form a generating system for $X\tensor[_\gamma]\boxtimes{_{\delta_G}}C^*_r(G)$.
We let $\varphi$ be the isomorphism $A\rtimes_{\alpha,r}G\to
A\tensor[_\alpha]\boxtimes{_{\delta_G}}C^*_r(G)$ and define $\Phi_0:
(X\rtimes_{\gamma,r}G)_0\to (X\tensor[_\gamma]\boxtimes{_{\delta_G}}C^*_r(G))_0$
to be the map $i_X(x)i_G^X(s)\mapsto x\boxtimes u_s$. This is clearly a bijection,
but we need to establish that it preserves the inner product and left and
right actions. First, note that
\begin{align*}
\big\langle\Phi_0(i_X(x)i_G^X(s)),\Phi_0(i_X(y)i_G^X(t))\big\rangle
&=\langle x\boxtimes u_s,y\boxtimes u_t\rangle\\
&=\alpha_{s^{-1}}\big(\langle x,y\rangle\big)\boxtimes\langle u_s,u_t\rangle\\
&=\alpha_{s^{-1}}\big(\langle x,y\rangle\big)\boxtimes u_{s^{-1}t}\\
&=\varphi\Big(i_A\big(\alpha_{s^{-1}}\big(\langle x,y\rangle\big)\big)i_G^A(s^{-1}t)
\Big)\\
&=\varphi\Big(\big\langle i_X(x)i_G^X(s),i_X(y)i_G^X(t)\big\rangle\Big)
\end{align*}
also,
\begin{align*}
\Phi_0\Big(\big(i_X(x)i_G^X(s)\big)\big(i_A(a)i_G^A(t)\big)\Big)&=
\Phi_0\big(i_X\big(x\alpha_s(a)\big)i_G^X(st)\big)\\
&=x\alpha_s(a)\boxtimes u_{st}\\
&=x\alpha_s(a)\boxtimes u_su_t\\
&=(x\boxtimes u_s)(a\boxtimes u_t)\\
&=\Phi_0\big(i_X(x)i_G^X(s)\big)\varphi\big(i_A(a)i_G^A(t)\big)
\end{align*}
and finally,
\begin{align*}
\Phi_0\Big(\big(i_A(a)i_G^A(s)\big)\big(i_X(x)i_G^X(t)\big)\Big)&=
\Phi_0\big(i_X\big(a\gamma_s(x)\big)i_G^X(st)\big)\\
&=a\gamma_s(x)\boxtimes u_{st}\\
&=a\gamma_s(x)\boxtimes u_su_t\\
&=(a\boxtimes u_s)(x\boxtimes u_t)\\
&=\varphi\big(i_A(a)i_G^A(s)\big)\Phi_0\big(i_X(x)i_G^X(t)\big)
\end{align*}
Therefore, by Lemma \ref{IsomLemma}, we have that $\Phi_0$ extends to a
correspondence isomorphism $\Phi:X\rtimes_{\gamma,r}G\to X\tensor[_\gamma]
\boxtimes{_{\delta_G}}C^*_r(G)$.
\end{ex}
\begin{ex}\label{CoactionCrossedIsTwistedCorr}
In this example we will see that crossed products by coactions
on correspondences can also be viewed as twisted tensor products.
Specifically, if $(\sigma,\delta)$ is a coaction of a discrete group $G$
on a correspondence $(X,A)$, then we wish to show that $X\rtimes_\sigma G$
is isomorphic to $X\tensor[_\sigma]\boxtimes{_\lambda} c_0(G)$
where we are viewing $c_0(G)$ as a correspondence over itself. To see this,
recall from Corollary \ref{GeneratingSetCoCrossed} that the sets
\begin{align*}
(X\rtimes_\sigma G)_0&:=\{j_X(x_s)j_G^X(f):x_s\in X_s,f\in c_0(G)\}\\
(A\rtimes_\delta G)_0&:=\{j_A(a_s)j_G^A(f):a_s\in A_s,g\in c_0(G)\}
\end{align*}
form a generating system for $X\rtimes_\sigma G$.  Let $X^0:=\bigcup_{s\in
G}X_s$ and let $A^0:=\bigcup_{s\in G}A_s$. The properties of the grading
tell us that $X^0$ and $A^0$ densely span $X$ and $A$ and that $ax,xa\in
X^0$ whenever $a\in A^0$ and $x\in X^0$. Thus $(X^0,A^0)$ is a generating
system for $X$ which by definition consists of elements which are homogeneous
with respect to the gradings of $X$ and $A$. Viewing $(c_0(G),c_0(G))$ as
a generating system for the correspondence $c_0(G)$, we may apply Corollary
\ref{TwistedGeneratingSystem} and deduce that the sets
\begin{align*}
(X\tensor[_\sigma]\boxtimes{_\lambda}c_0(G))_0&:=\{x_s\boxtimes f:x_s\in
X^0,f\in c_0(G)\}\\
(A\tensor[_\alpha]\boxtimes{_\lambda}c_0(G))_0&:=\{a_s\boxtimes f:a_s\in
A^0,f\in c_0(G)\}
\end{align*}
form a generating system for the twisted tensor product $X\tensor[_\sigma]\boxtimes
{_\lambda}c_0(G)$. Let $\varphi$ be the isomorphism $A\rtimes_\delta G\to
A\tensor[_\delta]\boxtimes{_\lambda}c_0(G)$. We define $\Phi_0:(X\rtimes_\sigma
G)_0\to(X\tensor[_\sigma]\boxtimes{_\lambda}c_0(G))_0$ to be the map
$j_X(x_s)j_G^X(f)\mapsto x_s\boxtimes f$. This is clearly bijective, but
we must show that it preserves the inner product and left and right actions.
To see this, suppose $x_s\in X_s\subseteq X^0$ and $x_t\in X_t\subseteq X^0$
and note that
\begin{align*}
\big\langle\Phi_0\big((j_X(x_s)j_G^X(f)\big),\Phi_0\big((j_X(x_t)j_G^X(g)
\big)\big\rangle&=\langle x_s\boxtimes f,x_t\boxtimes g\rangle\\
&=\langle x_s,x_t\rangle\boxtimes\lambda_{s^{-1}}\big(\langle f,g\rangle\big)\\
&=\langle x_s,x_t\rangle\boxtimes\lambda_{s^{-1}}(\overline fg)\\
&=\varphi\Big(j_A\big(\langle x_s,x_t\rangle\big)j_A^G\big(\lambda_
{s^{-1}}(\overline fg)\big)\Big)\\
&=\varphi\Big(\big\langle j_X(x_s)j_G^X(f),j_X(x_t)j_G^X(g)\big\rangle\Big)
\end{align*}
furthermore, if $a_t\in A_t\subseteq A^0$ we have that
\begin{align*}
\Phi_0\Big(\big(j_X(x_s)j_G^X(f)\big)\big(j_X(a_t)j_G^X(g)\big)\Big)&=
\Phi_0\Big(j_X(x_sa_t)j_G^X\big(\lambda_t(f)g\big)\Big)\\
&=x_sa_t\boxtimes\lambda_t(f)g\\
&=(x_s\boxtimes f)(x_t\boxtimes g)\\
&=\Phi_0\big(j_X(x_s)j_G^X(f)\big)\varphi\big(j_A(a_t)j_G^A(g)\big)
\end{align*}
and
\begin{align*}
\Phi_0\Big(\big(j_A(a_t)j_G^A(f)\big)\big(j_X(x_s)j_G^X(g)\big)\Big)&=\Phi_0
\Big(j_X(a_tx_s)j_G^X\big(\lambda_s(f)g\big)\Big)\\
&=a_tx_s\boxtimes\lambda_s(f)g\\
&=(a_t\boxtimes f)(x_s\boxtimes g)\\
&=\varphi\big(j_A(a_t)j_G^A(f)\big)\Phi_0\big(j_X(x_s)j_G^X(g)\big)
\end{align*}
Therefore, by Lemma \ref{IsomLemma}, we have that $\Phi_0$ extends to
a correspondence isomorphism $\Phi:X\rtimes_\sigma G\to X\tensor[_\sigma]
\boxtimes{_\lambda}c_0(G)$.
\end{ex}
\section{Balanced Twisted Tensor Products}
Throughout this section, $G$ will be a discrete group, $Z$ will be a compact
abelian
group, $(A,G,\alpha)$, $(A,Z,\mu)$ and $(B,Z,\nu)$ will be dynamical systems
and $(B,G,\delta)$ will be a coaction such that $\mu$ commutes with $\alpha$
and $\nu$ is covariant with respect to $\delta$.
\begin{prop}
$(A\tensor[_\alpha]\boxtimes{_\delta}B,Z,\lambda)$ is a dynamical system
where $$\lambda_z=\mu_z\boxtimes\nu_{z^{-1}}$$
\end{prop}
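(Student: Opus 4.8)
The plan is to establish the three conditions that make $(A\tensor[_\alpha]\boxtimes{_\delta}B,Z,\lambda)$ a $C^*$-dynamical system: that each $\lambda_z=\mu_z\boxtimes\nu_{z^{-1}}$ is a $*$-automorphism of $A\boxtimes B$, that $z\mapsto\lambda_z$ is a group homomorphism into $\mathrm{Aut}(A\boxtimes B)$, and that $z\mapsto\lambda_z(c)$ is continuous for every $c$. Throughout, $\lambda_z$ is understood as the map determined on the homogeneous generators of Corollary~\ref{TwistedGeneratingSystem} by $a\boxtimes b_s\mapsto\mu_z(a)\boxtimes\nu_{z^{-1}}(b_s)$.

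For the automorphism property I would realize $\lambda_z$ as the restriction of the spatial-tensor-product automorphism $\Lambda_z:=\mu_z\otimes\nu_{z^{-1}}\otimes\mathrm{id}_{\mathcal K(\mathcal H)}$ of $A\otimes B\otimes\mathcal K(\mathcal H)$, with $\mathcal K(\mathcal H)$ as in Definition~\ref{TPdef}. The essential point is that the two hypotheses amount to equivariance of the factor automorphisms for the coactions defining $i_A$ and $i_B$: since $\mu$ commutes with $\alpha$, the automorphism $\mu_z$ is equivariant for $\delta^\alpha$, i.e. $(\mu_z\otimes\mathrm{id})\circ\delta^\alpha=\delta^\alpha\circ\mu_z$; and since $\nu$ is covariant for $\delta$, we have $(\nu_{z^{-1}}\otimes\mathrm{id})\circ\delta=\delta\circ\nu_{z^{-1}}$. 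From the formulas $i_A(a)=(\mathrm{id}_A\otimes\pi)\delta^\alpha(a)_{13}$ and $i_B(b)=(\mathrm{id}_B\otimes\rho)\delta(b)_{23}$, these intertwining relations give $\Lambda_z(i_A(a))=i_A(\mu_z(a))$ and $\Lambda_z(i_B(b))=i_B(\nu_{z^{-1}}(b))$, so $\Lambda_z$ carries $A\boxtimes B=i_A(A)\cdot i_B(B)$ onto itself and restricts to the map $a\boxtimes b\mapsto\mu_z(a)\boxtimes\nu_{z^{-1}}(b)$. Being the restriction of a $*$-automorphism to an invariant $C^*$-subalgebra, $\lambda_z$ is automatically a $*$-homomorphism, and its inverse is the restriction of $\Lambda_z^{-1}=\mu_{z^{-1}}\otimes\nu_z\otimes\mathrm{id}$, so $\lambda_z\in\mathrm{Aut}(A\boxtimes B)$. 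Alternatively, one may verify directly on the generating system that $\lambda_z$ preserves the products and adjoints of Proposition~\ref{Formulas} (here covariance is exactly what guarantees $\nu_{z^{-1}}(b_s)\in B_s$, so that the multiplication and involution formulas still apply) and then extend in the spirit of Lemma~\ref{IsomLemma}; the realization as a restriction of $\Lambda_z$ is cleaner because it supplies boundedness and invertibility for free.

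The homomorphism property is then immediate from the factorwise identity $\lambda_z\lambda_w=\mu_{zw}\boxtimes\nu_{z^{-1}w^{-1}}$ together with $\lambda_{zw}=\mu_{zw}\boxtimes\nu_{(zw)^{-1}}$; these agree precisely because $Z$ is abelian, so that $z^{-1}w^{-1}=(zw)^{-1}$, and clearly $\lambda_e=\mathrm{id}\boxtimes\mathrm{id}=\mathrm{id}$. For strong continuity, strong continuity of $\mu$ and $\nu$ makes $z\mapsto\mu_z(a)$ and $z\mapsto\nu_{z^{-1}}(b)$ norm-continuous; continuity of $i_A$, $i_B$ and of multiplication in $A\otimes B\otimes\mathcal K(\mathcal H)$ then yields continuity of $z\mapsto\lambda_z(a\boxtimes b)$ on generators, and since each $\lambda_z$ is isometric and the generators span a dense subspace, a standard $\varepsilon/3$ approximation promotes this to continuity of $z\mapsto\lambda_z(c)$ for all $c\in A\boxtimes B$.

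The main obstacle is the first step, namely checking that $\lambda_z$ is a well-defined automorphism compatible with the twist, and within it the crux is correctly matching the hypotheses that $\mu$ commutes with $\alpha$ and that $\nu$ is covariant for $\delta$ to the equivariance of $\mu_z$ and $\nu_{z^{-1}}$ under $\delta^\alpha$ and $\delta$. Once that identification is made, invariance of the twisted tensor product under $\Lambda_z$, and hence the automorphism property, follow formally, while the homomorphism and continuity conditions are routine.
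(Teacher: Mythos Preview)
Your proof is correct and follows essentially the same approach as the paper: both use the hypotheses that $\mu$ commutes with $\alpha$ and $\nu$ is $\delta$-covariant to obtain the automorphism property, and then verify the homomorphism identity $\lambda_z\lambda_w=\lambda_{zw}$ using abelianness of $Z$. Your argument is in fact more complete, since the paper simply asserts that each $\lambda_z$ is an automorphism, whereas you supply the concrete realization as the restriction of $\mu_z\otimes\nu_{z^{-1}}\otimes\mathrm{id}$ and also address strong continuity, which the paper omits.
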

\begin{proof}
Since $\mu$ commutes with $\alpha$ and $\nu$ is covariant with respect
to $\delta$, we know that each map $\lambda_z$ is an automorphism. To
show that $z\mapsto\lambda_z$ is a group homomorphism, note that 
\begin{align*}
\lambda_z\circ\lambda_w(a\boxtimes b)&=\lambda_z\big(\mu_w(a)
\boxtimes\nu_{w^{-1}}(b)\big)\\
&=\mu_z\circ\mu_w(a)\boxtimes\nu_{z^{-1}}\circ\nu_{w^{-1}}(b)\\
&=\mu_{zw}(a)\boxtimes\nu_{z^{-1}w^{-1}}(b)\\
&=\mu_{zw}(a)\boxtimes\nu_{w^{-1}z^{-1}}(b)\\
&=\mu_{zw}(a)\boxtimes\nu_{(zw)^{-1}}(b)\\
&=\lambda_{zw}(a\boxtimes b)
\end{align*}
\end{proof}
\begin{dfn}
We call the fixed point algebra $(A\boxtimes B)^\lambda$ of the above
action the \emph{$Z$-balanced twisted tensor product} of $A$ and $B$
and we denote it by $A\boxtimes_ZB$.
\end{dfn}
\begin{prop}
Let $a\boxtimes b\in A\boxtimes_ZB$. Then $\mu_z(a)\boxtimes b,
a\boxtimes\nu_z(b)\in A\boxtimes_ZB$ for all $z\in Z$ and, in fact,
$\mu_z(a)\boxtimes b=a\boxtimes\nu_z(b)$ for all $z\in Z$.
\end{prop}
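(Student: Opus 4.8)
The plan is to realize $\mu_z\boxtimes\text{id}_B$ and $\text{id}_A\boxtimes\nu_z$ as commuting automorphisms of $A\boxtimes B$ and then transport the $\lambda$-invariance of $a\boxtimes b$ through them. First I would verify that the assignments $a\boxtimes b\mapsto\mu_z(a)\boxtimes b$ and $a\boxtimes b\mapsto a\boxtimes\nu_z(b)$ extend to $*$-automorphisms of $A\tensor[_\alpha]\boxtimes{_\delta}B$. Using the formulas of Proposition \ref{Formulas}, for $b_s\in B_s$ one has $(a\boxtimes b_s)(a'\boxtimes b')=a\alpha_s(a')\boxtimes b_sb'$; applying $a\boxtimes b\mapsto\mu_z(a)\boxtimes b$ to each side and comparing reduces exactly to the identity $\mu_z\alpha_s=\alpha_s\mu_z$, which is the hypothesis that $\mu$ commutes with $\alpha$, and the involution formula is handled the same way. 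For $\text{id}_A\boxtimes\nu_z$, covariance of $\nu$ with respect to $\delta$ forces $\nu_z(B_s)\subseteq B_s$, so $\nu_z$ respects the $G$-grading and the same formulas are preserved. Well-definedness on the dense span of elementary tensors and continuity follow as in Lemma \ref{IsomLemma}, and invertibility is witnessed by $\mu_{z^{-1}}\boxtimes\text{id}_B$ and $\text{id}_A\boxtimes\nu_{z^{-1}}$; I would also record the factorization $\lambda_z=(\mu_z\boxtimes\text{id}_B)\circ(\text{id}_A\boxtimes\nu_{z^{-1}})$.

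With these automorphisms in hand, the claimed identity is immediate. Since $a\boxtimes b$ lies in the fixed-point algebra $(A\boxtimes B)^\lambda$, we have $\lambda_z(a\boxtimes b)=a\boxtimes b$, that is $\mu_z(a)\boxtimes\nu_{z^{-1}}(b)=a\boxtimes b$. Applying the automorphism $\text{id}_A\boxtimes\nu_z$ to both sides sends the left-hand side to $\mu_z(a)\boxtimes\nu_z\nu_{z^{-1}}(b)=\mu_z(a)\boxtimes b$ and the right-hand side to $a\boxtimes\nu_z(b)$, giving $\mu_z(a)\boxtimes b=a\boxtimes\nu_z(b)$ at once.

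For membership I would show that $\mu_z\boxtimes\text{id}_B$ preserves $(A\boxtimes B)^\lambda$ by checking that it commutes with every $\lambda_w$. From the factorization above, $\mu_z\boxtimes\text{id}_B$ commutes with $\mu_w\boxtimes\text{id}_B$ because $Z$ is abelian and with $\text{id}_A\boxtimes\nu_{w^{-1}}$ because these act on complementary legs (a one-line check on elementary tensors), so it commutes with $\lambda_w$ and hence carries $\lambda$-fixed points to $\lambda$-fixed points. Therefore $\mu_z(a)\boxtimes b=(\mu_z\boxtimes\text{id}_B)(a\boxtimes b)\in A\boxtimes_ZB$, and $a\boxtimes\nu_z(b)\in A\boxtimes_ZB$ either by the symmetric argument with $\text{id}_A\boxtimes\nu_z$ or simply because it equals $\mu_z(a)\boxtimes b$ by the previous paragraph. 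The only real work is the first step: unlike $\lambda_z$, whose definition already blends both legs, the one-sided maps $\mu_z\boxtimes\text{id}_B$ and $\text{id}_A\boxtimes\nu_z$ are well defined only because of the compatibility hypotheses ($\mu\alpha=\alpha\mu$ and the $\delta$-covariance of $\nu$), and that is precisely where those assumptions are used; once the automorphisms exist, the two assertions are short formal consequences.
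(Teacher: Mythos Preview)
Your proof is correct and follows essentially the same approach as the paper: both rely on the one-leg maps $\mu_z\boxtimes\text{id}_B$ and $\text{id}_A\boxtimes\nu_z$ commuting with $\lambda$, and both derive the equality by unwinding $\lambda$-invariance. The only difference is organizational: you first promote $\mu_z\boxtimes\text{id}_B$ and $\text{id}_A\boxtimes\nu_z$ to honest automorphisms (which the paper uses tacitly in the step $(\mu_z\boxtimes\text{id}_B)(\lambda_w(a\boxtimes b))=(\mu_z\boxtimes\text{id}_B)(a\boxtimes b)$ without comment), and for the equality the paper applies $\lambda_{z^{-1}}$ to the already-fixed element $\mu_z(a)\boxtimes b$ whereas you apply $\text{id}_A\boxtimes\nu_z$ to the equation $\lambda_z(a\boxtimes b)=a\boxtimes b$---the same computation read two ways.
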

\begin{proof}
Let $w\in Z$. Note that
\begin{align*}
\lambda_w\big(\mu_z(a)\boxtimes b\big)&=\mu_w\circ\mu_z(a)\boxtimes 
\nu_{w^{-1}}(b)\\
&=\mu_{z}\circ\mu_w(a)\boxtimes\nu_{w^{-1}}(b)\\
&=(\mu_z\boxtimes\text{id}_B)\big(\lambda_w(a\boxtimes b)\big)\\
&=(\mu_z\boxtimes\text{id}_B)(a\boxtimes b)\\
&=\mu_z(a)\boxtimes b
\end{align*}
Thus $\mu_z(a)\boxtimes b$ is fixed under the action of $\lambda$ and
is therefore an element of $A\boxtimes_Z B$.
Showing that $a\boxtimes\nu_z(b)\in A\boxtimes_Z B$ is similar. To show that
these are actually equivalent, notice that
\begin{align*}
\mu_z(a)\boxtimes b&=\lambda_{z^{-1}}\big(\mu_z(a)\boxtimes b\big)\\
&=\mu_{z^{-1}}\circ\mu_z(a)\boxtimes\nu_z(b)\\
&=a\boxtimes\nu_z(b)
\end{align*}
\end{proof}
\begin{prop}\label{GradingOfProduct}
Keeping the above conventions, let $\delta^\mu$ and $\delta^\nu$ be
the dual coactions. Since $Z$ is compact, $\widehat Z$ will be discrete
and thus $\delta^\mu$ and $\delta^\nu$ give gradings $\{A_\chi\}_{\chi\in
\widehat Z}$ and $\{B_\chi\}_{\chi\in\widehat Z}$ of $A$ and $B$. For each
$\chi\in\widehat Z$ let 
$$S_\chi:=\{a\boxtimes b:a\in A_\chi,b\in B_\chi\}$$
and let $S:=\bigcup_{\chi\in\widehat Z}S_\chi$. Then $A\boxtimes_ZB=
\overline{\text{\emph{span}}}(S)$.
\begin{proof}
First we will show that $\overline{\text{span}}(S)\subseteq A\boxtimes_ZB$.
Let $a\boxtimes b\in S$. Then $a\boxtimes b\in S_\chi$
for some $\chi\in\widehat Z$. Thus for all $z\in Z$ we have that
\begin{align*}
\lambda_z(a\boxtimes b)&=\mu_z(a)\boxtimes \nu_{z^{-1}}(b)\\
&=\chi(z)a\boxtimes \chi(z^{-1})b\\
&=\chi(z)\chi(z^{-1})(a\boxtimes b)\\
&=a\boxtimes b
\end{align*}
thus $S\subseteq A\boxtimes_ZB$ and therefore $\overline{\text{span}}(S)
\subseteq A\boxtimes_ZB$.

To show the reverse inclusion, let $c\in A\boxtimes_Z B$. Then $c\approx\sum
a_i\boxtimes b_i$. Since the subspaces $\{A_\chi\}_{\chi\in\widehat Z}$ 
span $A$ and the subspaces $\{B_\chi\}_{\chi\in\widehat Z}$ span $B$, we
may assume without loss of generality that there are $\chi_i,\chi'_i\in
\widehat Z$ such that $a_i\in A_{\chi_i}$ and $b_i\in B_{\chi'_i}$ for
each $i$. Let $\Upsilon:A\boxtimes B\to A\boxtimes_{\mathbb T}B$ be the 
conditional expectation $d\mapsto \int_Z\lambda_z(d)dz$. Since $c$ is assumed
to be in $A\boxtimes_\mathbb T B$, we have that $c=\Upsilon(c)$. Thus, using
the continuity and linearity of $\Upsilon$ we have the following
\begin{align*}
c&=\Upsilon(c)\\&=\int_Z\lambda_z(c)dz\\
&\approx\int_Z\lambda_z\Big(\sum_i a_{\chi_i}\boxtimes b_{{\chi'}_i}\Big)dz\\
&=\sum_i\int_Z\lambda_z\big(a_{\chi_i}\boxtimes b_{{\chi'}_i})dz\\
&=\sum_i\int_Z\mu_z(a_{\chi_i})\boxtimes\nu_{z^{-1}}(b_{ {\chi'}_i})dz\\
&=\sum_i\int_Z\big(\chi_i(z)a_{\chi_i}\big)\boxtimes\big({\chi'}_i(z^{-1})b_{{\chi'}_i}\big)dz\\
&=\sum_i \big(a_{\chi_i}\boxtimes b_{ {\chi'}_i}\big)\int_Z\chi_i(z){\chi'}_i(z^{-1})dz\\
\end{align*}
But the integral $\int_Z\chi_i(z)\chi'_i(z^{-1})dz$ is equal to $1$ if $\chi_i=\chi'_i$
and zero otherwise (this is a consequence of the Peter-Weyl theorem). 
Thus $c$ can be approximated by a sum of elements
from $S$:$$c\approx\sum_i a_{\chi_i}\boxtimes b_{\chi_i}$$
therefore $A\boxtimes_\mathbb T B\subseteq\overline{\text{span}}(S)$
so $A\boxtimes_\mathbb T B=\overline{\text{span}}(S)$.
\end{proof}
\end{prop}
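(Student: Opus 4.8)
The plan is to prove the asserted equality by establishing the two inclusions separately, the first being a direct verification and the second requiring an averaging argument over $Z$.

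For the inclusion $\overline{\text{span}}(S)\subseteq A\boxtimes_Z B$, since $A\boxtimes_Z B=(A\boxtimes B)^\lambda$ is a closed subspace, it suffices to show each generator $a\boxtimes b\in S_\chi$ is fixed by every $\lambda_z$. I would recall that membership $a\in A_\chi$ in the spectral subspace attached to the dual coaction $\delta^\mu$ is equivalent to $\mu_z(a)=\chi(z)a$ for all $z\in Z$, and similarly $\nu_z(b)=\chi(z)b$ for $b\in B_\chi$. Using the formula $\lambda_z(a\boxtimes b)=\mu_z(a)\boxtimes\nu_{z^{-1}}(b)$ together with the bilinearity of $\boxtimes$, I would then compute $\lambda_z(a\boxtimes b)=\chi(z)\chi(z^{-1})(a\boxtimes b)=a\boxtimes b$, the last equality using $\chi(z)\chi(z^{-1})=1$. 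This shows $S\subseteq A\boxtimes_Z B$, and hence its closed span is contained as well.

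For the reverse inclusion I would take $c\in A\boxtimes_Z B$ and exploit that $c$ is fixed by the averaging conditional expectation $\Upsilon(d)=\int_Z\lambda_z(d)\,dz$ onto the fixed-point algebra, so that $c=\Upsilon(c)$; this map is well defined and bounded because $Z$ is compact and carries normalized Haar measure. Approximating $c$ by a finite sum $\sum_i a_i\boxtimes b_i$ and using that the spectral subspaces $\{A_\chi\}$ and $\{B_\chi\}$ densely span $A$ and $B$, I would arrange that each $a_i\in A_{\chi_i}$ and $b_i\in B_{\chi'_i}$ is homogeneous. Applying $\Upsilon$ termwise (justified by its continuity and linearity) and inserting the scalars $\mu_z(a_i)=\chi_i(z)a_i$ and $\nu_{z^{-1}}(b_i)=\chi'_i(z^{-1})b_i$, each term becomes $(a_i\boxtimes b_i)\int_Z\chi_i(z)\overline{\chi'_i(z)}\,dz$. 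The orthogonality relations for the characters of the compact abelian group $Z$ then force this integral to equal $1$ when $\chi_i=\chi'_i$ and $0$ otherwise, so $\Upsilon$ annihilates the off-diagonal terms and leaves a sum of diagonal elements lying in $\text{span}(S)$. Thus $c$ is approximated by elements of $\text{span}(S)$, giving $c\in\overline{\text{span}}(S)$.

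The main obstacle I anticipate is the careful bookkeeping in the second inclusion: ensuring the reduction to homogeneous summands is legitimate, that $\Upsilon$ is genuinely a bounded map fixing $A\boxtimes_Z B$, and that the interchange of the approximating limit with the integral defining $\Upsilon$ is valid. The character orthogonality itself, being a consequence of Peter--Weyl (equivalently Schur orthogonality for the abelian group $Z$), is standard, so the crux is really the approximation and continuity argument rather than any deep structural fact.
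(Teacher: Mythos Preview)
Your proposal is correct and follows essentially the same route as the paper: both inclusions are handled exactly as you describe, with the forward direction a direct spectral-subspace computation and the reverse direction obtained by applying the Haar-averaging expectation $\Upsilon(d)=\int_Z\lambda_z(d)\,dz$ to a homogeneous approximation and invoking character orthogonality (Peter--Weyl) to kill the off-diagonal terms. Even the potential subtleties you flag---boundedness of $\Upsilon$, legitimacy of reducing to homogeneous summands, and interchanging the limit with the integral---are precisely the points the paper glosses over with ``continuity and linearity of $\Upsilon$'', so your account is if anything slightly more careful.
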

\begin{prop}\label{BalancedAction}
Let $\{S_\chi\}_{\chi\in\widehat Z}$ be as above. This defines a coaction
of $\widehat Z$ on $A\boxtimes_ZB$ and thus also an action
$\gamma$ of $Z$  on $A\boxtimes_ZB$. We have that 
$\gamma=\mu\boxtimes\emph{id}_B=\emph{id}_A\boxtimes\nu$.
\end{prop}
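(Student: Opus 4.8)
The plan is to produce the action $\gamma$ of $Z$ directly and then recognize the grading $\{S_\chi\}$ as its spectral decomposition, which is precisely the asserted coaction of $\widehat Z$. Write $\mathcal S_\chi:=\overline{\text{span}}(S_\chi)$, so that $A\boxtimes_ZB=\overline{\text{span}}\{\mathcal S_\chi\}_{\chi\in\widehat Z}$ by Proposition \ref{GradingOfProduct}. The hypothesis that $\mu$ commutes with $\alpha$ says exactly that each $\mu_z$ is $G$-equivariant, and the hypothesis that $\nu$ is covariant with $\delta$ says exactly that each $\nu_z$ is equivariant for the $G$-coaction; hence $\mu_z\boxtimes\text{id}_B$ and $\text{id}_A\boxtimes\nu_z$ are well-defined automorphisms of $A\boxtimes B$ (one may either invoke functoriality of the twisted tensor product from \cite{Woronowicz}, or check directly that the assignments on elementary tensors respect the relations of Proposition \ref{Formulas}). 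By the preceding proposition these two automorphisms agree on $A\boxtimes_ZB$ and carry it into itself, so their common restriction defines a map $\gamma_z$ on $A\boxtimes_ZB$. That $z\mapsto\gamma_z$ is a strongly continuous homomorphism $Z\to\text{Aut}(A\boxtimes_ZB)$ then follows from the corresponding properties of $\mu$ (respectively $\nu$) by a routine density-and-uniform-boundedness argument, and by construction $\gamma=\mu\boxtimes\text{id}_B=\text{id}_A\boxtimes\nu$.

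The substantive step is to identify the spectral subspaces of $\gamma$. For $a\boxtimes b\in S_\chi$ we have $a\in A_\chi$ and $b\in B_\chi$, so $\mu_z(a)=\chi(z)a$ and $\nu_z(b)=\chi(z)b$, whence
\[
\gamma_z(a\boxtimes b)=\mu_z(a)\boxtimes b=\chi(z)\,(a\boxtimes b).
\]
Thus $\mathcal S_\chi$ lies in the $\chi$-spectral subspace of $\gamma$; applying the spectral projection $P_\chi=\int_Z\overline{\chi(z)}\,\gamma_z\,dz$ and using that $P_\chi$ annihilates $\mathcal S_{\chi'}$ for $\chi'\neq\chi$ while fixing $\mathcal S_\chi$, together with the dense span of Proposition \ref{GradingOfProduct}, shows that the spectral subspace of $\gamma$ at $\chi$ is exactly $\mathcal S_\chi$. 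Since $Z$ is compact abelian, $\widehat Z$ is discrete and the spectral decomposition of the $Z$-action $\gamma$ is precisely a $\widehat Z$-grading, equivalently a coaction of the discrete group $\widehat Z$ (cf. Proposition \ref{CoactionGradingAlgebra} and its converse in \cite{Quigg}); the faithful conditional expectation onto the unit fibre $\mathcal S_e$ is $c\mapsto\int_Z\gamma_z(c)\,dz$, so no integrability issue arises. This coaction of $\widehat Z$ is dual to the action $\gamma$, establishing both assertions of the proposition.

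To make the grading claim self-contained I would also verify the multiplicativity and involution axioms for $\{\mathcal S_\chi\}$ directly, and this is where the only real obstacle lies: Proposition \ref{Formulas} expresses the product in terms of the $G$-grading $\{B_s\}$ of $B$, whereas the $\mathcal S_\chi$ are built from the $\widehat Z$-grading $\{B_\chi\}$ coming from $\nu$. The two gradings are compatible: covariance of $\nu$ with $\delta$ gives $\nu_z(B_s)\subseteq B_s$, so each $B_s$ is $\nu$-invariant and is preserved by $P_\chi$, whence $B_\chi=\overline{\text{span}}\{B_s\cap B_\chi\}_{s\in G}$ is densely spanned by elements homogeneous for \emph{both} gradings; likewise $\mu\alpha=\alpha\mu$ gives $\alpha_s(A_\chi)\subseteq A_\chi$. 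Then for $a\in A_\chi$, $a'\in A_{\chi'}$, $b'\in B_{\chi'}$, and a doubly homogeneous $b_s\in B_s\cap B_\chi$, Proposition \ref{Formulas} yields
\[
(a\boxtimes b_s)(a'\boxtimes b')=a\,\alpha_s(a')\boxtimes b_s b',
\]
where $\alpha_s(a')\in A_{\chi'}$ forces $a\,\alpha_s(a')\in A_{\chi\chi'}$ and $b_sb'\in B_{\chi\chi'}$, so the product lies in $S_{\chi\chi'}$; approximating a general $b\in B_\chi$ and passing to closed spans gives $\mathcal S_\chi\mathcal S_{\chi'}\subseteq\mathcal S_{\chi\chi'}$. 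The involution axiom $\mathcal S_\chi^*=\mathcal S_{\chi^{-1}}$ is analogous but easier, using the adjoint formula of Proposition \ref{Formulas} with $\alpha_{s^{-1}}(A_\chi)\subseteq A_\chi$, $A_\chi^*=A_{\chi^{-1}}$, and $B_\chi^*=B_{\chi^{-1}}$.
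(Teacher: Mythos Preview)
Your argument is correct and hinges on the same one-line computation as the paper's: for $a\boxtimes b\in S_\chi$ one has $\mu_z(a)\boxtimes b=\chi(z)(a\boxtimes b)$, extended by linearity and density via Proposition~\ref{GradingOfProduct}. The only difference is the direction of the inference. The paper takes the grading $\{S_\chi\}$ as given (simply asserting it yields a coaction), passes to the associated $Z$-action $\gamma$, and then uses the computation above on the spanning set to conclude $\gamma=\mu\boxtimes\text{id}_B$. You instead construct $\gamma$ directly as the restriction of $\mu\boxtimes\text{id}_B$ to $A\boxtimes_Z B$ and then recover $\{\mathcal S_\chi\}$ as its spectral decomposition. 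Your route has the mild advantage that it actually proves $\{\mathcal S_\chi\}$ is a grading rather than asserting it; but this also means your final paragraph, verifying $\mathcal S_\chi\mathcal S_{\chi'}\subseteq\mathcal S_{\chi\chi'}$ and $\mathcal S_\chi^*=\mathcal S_{\chi^{-1}}$ by hand through the double-grading trick, is unnecessary---once the $\mathcal S_\chi$ are identified as spectral subspaces of a genuine $Z$-action on a $C^*$-algebra, those axioms are automatic.
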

\begin{proof}
Let $c\in A\boxtimes_\mathbb TB$. By the previous proposition we can approximate
$c\approx \sum_i a_{\chi_i}\boxtimes b_{\chi_i}$. Then, by continuity and
linearity
\begin{align*}
\gamma_z(c)&\approx\sum_i\gamma_z\big(a_{\chi_i}\boxtimes b_{\chi_i}\big)\\
&=\sum_i\chi_i(z)\big(a_{\chi_i}\boxtimes b_{\chi_i}\big)\\
&=\sum_i\big(\chi_i(z)a_{\chi_i}\big)\boxtimes b_{\chi_i}\\
&=\sum_i\mu_z(a_{\chi_i})\boxtimes b_{\chi_i}
\end{align*}
Thus $\gamma=\mu\boxtimes \text{id}_B$. Showing that $\gamma=\text{id}_A
\boxtimes\nu$ is similar.
\end{proof}
\begin{lem}\label{Full}
Suppose $\{A_\chi\}_{\chi\in\widehat Z}$ and $\{B_\chi\}_{\chi\in\widehat
Z}$ are saturated gradings of $A$ and $B$, that is $A_\chi A_\omega=A_{\chi\omega}$.
Then $\{S_\chi\}_{\chi\in\widehat Z}$ (as described above) is saturated.
\end{lem}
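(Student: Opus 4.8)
The plan is to verify the two-sided equality $\overline{\text{span}}(S_\chi\cdot S_\omega)=\overline{\text{span}}(S_{\chi\omega})$ for all $\chi,\omega\in\widehat Z$, using the multiplication formula of Proposition \ref{Formulas} together with two compatibility facts coming from the standing hypotheses. First I would record these facts. Because $\mu$ commutes with $\alpha$, each $\alpha_s$ preserves the $\widehat Z$-grading, so $\alpha_s(A_\chi)=A_\chi$ for every $s$ and $\chi$. Because $\nu$ is covariant with respect to $\delta$, each $\nu_z$ preserves the $G$-grading $\{B_t\}_{t\in G}$, so the $\widehat Z$-grading and the $G$-grading on $B$ commute; applying the continuous projection onto $B_\chi$ to the $G$-homogeneous elements then shows that $\bigcup_{t\in G}(B_\chi\cap B_t)$ has dense linear span in $B_\chi$. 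This last point lets me reduce all arguments to $G$-homogeneous second factors, which is what makes the twist in Proposition \ref{Formulas} manageable.

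For the inclusion $S_\chi\cdot S_\omega\subseteq\overline{\text{span}}(S_{\chi\omega})$, by continuity of multiplication and the density just noted it suffices to treat $a\boxtimes b$ with $a\in A_\chi$ and $b\in B_\chi\cap B_t$ for some $t\in G$, paired with $a'\boxtimes b'$ where $a'\in A_\omega$, $b'\in B_\omega$. Since $b$ is $G$-homogeneous, Proposition \ref{Formulas} yields the single elementary tensor
\begin{align*}
(a\boxtimes b)(a'\boxtimes b')&=a\alpha_t(a')\boxtimes bb'.
\end{align*}
Here $a\alpha_t(a')\in A_\chi A_\omega=A_{\chi\omega}$ by $\alpha_t(A_\omega)=A_\omega$ and saturation of $A$, while $bb'\in B_\chi B_\omega=B_{\chi\omega}$ by saturation of $B$, so the product lies in $S_{\chi\omega}$.

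The reverse inclusion is where the twist must be confronted, and it is the step I expect to be the main obstacle. Given $a''\in A_{\chi\omega}$ and $b''\in B_{\chi\omega}$, saturation lets me approximate $a''\approx\sum_j c_jc_j'$ with $c_j\in A_\chi$, $c_j'\in A_\omega$, and $b''\approx\sum_k \beta_k\beta_k'$ with $\beta_k\in B_\chi$, $\beta_k'\in B_\omega$, where by density I may take each $\beta_k$ to be $G$-homogeneous, say $\beta_k\in B_\chi\cap B_{t_k}$. The naive products $(c_j\boxtimes\beta_k)(c_j'\boxtimes\beta_k')$ acquire an unwanted factor $\alpha_{t_k}$; to cancel it I replace the second left-hand entry by $\alpha_{t_k^{-1}}(c_j')$, which still lies in $A_\omega$ since $\alpha$ preserves $A_\omega$. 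Proposition \ref{Formulas} then gives
\begin{align*}
(c_j\boxtimes\beta_k)\big(\alpha_{t_k^{-1}}(c_j')\boxtimes\beta_k'\big)&=c_jc_j'\boxtimes\beta_k\beta_k',
\end{align*}
with each factor lying in $S_\chi$ and $S_\omega$ respectively. Summing over $j$ and $k$ and using bilinearity of $\boxtimes$ in the form $\sum_{j,k}x_j\boxtimes y_k=\big(\sum_j x_j\big)\boxtimes\big(\sum_k y_k\big)$, the right-hand side collapses to $\big(\sum_j c_jc_j'\big)\boxtimes\big(\sum_k\beta_k\beta_k'\big)\approx a''\boxtimes b''$. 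Hence $a''\boxtimes b''\in\overline{\text{span}}(S_\chi\cdot S_\omega)$, and since such elements span $S_{\chi\omega}$ this completes the reverse inclusion and the proof.
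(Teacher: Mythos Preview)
Your argument is correct and follows the same outline as the paper: use saturation of the $A$- and $B$-gradings to factor a typical element of $S_{\chi\omega}$ into products coming from $S_\chi$ and $S_\omega$. The paper simply writes $(a_{\chi,i}\boxtimes b_{\chi,j})(a_{\omega,i}\boxtimes b_{\omega,j})=a_{\chi,i}a_{\omega,i}\boxtimes b_{\chi,j}b_{\omega,j}$ without addressing the twist from Proposition~\ref{Formulas}; as written that equality is false in general, since the product is $a_{\chi,i}\alpha_t(a_{\omega,i})\boxtimes b_{\chi,j}b_{\omega,j}$ when $b_{\chi,j}\in B_t$. You handle this point explicitly by first reducing to $G$-homogeneous $\beta_k\in B_\chi\cap B_{t_k}$ (using that the $\widehat Z$- and $G$-gradings on $B$ commute, a consequence of the covariance of $\nu$ with $\delta$) and then pre-correcting the second left entry by $\alpha_{t_k^{-1}}$, which stays in $A_\omega$ because $\mu$ commutes with $\alpha$. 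In effect you have supplied the justification the paper omits; your forward-inclusion argument is likewise more careful than the paper's bare assertion that $S_\chi S_\omega\subseteq S_{\chi\omega}$.
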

\begin{proof}
We already have that $S_\chi S_\omega\subseteq S_{\chi\omega}$ for all
$\chi,\omega\in\widehat Z$ so it will suffice to show the reverse inclusion.
Let $a\boxtimes b\in S_{\chi\omega}$. Then $a\in A_{\chi\omega}$ and 
$b\in B_{\chi\omega}$. Since  the gradings of $A$ and $B$ are saturated,
we have that $a\approx\sum_i a_{\chi,i}a_{\omega,i}$ and
$b\approx\sum_j b_{\chi,j}b_{\omega,j}$ with $a_{\chi,i}\in A_\chi$,
$a_{\omega,i}\in A_\omega$, $b_{\chi,j}\in B_\chi$ and $b_{\omega,j}\in B_\omega$.
Thus 
\begin{align*}
a\boxtimes b&=\sum_{i,j}a_{\chi,i}a_{\omega,i}\boxtimes b_{\chi,j}b_{\omega,j}\\
&=\sum_{i,j}\big(a_{\chi,i}\boxtimes b_{\chi,i}\big)\big(a_{\omega,i}\boxtimes
b_{\omega,j}\big)\\
&\subseteq S_\chi\boxtimes S_\omega
\end{align*}
\end{proof}
\section{Ideal Compatibility}
Before we state our main result, we need to define two technical
conditions involving the Katsura ideals. We call these conditions
Katsura nondegeneracy and ideal compatibility. These conditions are
basically the same as those presented in the author's first paper 
\cite{Morgan}, only the definition of ideal compatibility must be 
modified to allow twisted tensor products instead of ordinary ones.
\begin{dfn}\label{KatsuraNonDeg}
Let $(X,A)$ be a correspondence and let $J_X$ be the Katsura ideal of $A$
(recall that the Katsura ideal is the ideal $J_X=\phi^{-1}\big(\mathcal K(X)
\big)\cap\big(ker(\phi)\big)^{\perp}$). We say that $X$ is \emph{Katsura
nondegenerate} if $J_X\cdot X=X$.
\end{dfn}
\begin{ex}
Let $X$ be a correspondence over a $C^*$-algebra $A$ such that the
left action is injective and implemented by compacts. In this case
we have that $J_X=A$. Thus:
\begin{align*}
X\cdot J_X&=X\cdot A\\
&=X
\end{align*}
\end{ex}
\begin{dfn}\label{ProperSource}
Recall that a vertex in a directed graph is called a \emph{source}
if it receives no edges. We will call such a vertex a \emph{proper
source} if it emits at least one edge.
\end{dfn}
\begin{prop}\label{GraphKatNonDeg}
Let $E$ be a directed graph. Then $X(E)$ is Katsura nondegenerate if
and only if $E$ has no proper sources and no infinite receiver emits an edge.
\end{prop}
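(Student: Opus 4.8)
The plan is to compute the Katsura ideal $J_{X(E)}\subseteq A=c_0(E^0)$ vertex by vertex and then read off the nondegeneracy condition $X(E)\cdot J_{X(E)}=X(E)$ directly from the edge structure. I work throughout with the generating system of characteristic functions from Example \ref{GraphProductCorr}, for which $\phi(\chi_v)\chi_e=\chi_e$ exactly when $r(e)=v$ (and $0$ otherwise), $\chi_e\cdot\chi_v=\chi_e$ exactly when $s(e)=v$ (and $0$ otherwise), and $\langle\chi_e,\chi_{e'}\rangle_A=\chi_{s(e)}$ when $e=e'$ and $0$ when $e\ne e'$. In particular the family $\{\chi_e\}_{e\in E^1}$ is mutually orthogonal and each $\chi_e$ is a unit vector, since $\|\chi_e\|_A=\|\chi_{s(e)}\|^{1/2}=1$.

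First I would identify $\phi(\chi_v)$ as a projection. A short computation shows that $\Theta_{\chi_e,\chi_e}$ is the rank-one projection onto $\mathbb{C}\chi_e$, whence $\phi(\chi_v)=\sum_{r(e)=v}\Theta_{\chi_e,\chi_e}$, the projection onto $\overline{\text{span}}\{\chi_e:r(e)=v\}$. Three consequences follow. If $v$ receives no edges then $\phi(\chi_v)=0$, so $\ker\phi=\overline{\text{span}}\{\chi_v:v\text{ is a source}\}$ and hence $(\ker\phi)^\perp=\overline{\text{span}}\{\chi_v:|r^{-1}(v)|\ge 1\}$. If $v$ receives finitely many edges the sum is finite, so $\phi(\chi_v)\in\mathcal K(X(E))$. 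If $v$ is an infinite receiver the sum is an infinite orthogonal sum of rank-one projections and fails to converge in norm, so $\phi(\chi_v)\notin\mathcal K(X(E))$; thus $\phi^{-1}\big(\mathcal K(X(E))\big)=\overline{\text{span}}\{\chi_v:|r^{-1}(v)|<\infty\}$. Intersecting, I obtain
\[
J_{X(E)}=c_0\big(\{v\in E^0:\ 1\le|r^{-1}(v)|<\infty\}\big),
\]
the functions supported on the vertices that receive at least one but only finitely many edges.

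Next I would compute the right action $X(E)\cdot J_{X(E)}$. Writing $W=\{v:1\le|r^{-1}(v)|<\infty\}$, the fact that $\chi_e\cdot\chi_v$ equals $\chi_e$ when $s(e)=v$ and $0$ otherwise gives $X(E)\cdot J_{X(E)}=\overline{\text{span}}\{\chi_e:s(e)\in W\}$. Because the $\chi_e$ are mutually orthogonal, a given $\chi_e$ lies in this closed span if and only if $s(e)\in W$; hence $X(E)\cdot J_{X(E)}=X(E)$ if and only if $s(e)\in W$ for every edge $e$. I then translate this into vertex language: the requirement that $s(e)$ receive at least one edge for every $e$ says no vertex emits an edge while receiving none, i.e.\ $E$ has no proper sources, while the requirement that $s(e)$ receive finitely many edges for every $e$ says no infinite receiver emits an edge. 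The conjunction of the two is precisely ``$s(e)\in W$ for all $e$'', which yields the stated biconditional.

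The bookkeeping in the first and third paragraphs is routine; the one genuinely analytic point is the claim that $\phi(\chi_v)\notin\mathcal K(X(E))$ when $v$ is an infinite receiver, which I expect to be the main obstacle. I would argue it as follows: $\mathcal K(X(E))$ is the closed span of the operators $\Theta_{\chi_e,\chi_{e'}}$, and any finite linear combination of these involves only finitely many edges, hence annihilates $\chi_e$ for all but finitely many edges $e$ with $r(e)=v$. Since $\phi(\chi_v)\chi_e=\chi_e$ with $\|\chi_e\|=1$ for each such $e$, every finite combination stays at distance at least $1$ from $\phi(\chi_v)$; as $v$ is an infinite receiver there remain infinitely many such $e$, so $\phi(\chi_v)$ is not a norm limit of finite combinations and is therefore not compact.
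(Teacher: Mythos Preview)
Your proof is correct and follows essentially the same route as the paper: identify $J_{X(E)}$ with the functions supported on $W=\{v:1\le|r^{-1}(v)|<\infty\}$, observe that $X(E)\cdot J_{X(E)}=\overline{\text{span}}\{\chi_e:s(e)\in W\}$, and translate the condition $s(E^1)\subseteq W$ into the two graph-theoretic clauses. The only difference is packaging: the paper takes the description of $J_{X(E)}$ as known (it is quoted later from \cite{GraphAlgebraBook}) and argues the two failure modes separately by contrapositive, whereas you derive $J_{X(E)}$ from scratch via the projection formula $\phi(\chi_v)=\sum_{r(e)=v}\Theta_{\chi_e,\chi_e}$ and handle both directions at once; your noncompactness argument for the infinite-receiver case is exactly the point the paper leaves implicit.
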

\begin{proof}
Suppose there is $v\in E^0$ such that $|r^{-1}(v)|=\infty$ and
$|s^{-1}(v)|>0$. Then for every $f\in J_X$ we have $f(v)=0$. Thus
for any $g\in C_c(E^1)$, $f\in J_X$, and $e\in s^{-1}(v)$, 
we have $(g\cdot f)(e)
=g(e)f\big(s(e)\big)=g(e)f(v)=0$. Thus $h(e)=0$ for all $h\in C_c(E^1)
\cdot J_X$ and, taking the limit, $x(e)=0$ for all $x\in X\cdot J_X$. 
Thus $\delta_e\notin X\cdot J_X$ since $\delta_e(e)=1\neq
0$ but $\delta_e\in X$. Therefore $X\neq X\cdot J_X$, i.e. $X$ is not
Katsura nondegenerate.

Similarly, suppose that $E$ has a proper source $v$. Then, since $|r^{-1}(v)|=0$
we must have $f(v)=0$ for all $f\in J_X$. Then for any $g\in C_c(E^1)$ and
$e\in s^{-1}(v)$ we have that $(g\cdot f)(e)=g(e)f(v)=0$ for $f\in J_X$.
Thus by similar reasoning as above we have that $x(e)=0$ for all $x\in
X\cdot J_X$ and so $\delta_e\notin X\cdot J_X$ but $\delta_e\in X$ and
we can again conclude that $X\neq X\cdot J_X$ so $X$ is not Katsura
nondegenerate.

On the other hand, suppose $E$ has no proper sources and no infinite receiver
in $E$ emits an edge.
Let $e\in E^1$ and let $v=s(e)$. Then $|r^{-1}(v)|<\infty$ 
and $|r^{-1}(v)|>0$ by assumption, so function in $J_X$ can be 
supported on $v$. In particular, $\delta_v\in J_X$. Since $\delta_e
\cdot\delta_v=\delta_e$ we know that $\delta_e\in X\cdot J_X$.
Since $e$ was arbitrary, we have that all such characteristic functions
are contained in $X\cdot J_X$. But these functions densely span $C_c(E^1)$
and thus densely span $X$,  so we have that $X\subseteq X\cdot J_X$ and therefore
$X=X\cdot J_X$ so $X$ is Katsura nondegenerate.
\end{proof}
We now turn our attention to ideal compatibility:
\begin{dfn}\label{IdealCompatible}
Given two correspondences $(X,A)$ and $(Y,B)$, an action $(\gamma,\alpha)$
of a discrete group $G$ on $X$ and a coaction $(\sigma,\delta)$ of $G$ on
$Y$, we say that
$X$ and $Y$ are \emph{ideal compatible} with respect to $\gamma$ and $\sigma$
if $J_X\tensor[_\alpha]\boxtimes{_\delta} J_Y=J_{X\tensor[_\gamma]\boxtimes{
_\sigma}Y}$ where $J_X,J_Y$ and $J_{X\tensor[_\gamma]\boxtimes{
_\sigma}Y}$ are the Katsura ideals of $X,Y$, and $X\tensor[_\gamma]\boxtimes{
_\sigma}Y$ respectively.
\end{dfn}
Once again, this condition will be met in the case of injective left actions
implemented by compacts:
\begin{prop}\label{GraphIdealComp}
Suppose we have correspondences $(X,A)$ and $(Y,B)$, an action $(\gamma,\alpha)$
of a discrete group $G$ on $X$ and a coaction $(\sigma,\delta)$ of $G$ on
$Y$. If the left actions of $A$ on $X$ and $B$ on $Y$ are injective and
implemented by compacts, then $X$ and $Y$ are ideal compatible. 
\end{prop}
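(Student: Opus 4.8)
The plan is to deduce ideal compatibility from the stronger assertion that the left action of $A\boxtimes B$ on $X\boxtimes Y$ is itself injective and implemented by compacts. Indeed, since the left actions of $A$ on $X$ and of $B$ on $Y$ are injective and implemented by compacts, the remark following the definition of the Katsura ideal gives $J_X=A$ and $J_Y=B$, so that $J_X\boxtimes J_Y=A\boxtimes B$. Thus it suffices to prove $J_{X\boxtimes Y}=A\boxtimes B$, and for this I would show that the left action $\phi_X\boxtimes\phi_Y$ of $A\boxtimes B$ on $X\boxtimes Y$ is injective with image contained in $\mathcal K(X\boxtimes Y)$; that same remark then forces the Katsura ideal of $X\boxtimes Y$ to be all of $A\boxtimes B$.

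The compactness half is immediate from (\ref{TwistedProdOfCompacts}): we have $\mathcal K(X\boxtimes Y)\cong\mathcal K(X)\boxtimes\mathcal K(Y)$, and since $\phi_X(A)\subseteq\mathcal K(X)$ and $\phi_Y(B)\subseteq\mathcal K(Y)$, the map $\phi_X\boxtimes\phi_Y$ carries $A\boxtimes B$ into $\mathcal K(X)\boxtimes\mathcal K(Y)=\mathcal K(X\boxtimes Y)$, so the left action is implemented by compacts.

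The injectivity half is the main obstacle, and the key idea is to realize $\phi_X\boxtimes\phi_Y$ as the restriction of a spatial tensor product of maps. Recall from Definition~\ref{TPdef} that $A\boxtimes B$ sits inside the minimal tensor product $A\otimes B\otimes\mathcal K(\mathcal H)$ via $a\boxtimes b=i_A(a)i_B(b)$, with $i_A(a)=(\text{id}_A\otimes\pi)\delta_A(a)_{13}$ and $i_B(b)=(\text{id}_B\otimes\rho)\delta_B(b)_{23}$. Using the equivariance of the left actions, i.e.\ $\delta_{\mathcal K(X)}\circ\phi_X=(\phi_X\otimes\text{id})\circ\delta_A$ together with the analogous identity for $\phi_Y$, I would verify on generators that
\[
(\phi_X\otimes\phi_Y\otimes\text{id}_{\mathcal K(\mathcal H)})(a\boxtimes b)=\phi_X(a)\boxtimes\phi_Y(b)=(\phi_X\boxtimes\phi_Y)(a\boxtimes b),
\]
so that $\phi_X\boxtimes\phi_Y$ is precisely the restriction of $\phi_X\otimes\phi_Y\otimes\text{id}_{\mathcal K(\mathcal H)}$ to the subalgebra $A\boxtimes B$. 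Since $\phi_X$ and $\phi_Y$ are injective and the minimal tensor product of injective $*$-homomorphisms is again injective, the map $\phi_X\otimes\phi_Y\otimes\text{id}_{\mathcal K(\mathcal H)}$ is injective on all of $A\otimes B\otimes\mathcal K(\mathcal H)$, hence so is its restriction to $A\boxtimes B$.

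Putting the two halves together, the left action of $A\boxtimes B$ on $X\boxtimes Y$ is injective and implemented by compacts, whence $J_{X\boxtimes Y}=A\boxtimes B=J_X\boxtimes J_Y$, which is exactly ideal compatibility in the sense of Definition~\ref{IdealCompatible}. The one step demanding genuine care is the generator computation identifying the abstractly-defined left action $\phi_X\boxtimes\phi_Y$ with the concrete spatial restriction, which rests on the equivariance of $\phi_X$ and $\phi_Y$ built into the coactions on $X$ and $Y$ (with the usual caveat of extending $\phi_X\otimes\phi_Y\otimes\text{id}$ to the relevant multiplier algebras); everything else is formal.
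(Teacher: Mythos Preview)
Your argument is correct and follows the same overall line as the paper: both reduce to the observation that all three Katsura ideals equal the full coefficient algebras, so that $J_X\boxtimes J_Y=A\boxtimes B=J_{X\boxtimes Y}$. The difference is one of detail rather than strategy. The paper's proof is a one-liner that simply asserts $J_{X\boxtimes Y}=A\boxtimes B$ without further comment, implicitly relying on the reader to see (via Definition~\ref{TwistedCorr} and the functoriality of $\boxtimes$) that injectivity and compactness of $\phi_X$ and $\phi_Y$ pass to $\phi_X\boxtimes\phi_Y$. You actually supply that justification: compactness from~(\ref{TwistedProdOfCompacts}), and injectivity by realizing $\phi_X\boxtimes\phi_Y$ as the restriction of the spatially injective map $\phi_X\otimes\phi_Y\otimes\mathrm{id}$. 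So your write-up is a fleshed-out version of what the paper leaves implicit; the multiplier-extension caveat you flag is the only point needing care, and it can be handled (e.g.\ by unitizing, or by appealing directly to the functoriality of the twisted tensor product for equivariant morphisms established in~\cite{Woronowicz}).
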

\begin{proof}
In this case the Katsura ideals are the whole algebras: $J_X=A$, $J_Y=B$
and $J_{X\tensor[_\gamma]\boxtimes{_\sigma}Y}=A\tensor[_\alpha]\boxtimes
{_\delta}B$. Thus the equation 
$J_X\tensor[_\alpha]\boxtimes{_\delta} J_Y=J_{X\tensor[_\gamma]\boxtimes{
_\sigma}Y}$ becomes trivial.
\end{proof}
In Example 8.13 of \cite{GraphAlgebraBook},
it is shown that if $E$ is a discrete graph, then
$$J_{X(E)}=\overline{span}\{\chi_v:0<|r^{-1}(v)|<\infty\}$$
where $X(E)$ is the associated correspondence and $\chi_v\in c_0(E^0)$
denotes the characteristic function of the vertex $v\in E^0$. With this in
mind, we give the following proposition:
\begin{prop}\label{GraphCompatible}
Let $E$ and $F$ be discrete graphs and let $\big(X=X(E),A=c_0(E^0)\big)$
and $\big(Y=X(F),B=c_0(F^0)\big)$ be
the associated correspondences. Suppose $\alpha^E$ is an action of a 
discrete group $G$ on $E$
and let $(\gamma,\alpha)$ be the associated action on $(X,A)$. Let $\delta:
F^1\to G$ be a labeling of the edges of $F$ and let $(\sigma,\iota)$ be the
associated coaction on $(Y,B)$. Then $X$ and $Y$ are ideal compatible with
respect to $\gamma$ and $\sigma$.
\end{prop}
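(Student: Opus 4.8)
The plan is to reduce everything to the explicit description of the Katsura ideal of a graph correspondence, namely $J_{X(E)}=\overline{\text{span}}\{\chi_v:0<|r^{-1}(v)|<\infty\}$, and to exploit the graph isomorphism established in Example \ref{GraphProductCorr}. By Definition \ref{IdealCompatible}, I must show that $J_X\tensor[_\alpha]\boxtimes{_\iota}J_Y=J_{X\tensor[_\gamma]\boxtimes{_\sigma}Y}$, where the coaction induced on $B=c_0(F^0)$ by the labeling $\delta$ is the trivial coaction $\iota$. First I would invoke Example \ref{GraphProductCorr}, which identifies $X(E)\tensor[_\gamma]\boxtimes{_\sigma}X(F)$ with the correspondence $X\big(E\tensor[_{\alpha^E}]\times{_\delta}F\big)$ of the twisted product graph. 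This lets me compute $J_{X\boxtimes Y}$ as the Katsura ideal of an honest graph correspondence, via the formula quoted above.

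The key computation is the in-degree of a vertex $v\times w$ in the product graph. Since the range map of $E\tensor[_{\alpha^E}]\times{_\delta}F$ is \emph{untwisted}, i.e.\ $r(e\times f)=r_E(e)\times r_F(f)$, we have $r^{-1}(v\times w)=r_E^{-1}(v)\times r_F^{-1}(w)$ and hence $|r^{-1}(v\times w)|=|r_E^{-1}(v)|\cdot|r_F^{-1}(w)|$. It follows that $0<|r^{-1}(v\times w)|<\infty$ exactly when both $0<|r_E^{-1}(v)|<\infty$ and $0<|r_F^{-1}(w)|<\infty$. Therefore
$$J_{X\boxtimes Y}=\overline{\text{span}}\{\chi_{v\times w}:0<|r_E^{-1}(v)|<\infty\text{ and }0<|r_F^{-1}(w)|<\infty\},$$
which, under the isomorphism $\varphi:c_0(E^0\times F^0)\to A\otimes B$ sending $\chi_{v\times w}\mapsto\chi_v\otimes\chi_w$, corresponds to $\overline{\text{span}}\{\chi_v\otimes\chi_w:0<|r_E^{-1}(v)|<\infty,\ 0<|r_F^{-1}(w)|<\infty\}$.

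It then remains to identify $J_X\tensor[_\alpha]\boxtimes{_\iota}J_Y$ with the same span. Here I would note that, because the coaction $\iota$ on $B$ is trivial, Proposition \ref{Formulas} shows the multiplication and involution on $A\tensor[_\alpha]\boxtimes{_\iota}B$ reduce to those of the ordinary tensor product, so $A\boxtimes B\cong A\otimes B$ via $a\boxtimes b\mapsto a\otimes b$. Since $\alpha^E$ acts by graph automorphisms it preserves in-degrees, so $J_X$ is $\alpha$-invariant and inherits the action, while $J_Y$ trivially inherits the trivial coaction; with the coefficient twist gone, $J_X\boxtimes J_Y=J_X\otimes J_Y$. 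This space is spanned by the elementary tensors $\chi_v\otimes\chi_w$ with $\chi_v\in J_X$ and $\chi_w\in J_Y$, i.e.\ with $0<|r_E^{-1}(v)|<\infty$ and $0<|r_F^{-1}(w)|<\infty$, which matches the span above. Hence $J_X\tensor[_\alpha]\boxtimes{_\iota}J_Y=J_{X\tensor[_\gamma]\boxtimes{_\sigma}Y}$.

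The two ideal computations are routine once the graph identification is in place; the one point that demands care is the observation that the twist enters only through the \emph{source} map, whereas the Katsura ideal is governed entirely by the \emph{range} map (in-degrees). This is precisely why the twisting has no effect and the argument collapses to the untwisted graph case, so I would be sure to state explicitly that $r$ is untwisted before reading off the in-degree factorization, since that is the crux of the whole proposition.
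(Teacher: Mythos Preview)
Your proposal is correct and follows essentially the same argument as the paper: invoke Example \ref{GraphProductCorr} to identify $X\boxtimes Y$ with the graph correspondence of $E\tensor[_{\alpha^E}]\times{_\delta}F$, observe that the range map is the untwisted product $r_E\times r_F$ so that in-degrees multiply, and then use the triviality of $\iota$ to reduce $A\boxtimes B$ to $A\otimes B$ and match $J_{X\boxtimes Y}$ with $J_X\otimes J_Y$. Your explicit remarks that the twist lives only in the source map and that $J_X$ is $\alpha$-invariant (since graph automorphisms preserve in-degrees) are helpful clarifications the paper leaves implicit, but the route is the same.
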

\begin{proof} 
Recall that $X\tensor[_\gamma]\boxtimes{_\sigma}Y=X(E\tensor[_{\alpha^E}]
\times{_\delta} F)$ where $E\tensor[_{\alpha^E}]\times{_\delta} F$ is the
graph defined in Example \ref{GraphProductCorr}. Thus
$$J_{X\tensor[_\gamma]\boxtimes{_\sigma} Y}=\overline{span}\{\chi_{v\times
w}:0<|r_{E\tensor[_{\alpha^E}]\times{_\delta}F}^{-1}(v\times w)|<\infty\}$$
By definition, $r_{E\tensor[_{\alpha^E}]\times{_\delta}F}=r_E\times r_F$
so $r_{E\tensor[_{\alpha^E}]\times{_\delta}F}^{-1}
(v\times w)=r^{-1}_E(v)\times r^{-1}_F(w)$ and thus $|r^{-1}_{E\tensor
[_{\alpha^E}]\times{_\delta}F}(v\times
w)|=|r_E^{-1}(v)|\cdot|r^{-1}_F(w)|$. But $0<|r_E^{-1}(v)|\cdot|r^{-1}_F(w)|<
\infty$ if and only if $0<|r_E^{-1}(v)|<\infty$ and $0<|r^{-1}_F(w)|<\infty$.
Thus we have that
$$J_{X\tensor[_\gamma]\boxtimes{_\sigma}Y}=\overline{span}\{\chi_{v\times
w}:0<|r_E^{-1}(v)|,|r^{-1}_F(w)|<\infty\}$$
If we identify $c_0(E^0\times
F^0)$ with $c_0(E^0)\otimes c_0(F^0)$ in the standard way, we see that 
$\chi_{v\times x}=\chi_v\otimes\chi_w$ (recall from Example \ref{GraphProductCorr}
that $\iota$ is the trivial coaction so we have that $A\tensor[_\alpha]\boxtimes{_\iota}
B\cong A\otimes B$). Thus
\begin{align*}
J_{X\tensor[_\gamma]\boxtimes{_\sigma}Y}&=\overline{span}\{\chi_v\otimes
\chi_w:0<|r_E^{-1}(v)|,|r^{-1}_F(w)|<\infty\}\\
&=\overline{span}\{f\otimes g:f\in J_X,g\in J_Y\}\\
&=J_X\otimes J_Y
\end{align*}
Therefore, $X$ and $Y$ are ideal-compatible.
\end{proof}

\section{Main Result}
We will now state our main theorem, although we delay the proof until
later in this section. Throughout this section $X$ and $Y$ will be 
correspondences over $C^*$-algebras $A$ and $B$, $(\sigma,\delta)$ 
will be 
a coaction of a discrete group $G$ on $Y$ and $(\gamma,\alpha)$ will be an
action of $G$ on $X$. We will denote the induced action on $\mathcal O_X$
by $\gamma'$ and the induced coaction on $\mathcal O_Y$ by $\sigma'$.
To simplify the notation, we will make the following definitions:
$X\boxtimes Y:=X\tensor[_\gamma]\boxtimes{_\sigma}Y$, $A\boxtimes B
:=A\tensor[_\alpha]\boxtimes{_\delta}B$, and $\mathcal O_X\boxtimes_\mathbb
T \mathcal O_Y:=\mathcal O_X\tensor[_{\gamma'}]\boxtimes{_{\sigma',\mathbb
T}}\mathcal O_Y$.
\begin{thm}\label{MainResult}
Suppose $X$ and $Y$ are full, ideal compatible, 
and Katsura nondegenerate (see the previous section).
Then $\mathcal O_{X\tensor[_\gamma]\boxtimes{_\sigma} Y}\cong\mathcal O_X
\tensor[_{\gamma'}]\boxtimes{_{\sigma',\mathbb T}}\mathcal O_Y$.
\end{thm}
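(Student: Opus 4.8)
The plan is to exhibit $\mathcal O_X\tensor[_{\gamma'}]\boxtimes{_{\sigma',\mathbb T}}\mathcal O_Y$ as the $C^*$-algebra generated by a Cuntz-Pimsner covariant representation of $X\boxtimes Y$, and then to treat injectivity and surjectivity separately, handling injectivity with the gauge-invariant uniqueness theorem. Let $(k_X,k_A)$ and $(k_Y,k_B)$ be the universal Cuntz-Pimsner covariant representations and let $\mu,\nu$ denote the gauge actions of $\mathbb T$ on $\mathcal O_X,\mathcal O_Y$. I would set
\begin{align*}
\psi(x\boxtimes y)=k_X(x)\boxtimes k_Y(y),\qquad \pi(a\boxtimes b)=k_A(a)\boxtimes k_B(b).
\end{align*}
Since $k_X(x)$ and $k_Y(y)$ have gauge degree $1$, the element $k_X(x)\boxtimes k_Y(y)$ is fixed by $\lambda_z=\mu_z\boxtimes\nu_{z^{-1}}$; hence $\psi$ and $\pi$ take values in the balanced product $\mathcal O_X\boxtimes_{\mathbb T}\mathcal O_Y$, which is exactly the reason the balancing is taken over $\mathbb T$. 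Checking that $(\psi,\pi)$ is a Toeplitz representation is then a direct computation: the module-level multiplication and inner-product formulas for the twisted tensor product (the correspondence analogue of Proposition \ref{Formulas}) turn the three Toeplitz identities into consequences of the Toeplitz relations for $(k_X,k_A)$ and $(k_Y,k_B)$, the equivariance $\gamma'_s\circ k_X=k_X\circ\gamma_s$, and the $\sigma'$-homogeneity of $k_Y(y_s)$.

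The essential algebraic input is Cuntz-Pimsner covariance, and this is where ideal compatibility enters. Using the identification $\mathcal K(X\boxtimes Y)\cong\mathcal K(X)\boxtimes\mathcal K(Y)$ from (\ref{TwistedProdOfCompacts}), I would first show that $\psi^{(1)}$ sends $\phi_X(c)\boxtimes\phi_Y(d)$ to $k_X^{(1)}(\phi_X(c))\boxtimes k_Y^{(1)}(\phi_Y(d))$, by evaluating both sides on the rank-one generators $\Theta_{x\boxtimes y,\,x'\boxtimes y'}$ and using $\psi^{(1)}(\Theta_{\xi,\eta})=\psi(\xi)\psi(\eta)^*$. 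Ideal compatibility (Definition \ref{IdealCompatible}) provides $J_{X\boxtimes Y}=J_X\boxtimes J_Y$, so covariance need only be verified on elementary tensors $c\boxtimes d$ with $c\in J_X$ and $d\in J_Y$; there it collapses to the covariance identities $k_X^{(1)}(\phi_X(c))=k_A(c)$ and $k_Y^{(1)}(\phi_Y(d))=k_B(d)$ of the factors. The universal property of $\mathcal O_{X\boxtimes Y}$ then yields a $*$-homomorphism $\Psi:\mathcal O_{X\boxtimes Y}\to\mathcal O_X\boxtimes_{\mathbb T}\mathcal O_Y$ with $\Psi(k_{X\boxtimes Y}(x\boxtimes y))=k_X(x)\boxtimes k_Y(y)$.

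For injectivity I would invoke the gauge-invariant uniqueness theorem of \cite{Katsura1}. The coefficient map $\pi$ is injective because $k_A$ and $k_B$ are injective and the twisted tensor product of equivariant injective $*$-homomorphisms is again injective (functoriality of the construction in \cite{Woronowicz}). By Proposition \ref{BalancedAction} the target carries a canonical $\mathbb T$-action, which I denote $\beta$ and which satisfies $\beta=\mu\boxtimes\mathrm{id}$; then $\beta_w(k_X(x)\boxtimes k_Y(y))=w\,(k_X(x)\boxtimes k_Y(y))$ while $\beta$ fixes $\pi(A\boxtimes B)$, so $\Psi$ intertwines $\beta$ with the gauge action on $\mathcal O_{X\boxtimes Y}$. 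The theorem then forces $\Psi$ to be an isomorphism onto its image $C^*(\psi,\pi)$.

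It remains to prove surjectivity, and this is the main obstacle. Multiplying out generators shows that $C^*(\psi,\pi)$ is the closed span of the \emph{length-matched} monomials $\big(k_X^n(\xi)k_X^m(\eta)^*\big)\boxtimes\big(k_Y^n(\zeta)k_Y^m(\omega)^*\big)$, carrying the \emph{same} pair $(n,m)$ on each factor, whereas Proposition \ref{GradingOfProduct} identifies $\mathcal O_X\boxtimes_{\mathbb T}\mathcal O_Y$ with the closed span of all $a\boxtimes b$ having equal gauge degree, which allows the two factors to have different word lengths. To bridge the gap I would take a monomial $a\boxtimes b$ whose $X$-factor has length $(n,n-k)$ and whose $Y$-factor has length $(p,p-k)$ with, say, $n<p$, and factor the excess of the taller $Y$-side into a degree-zero ``middle'' $k_Y^{(p-n)}(\Theta)\in(\mathcal O_Y)_0$. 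The role of fullness and Katsura nondegeneracy is precisely to supply, on the $X$-side, a matching approximate unit of the form $k_X^{(p-n)}(\cdot)\in(\mathcal O_X)_0$ that can be inserted into the middle of the $X$-factor without changing it; the resulting element is then a product of length-matched pieces, including the length-$(p-n,p-n)$-matched factor $k_X^{(p-n)}(\cdot)\boxtimes k_Y^{(p-n)}(\Theta)$, and so lies in $C^*(\psi,\pi)$. The symmetric case $p<n$ uses fullness and Katsura nondegeneracy of $Y$, and passing to closed spans gives $C^*(\psi,\pi)=\mathcal O_X\boxtimes_{\mathbb T}\mathcal O_Y$. Establishing this length-raising property carefully, namely verifying that the hypotheses genuinely produce the idle middle factor with the correct gauge and group-grading behavior, is the technical heart of the argument.
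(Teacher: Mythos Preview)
Your proposal is correct and follows essentially the same route as the paper: build $(\psi,\pi)=(k_X\boxtimes k_Y,\,k_A\boxtimes k_B)$, verify Cuntz--Pimsner covariance via ideal compatibility and the identity $\psi^{(1)}(S\boxtimes T)=k_X^{(1)}(S)\boxtimes k_Y^{(1)}(T)$, obtain injectivity from the gauge-invariant uniqueness theorem using the $\mathbb T$-action of Proposition~\ref{BalancedAction}, and handle surjectivity by length-matching the two tensor factors using Katsura nondegeneracy.

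The only organizational difference is in the surjectivity step. The paper first uses fullness to conclude (via Lemma~\ref{Full}) that the $\mathbb Z$-grading $\{S_n\}$ of $\mathcal O_X\boxtimes_{\mathbb T}\mathcal O_Y$ is saturated, hence the whole algebra is generated by $S_1$; it then suffices to place a single type of generator $k_X^{n+1}(x)k_X^n(x')^*\boxtimes k_Y^{m+1}(y)k_Y^m(y')^*$ in the image. You instead attack general degree-$k$ monomials directly. Both work, but the paper's reduction to $S_1$ is a tidy simplification worth borrowing. On the length-matching itself, the paper's mechanism is exactly what your ``idle middle factor'' describes: write $x=x_0a$ with $a\in J_X$ via $J_X\cdot X=X$, replace $k_A(a)$ by $k_X^{(1)}(\phi_X(a))$, and pair this with the excess $k_Y^{(l)}(\Theta_{y^{(2)},y'^{(2)}})$ coming from splitting $y=y^{(1)}\otimes y^{(2)}$. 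Your phrasing in terms of an approximate unit in $k_X^{(p-n)}(\mathcal K(X^{\otimes(p-n)}))$ is the same device, just expressed at level $p-n$ all at once rather than iterated; either formulation requires the careful group-grading bookkeeping you flag as the technical heart.
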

Before we can prove this result, we will need the following lemmas:
\begin{lem}\label{ToeplitzRepn}
Suppose $(\pi_X,\psi_X)$ and $(\pi_Y,\psi_Y)$ are Toeplitz representations
of $X$ and $Y$ in $C^*$-algebras $C$ and $D$ and let $\gamma'$ and $\sigma'$
be the induced action and coaction on $C$ and $D$. Let $\psi:=\psi_X\otimes
\psi_Y$ and let $\pi:=\pi_X\otimes\pi_Y$. Then $(\pi,\psi)$ is a Toeplitz
representation of $X\tensor[_\gamma]\boxtimes{_\sigma}Y$ in $C\tensor[_{\gamma'}]
\boxtimes{_{\sigma'}}D$.
\end{lem}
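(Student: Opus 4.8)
The plan is to verify directly that the pair $(\pi,\psi)=(\pi_X\otimes\pi_Y,\psi_X\otimes\psi_Y)$ satisfies the three defining axioms of a Toeplitz representation of $X\boxtimes Y$ in $C\boxtimes_{\gamma',\sigma'}D$. Since the paper has given explicit formulas for the left action, right action, and inner product on $X\boxtimes Y$ (the proposition following Lemma \ref{Commutation}), and since $C\boxtimes D$ has the multiplication and involution formulas of Proposition \ref{Formulas}, I expect each axiom to reduce to a computation on elementary tensors $x\boxtimes y$, which then extends by linearity and continuity. The main subtlety is bookkeeping: the twisting by $\alpha$ (resp.\ $\gamma'$) and the grading by the coaction $\delta$ (resp.\ $\sigma'$) must be tracked carefully so that the group-element twists match up on both sides.

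\textbf{Key steps.} First I would make precise what $\psi$ and $\pi$ mean: since $\psi_X:X\to C$ and $\psi_Y:Y\to D$ are linear, and since $X\boxtimes Y$ and $C\boxtimes D$ are both realized as twisted products sitting inside linking-algebra twisted products, I would define $\psi(x\boxtimes y):=\psi_X(x)\boxtimes\psi_Y(y)$ and $\pi(a\boxtimes b):=\pi_X(a)\boxtimes\pi_Y(b)$ on the generating systems supplied by Corollary \ref{TwistedGeneratingSystem}, and check well-definedness and continuity in the same style as Lemma \ref{IsomLemma}. Second, I would verify the right-module axiom $\psi\big((x\boxtimes y_s)(a\boxtimes b)\big)=\psi(x\boxtimes y_s)\pi(a\boxtimes b)$: the left side unpacks via the formula $(x\boxtimes y_s)(a\boxtimes b)=x\alpha_s(a)\boxtimes y_s b$ to $\psi_X(x\alpha_s(a))\boxtimes\psi_Y(y_s b)=\psi_X(x)\pi_X(\alpha_s(a))\boxtimes\psi_Y(y_s)\pi_Y(b)$, while the right side is $\big(\psi_X(x)\boxtimes\psi_Y(y_s)\big)\big(\pi_X(a)\boxtimes\pi_Y(b)\big)$, and I expect these to coincide by Proposition \ref{Formulas} applied in $C\boxtimes D$ once one observes that $\psi_Y(y_s)$ lies in the degree-$s$ part of $D$ (since $\sigma'$ is the induced coaction and $\psi_Y$ is equivariant). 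Third, the inner-product axiom $\pi\big(\langle x\boxtimes y_s,\,x'\boxtimes y\rangle\big)=\psi(x\boxtimes y_s)^*\psi(x'\boxtimes y)$ would be checked the same way, using $\langle x\boxtimes y_s,x'\boxtimes y\rangle=\alpha_{s^{-1}}(\langle x,x'\rangle)\boxtimes\langle y_s,y\rangle$ on the left and the involution formula $(\,\cdot\,)^*$ together with multiplication in $C\boxtimes D$ on the right. Fourth, the left-action axiom $\psi\big((a\boxtimes b_s)(x\boxtimes y)\big)=\pi(a\boxtimes b_s)\psi(x\boxtimes y)$ follows symmetrically from $(a\boxtimes b_s)(x\boxtimes y)=a\gamma_s(x)\boxtimes b_s y$.

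\textbf{Main obstacle.} The crux is confirming that the grading degree of $\psi_Y(y_s)$ in $D$ is exactly $s$, i.e.\ that $\psi_Y$ carries the $s$-homogeneous part $Y_s$ into $D_s$ under the induced coaction $\sigma'$, so that the twist $\alpha_s$ in the $X\boxtimes Y$ formula is matched by the twist $\gamma'_s$ arising from Proposition \ref{Formulas} in $C\boxtimes D$. This equivariance of $(\psi_Y,\pi_Y)$ with respect to the coaction is what makes the two sides align; once it is in hand, every axiom is a mechanical substitution. I would therefore isolate as the first real lemma the statement that a Toeplitz representation intertwines the coaction on $Y$ with the induced coaction on $D$ (and dually intertwines the action on $X$ with that on $C$), and then the three axioms follow by the elementary-tensor computations sketched above, extended to all of $X\boxtimes Y$ by the density and continuity arguments already established in Lemma \ref{IsomLemma}.
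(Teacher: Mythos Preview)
Your proposal is correct and follows essentially the same approach as the paper's proof: both verify the three Toeplitz axioms by direct computation on elementary tensors $x\boxtimes y$ with $y$ homogeneous (and $b$ homogeneous for the left-action axiom), then extend by linearity and continuity. You are somewhat more explicit than the paper in flagging the equivariance point---that $\psi_Y(Y_s)\subseteq D_s$ and $\pi_X\circ\alpha_s=\gamma'_s\circ\pi_X$---which the paper uses without comment (it is implicit in the phrase ``induced action and coaction''); this is a reasonable place to insert a sentence, but it does not constitute a different method.
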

\begin{proof}
Let $y\in Y_s$ for some $s\in G$,
\begin{align*}
\psi\big((x\boxtimes y)(a\boxtimes b)\big)&=\psi\big(x\alpha_s(a)\boxtimes
yb\big)\\
&=\psi_X\big(x\alpha_s(a)\big)\boxtimes\psi_Y(yb)\\
&=\psi_X(x)\pi_X\big(\alpha_s(a)\big)\boxtimes\psi_Y(y)\pi_Y(b)\\
&=\psi_X(x)\gamma'_s\big(\pi_X(a)\big)\boxtimes\psi_Y(y)\pi_Y(b)\\
&=\big(\psi_X(x)\boxtimes\psi_Y(y)\big)\big(\pi_X(a)\boxtimes\pi_Y(b)\big)\\
&=\psi(x\boxtimes y)\pi(a\boxtimes b)
\end{align*}
A similar argument with $b\in B_s$ shows that
$$\psi\big((a\boxtimes b)(x\boxtimes y)\big)=\pi(a\boxtimes b)\psi(x\boxtimes
y)$$
Further, 
\begin{align*}
\psi(x\boxtimes y)^*\psi(x'\boxtimes y')&=\Big(\psi_X(x)
\boxtimes \psi_Y(y)\Big)^*\Big(\psi_X(x')\boxtimes \psi_Y(y')\Big)\\
&=\Big(\gamma'_{s^{-1}}\big(\psi_X(x)^*\big)\boxtimes\psi_Y(y)^*\Big)
\Big(\psi_X(x')\boxtimes\psi_Y(y')\Big)\\
&=\gamma'_{s^{-1}}\big(\psi_X(x)^*\big)\gamma'_{s^{-1}}\big(\psi_X(x')\big)
\boxtimes\psi_Y(y^*)\psi_Y(y')\\
&=\gamma'_{s^{-1}}\big(\psi_X(x)^*\psi_X(x')\big)\boxtimes\psi_Y(y)^*\psi_Y(y')\\
&=\gamma'_{s^{-1}}\Big(\pi_X\big(\langle x,x'\rangle_A\big)\Big)\boxtimes
\pi_Y\big(\langle y,y'\rangle_B\big)\\
&=\pi_X\Big(\gamma_{s^{-1}}\big(\langle x,x'\rangle_A\big)\Big)\boxtimes
\pi_Y\big(\langle y,y'\rangle_B\big)\\
&=\pi\Big(\gamma_{s^{-1}}\big(\langle x,x'\rangle_A\big)\boxtimes\langle
y,y'\rangle_B\Big)\\
&=\pi\big(\langle x\boxtimes y,x'\boxtimes y'\rangle_{A\boxtimes B}\big)
\end{align*}
These equalities can be extended extend linearly and continuously (by the 
linearity and continuity of the maps $\psi_X,\pi_X,\psi_Y,\pi_Y,\psi$, and
$\pi$) to all of $X\boxtimes Y$ 
and $A\boxtimes B$ thus $(\pi,\psi)$ is a Toeplitz representation.
\end{proof}
\begin{lem}
If $(\pi,\psi)$ is the Toeplitz representation in Lemma \ref{ToeplitzRepn},
then:
$$\psi^{(1)}\big(k(S\boxtimes T)\big)=\psi_X^{(1)}(S)\boxtimes\psi_Y^{(1)}
(T)$$
for all $S\in\mathcal K(X)$ and $T\in\mathcal K(Y)$.
\end{lem}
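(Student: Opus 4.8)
The plan is to verify the identity on a dense spanning family and then extend by bilinearity and continuity. Both assignments $(S,T)\mapsto\psi^{(1)}\big(k(S\boxtimes T)\big)$ and $(S,T)\mapsto\psi_X^{(1)}(S)\boxtimes\psi_Y^{(1)}(T)$ are bounded and bilinear, since $k\colon\mathcal K(X)\boxtimes\mathcal K(Y)\to\mathcal K(X\boxtimes Y)$ (the isomorphism of (\ref{TwistedProdOfCompacts})) and $\psi^{(1)}$ are bounded homomorphisms and $\boxtimes$ is bilinear. It therefore suffices to treat the case where $S=\Theta_{x,w}$ is a rank-one operator on $X$ and $T=\Theta_{y,y'}$ is a rank-one operator on $Y$ with $y\in Y_s$ and $y'\in Y_t$ homogeneous, since the corresponding elementary tensors densely span $\mathcal K(X)\boxtimes\mathcal K(Y)$. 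On such generators I will repeatedly use the standard identity $\psi_X^{(1)}(\Theta_{x,w})=\psi_X(x)\psi_X(w)^*$ (and its analogues for $\psi_Y^{(1)}$ and $\psi^{(1)}$), which follows from property (2) of the maps $\psi^{(n)}$ recalled above.

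The first task is to make $k$ explicit on rank-one operators. By Definition \ref{TwistedCorr}, $X\boxtimes Y$ sits inside $L(X)\boxtimes L(Y)=L(X\boxtimes Y)$, and under this identification both $\mathcal K(X\boxtimes Y)$ and $\mathcal K(X)\boxtimes\mathcal K(Y)$ are the upper-left corner. Using the linking-algebra identity $\Theta_{\xi,\eta}=\xi\eta^*$ together with the multiplication and involution formulas of Proposition \ref{Formulas} applied in $L(X)\boxtimes L(Y)$, and noting that $\Theta_{y,y'}$ has grading degree $st^{-1}$ (computed from (\ref{gradingRmod}) and (\ref{gradingIP})), I would compute
$$\Theta_{x\boxtimes y,\,x'\boxtimes y'}=(x\boxtimes y)(x'\boxtimes y')^*=\Theta_{x,\,\gamma_{st^{-1}}(x')}\boxtimes\Theta_{y,y'}.$$
Equivalently, $k\big(\Theta_{x,w}\boxtimes\Theta_{y,y'}\big)=\Theta_{x\boxtimes y,\,\gamma_{ts^{-1}}(w)\boxtimes y'}$. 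The twist $\gamma_{st^{-1}}$ is exactly the trace left by the grading of $\Theta_{y,y'}$ when the involution is pushed across the $\boxtimes$.

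With $k$ in hand, I would apply $\psi^{(1)}$ and simplify in the target $C\boxtimes D$, using the multiplication and involution formulas there, the equivariance $\psi_X\circ\gamma_r=\gamma'_r\circ\psi_X$, and the fact that $\psi_Y$ is graded ($\psi_Y(Y_s)\subseteq D_s$):
\begin{align*}
\psi^{(1)}\big(k(\Theta_{x,w}\boxtimes\Theta_{y,y'})\big)
&=\psi(x\boxtimes y)\,\psi\big(\gamma_{ts^{-1}}(w)\boxtimes y'\big)^*\\
&=\big(\psi_X(x)\boxtimes\psi_Y(y)\big)\big(\gamma'_{s^{-1}}(\psi_X(w)^*)\boxtimes\psi_Y(y')^*\big)\\
&=\psi_X(x)\psi_X(w)^*\boxtimes\psi_Y(y)\psi_Y(y')^*\\
&=\psi_X^{(1)}(\Theta_{x,w})\boxtimes\psi_Y^{(1)}(\Theta_{y,y'}).
\end{align*}
The two twists cancel (the $\gamma_{ts^{-1}}$ coming from $k$ against the $\gamma'_{t^{-1}}$ and $\gamma'_s$ produced by the involution and multiplication in $C\boxtimes D$), leaving precisely $\psi_X^{(1)}(S)\boxtimes\psi_Y^{(1)}(T)$. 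The general case then follows by bilinearity and continuity.

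I expect the main obstacle to be the first step: the isomorphism (\ref{TwistedProdOfCompacts}) is supplied only abstractly, and I must pin down how it acts on rank-one operators, getting both the twist $\gamma_{st^{-1}}$ and its degree correct. Everything downstream is bookkeeping in which this twist must cancel against the one generated by the involution in $C\boxtimes D$; a misconvention in the gradings of Proposition \ref{CorrGrading} or in the formulas of Proposition \ref{Formulas} would spoil the cancellation, so the delicate point is maintaining consistency of the grading and action conventions throughout.
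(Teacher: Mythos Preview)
Your proof is correct and follows essentially the same route as the paper's: both reduce to rank-one operators $\Theta_{x,x'}$ and $\Theta_{y,y'}$ with $y\in Y_s$, $y'\in Y_t$ homogeneous, use the identification $k(\Theta_{x,x'}\boxtimes\Theta_{y,y'})=\Theta_{x\boxtimes y,\,\gamma_{ts^{-1}}(x')\boxtimes y'}$, and then unwind $\psi^{(1)}$ via $\psi(\xi)\psi(\eta)^*$ together with the involution and multiplication formulas in $C\boxtimes D$. Your version is in fact more careful than the paper's in justifying the reduction to generators and in deriving the formula for $k$ from the linking-algebra picture, whereas the paper simply asserts that formula in its first displayed line.
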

\begin{proof}
Without loss of generality we may assume that $S=\Theta_{x,x'}$ and 
$T=\Theta_{y,y}$ with $y\in Y_s$ and $y'\in Y_t$. We have:
\begin{align*}
\psi^{(1)}\big(k(\Theta_{x,x'}\boxtimes\Theta_{y,y'})\big)&=
\psi^{(1)}\big(\Theta_{x\boxtimes y,\gamma_{ts^{-1}}(x')\boxtimes y'}
\big)\\
&=\psi(x\boxtimes y)\psi\big(\gamma_{ts^-1}(x')\boxtimes y'\big)^*\\
&=\Big(\psi_X(x)\boxtimes\psi_Y(y)\Big)\Big(\psi_X\big(\gamma_{ts^{-1}}(x')\big)
\boxtimes\psi_Y(y')\Big)^*\\
&=\Big(\psi_X(x)\boxtimes\psi_Y(y)\Big)\Big(\gamma'_{t^{-1}}\big(
\psi_X\big(\gamma_{ts^{-1}}(x')\big)^*\big)\boxtimes\psi_Y(y')^*\Big)\\
&=\Big(\psi_X(x)\boxtimes\psi_Y(y)\Big)\Big(\gamma'_{s^{-1}}\big(
\psi_X(x')^*\big)\boxtimes\psi_Y(y')^*\Big)\\
&=\big(\psi_X(x)\psi_X(x')^*\big)\boxtimes\big(\psi_Y(y)\psi_Y(x')^*\big)\\
&=\psi_X^{(1)}\big(\Theta_{x,x'}\big)\boxtimes\psi_Y^{(1)}\big(\Theta_{y,y'}\big)
\end{align*}
\end{proof}
\begin{lem}\label{CPCovariant}
Suppose that $X$ and $Y$ are ideal compatible.
If $(\pi_X,\psi_X)$ and $(\pi_Y,\psi_Y)$ are Cuntz-Pimsner covariant
then so is $(\pi,\psi)$.
\end{lem}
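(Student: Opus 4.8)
The plan is to verify the Cuntz-Pimsner covariance condition $\psi^{(1)}\big(\phi(c)\big)=\pi(c)$ for every $c$ in the Katsura ideal $J_{X\boxtimes Y}$, reducing first to elementary tensors and then invoking the preceding lemma. The starting point is the hypothesis of ideal compatibility, which gives $J_{X\boxtimes Y}=J_X\boxtimes J_Y$, so that $J_{X\boxtimes Y}$ is the closed span of elements $a\boxtimes b$ with $a\in J_X$ and $b\in J_Y$. Both maps $c\mapsto\psi^{(1)}\big(\phi(c)\big)$ and $c\mapsto\pi(c)$ are continuous and linear on $J_{X\boxtimes Y}$ (the first because $\phi$ is a homomorphism carrying $J_{X\boxtimes Y}$ into $\mathcal K(X\boxtimes Y)$ by definition of the Katsura ideal, and $\psi^{(1)}$ is a homomorphism on $\mathcal K(X\boxtimes Y)$), so it suffices to check the identity on the spanning elementary tensors.

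So I would fix $a\in J_X$ and $b\in J_Y$. Since $a\in J_X$ we have $\phi_X(a)\in\mathcal K(X)$, and likewise $\phi_Y(b)\in\mathcal K(Y)$. The left action on $X\boxtimes Y$ is $\phi_X\boxtimes\phi_Y$, so $\phi(a\boxtimes b)=k\big(\phi_X(a)\boxtimes\phi_Y(b)\big)$, where $k$ is the isomorphism $\mathcal K(X)\boxtimes\mathcal K(Y)\cong\mathcal K(X\boxtimes Y)$ of (\ref{TwistedProdOfCompacts}); this places us in exactly the situation to which the preceding lemma applies.

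Applying that lemma gives
$$\psi^{(1)}\big(\phi(a\boxtimes b)\big)=\psi^{(1)}\big(k(\phi_X(a)\boxtimes\phi_Y(b))\big)=\psi_X^{(1)}\big(\phi_X(a)\big)\boxtimes\psi_Y^{(1)}\big(\phi_Y(b)\big).$$
Now the Cuntz-Pimsner covariance of $(\pi_X,\psi_X)$ and $(\pi_Y,\psi_Y)$ — valid precisely because $a\in J_X$ and $b\in J_Y$ — turns the right-hand side into $\pi_X(a)\boxtimes\pi_Y(b)=\pi(a\boxtimes b)$, which is the desired identity on elementary tensors. Extending by linearity and continuity across $J_{X\boxtimes Y}=J_X\boxtimes J_Y$ then finishes the argument.

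I expect the only real subtlety to be the identification $\phi(a\boxtimes b)=k\big(\phi_X(a)\boxtimes\phi_Y(b)\big)$: one must confirm that restricting the left action $\phi_X\boxtimes\phi_Y$ to $J_X\boxtimes J_Y$ genuinely factors through the compacts via the isomorphism of (\ref{TwistedProdOfCompacts}), rather than merely landing in $\mathcal L(X)\boxtimes\mathcal L(Y)$. This is where the hypotheses $a\in J_X$ and $b\in J_Y$ are essential, since they force $\phi_X(a)$ and $\phi_Y(b)$ to be compact and thereby make the preceding lemma applicable. Everything else is the routine density-and-continuity extension enabled by ideal compatibility.
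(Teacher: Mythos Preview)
Your proposal is correct and follows essentially the same approach as the paper: use ideal compatibility to reduce to elementary tensors $a\boxtimes b$ with $a\in J_X$, $b\in J_Y$, identify $\phi(a\boxtimes b)=k\big(\phi_X(a)\boxtimes\phi_Y(b)\big)$, apply the preceding lemma, invoke covariance of the factor representations, and extend by linearity and continuity. Your discussion of the subtlety in the compact-operator identification is, if anything, more careful than the paper's own presentation.
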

\begin{proof}
Let $\phi_X$, $\phi_Y$, and $\phi$ be the left action maps on $X$, $Y$
and $X\tensor[_\gamma]\boxtimes{_\sigma}Y$ respectively. Note that 
from the definition of $X\tensor[_\gamma]\boxtimes{_\sigma}Y$
we have that $\phi=j\circ\big(\phi_X\boxtimes\phi_Y\big)$ and thus,in
particular $\phi|_{J_{X\boxtimes Y}}=k\circ\big(\phi_X|_{J_X}\boxtimes
\phi_Y|_{J_Y})$\(\). 
Let $c\in J_{X\boxtimes Y}$. Since $J_{X\boxtimes Y}=J_X\boxtimes J_Y$
we may approximate $c\approx\sum a_i\boxtimes b_i$ for some
$a_i\in J_X$ and $b_i\in J_Y$. We have
\begin{align*}
\psi^{(1)}\big(\phi(c)\big)&\approx\sum\psi^{(1)}
\big(\phi(a_i\boxtimes b_i)\big)\\
&=\sum\psi^{(1)}\big(k\big(\phi_X(a_i)\boxtimes \phi_Y(b_i)\big)\\
&=\sum\psi^{(1)}_X\big(\phi_X(a_i)\big)\boxtimes\psi^{(1)}_Y\big
(\phi_Y(b_i)\big)\\
&=\sum\pi_X(a_i)\boxtimes\pi_Y(b_i)\\
&=\sum\pi(a_i\boxtimes b_i)\\
&\approx\pi(c)
\end{align*}
thus $(\pi,\psi)$ is Cuntz-Pimsner covariant.
\end{proof}
We are now ready to prove our main theorem.
\begin{proof}{(of Theorem \ref{MainResult})}
First, we will construct a $*$-homomorphism $F:\mathcal O_{X\boxtimes Y}
\to\mathcal O_X\boxtimes\mathcal O_Y$. Let $(k_X,k_A)$ and $(k_Y,k_B)$
be the standard Cuntz-Pimsner covariant representations of $X$ and $Y$ 
in $\mathcal O_X$ and $\mathcal O_Y$ and let $\psi:=k_X\boxtimes k_Y$ and
$\pi:=k_A\boxtimes k_B$. Then by Lemma \ref{CPCovariant}, $(\psi,\pi)$ is
a Cuntz-Pimsner covariant representation of $X\boxtimes Y$ in
$\mathcal O_X\boxtimes\mathcal O_Y$. Thus, by the universal property
there is a unique homomorphism $F:\mathcal O_{X\boxtimes Y}\to\mathcal O_X\boxtimes
\mathcal O_Y$ such that
$$(\psi,\pi)=(F\circ k_{X\boxtimes Y},F\circ k_{A\boxtimes B})$$
The gauge actions $\Gamma_X$ and $\Gamma_Y$ on $\mathcal O_X$ and
$\mathcal O_Y$ give rise to $\mathbb Z$ gradings $\{\mathcal O_X^n\}_
{n\in\mathbb Z}$ and $\{\mathcal O_Y^n\}_{n\in\mathbb Z}$. By Proposition
\ref{GradingOfProduct} the sets 
$$S_n:=\{x\boxtimes y:x\in\mathcal O_X^n,y\in\mathcal O_Y^n\}$$
give a $\mathbb Z$-grading of $\mathcal O_X\boxtimes_\mathbb T\mathcal O_Y$.
Notice that we have the following:
\begin{align}
&F\big(k_{A\boxtimes B}(a\boxtimes b)\big)=\pi(a\boxtimes b)=
(k_A\boxtimes k_B)(a\boxtimes b)\in S_0\label{AtimesBinS_0}\\
&F\big(k_{X\boxtimes Y}(x\boxtimes y)\big)=\psi(x\boxtimes y)=(k_X\boxtimes
k_Y)(x\boxtimes y)\subseteq S_1\label{XtimesYinS_1}
\end{align}
For $a\in A,b\in B,x\in X, y\in Y$.
Since the image of $F$ is generated by $F\big(k_{A\boxtimes B}
(A\boxtimes B\big)$
and $F\big(k_{X\boxtimes Y}(X\boxtimes Y)\big)$
we see that the image of $F$ lies inside $\mathcal O_X\boxtimes_
\mathbb T\mathcal O_Y$. Furthermore, note that the grading $\{S_n\}_
{n\in\mathbb Z}$ gives rise to an action $\Gamma$ of $\mathbb T$
such that $\Gamma_z(s)=z^ns$ for $s\in S_n$. In particular, by 
(\ref{AtimesBinS_0}) 
we have that $\Gamma_z\big(\pi(c)\big)=\pi(c)$ for all $c\in A\boxtimes B$
and by (\ref{XtimesYinS_1}) we have that 
$\Gamma_z\big(\psi(w)\big)=z\psi(w)$ for all
$w\in X\boxtimes Y$. Thus $\Gamma$ is a gauge action for $(\psi,\pi)$.
Since $k_A$, $k_B$, $k_X$, and $k_Y$ are all injective, $\psi$ and $\pi$
will be injective. By the Gauge Invariant Uniqueness Theorem, we have that
$F$ is injective.

It remains to show that $F$ is surjective onto $\mathcal O_X\boxtimes_\mathbb
T\mathcal O_Y$. Recall that, since $X$ and $Y$ are full, so are the gradings
$\{\mathcal O_X^n\}_{n\in\mathbb Z}$ and $\{\mathcal O_Y^n\}_{n\in\mathbb
Z}$ and thus by Lemma \ref{Full} so is the grading $\{S_n\}_{n\in\mathbb
Z}$. Thus $\mathcal O_X\boxtimes_\mathbb T\mathcal O_Y$ is generated by the
elements of $S_1$ (see Lemma 3.7 of \cite{Morgan}). 
Therefore, to show that $F$
is surjective, it suffices to show that $S_1$ is in the image of $F$.

Recall that $\mathcal O_X^1$ is densely spanned
by elements of the form:
$k_X^{n+1}(x) k^n_X(x')^*$. Similarly,
$\mathcal O_Y^1$ is densely spanned by elements of the form
$k_Y^{n+1}(y)k^n_Y(y')^*$. 
Thus the elements
\begin{align}
k^{n+1}_X(x)k^n_X(x')^*\boxtimes k_Y^{m+1}(y) k^m_Y(y')^*\label{GeneratorForS_1}
\end{align}
are dense in $S_1$.
Fix such an element. We may assume without loss of
generality that $y$ is a tensor product of homogeneous elements
(for if not, it can be approximated by a sum of such products).
In other words, we may assume that $y=y_1\otimes\cdots\otimes y_{m+1}$ 
where each $y_i$ 
homogeneous with respect to the $G$-grading of $Y$. Let us say that
$y_i\in Y_{s_i}$ for each $i$. Let $s:=s_1\cdots s_{m+1}$. Then 
$$k^{m+1}_Y(y)=k_Y(y_1)\cdots k_Y(y_{m+1})\in\mathcal O_Y^s$$
Similarly, we can assume that $y'=y'_1\otimes\cdots\otimes y'_m$
with $y'_i\in Y_{t_i}$ and then $k_Y^m(y')\in\mathcal O_Y^t$
where $t:=t_1\cdots t_m$.

Let us assume $n\leq m$. Then $m=n+l$ for some $l\geq0$.
Then we can factor 
$y=y^{(1)}\otimes y^{(2)}$ and $y'=y'^{(1)}\otimes y'^{(2)}$.
Since $X$ is Katsura nondegenerate, we can factor
$x=x_0a$ and $x'=x'_0a'$ with $x_0,x'_0\in X$ and $a,a'\in J_X$.
With this in mind, we can factor (\ref{GeneratorForS_1}) as follows:
\begin{align*}
k&^{n+1}_X(x)k^n_X(x')^*\boxtimes k_Y^{m+1}(y) k^m_Y(y')^*\\
&=\big(k_X^{n+1}(x)\boxtimes k_Y^{m+1}(y)\big)\Big({\gamma'}_{s^{-1}}
\big(k_X^n(x')\big)^*\boxtimes k_Y^m(y')^*\Big)\\
&=\big(k_X^{n+1}(x_0)k_A(a)\boxtimes k_Y^{n+1}(y^{(1)})k_Y^l(y^{(2)})\big)
\\&\quad\Big({\gamma'}_{s^{-1}}\big(k_A(a')\big)^*{\gamma'}_{s^{-1}}
\big(k_X^n({x'}_0)\big)^*\boxtimes k_Y^l({y'}^{(2)})^*k_Y^n({y'}^{(1)})^*\Big)\\
&=\Big(k_X^{n+1}(x_0)\boxtimes k_Y^{n+1}(y^{(1)})\Big)\Big({\gamma'}_{s_1^{-1}}
\big(k_A(a)\big)\boxtimes k_Y^l(y^{(2)})\Big)\\
&\quad\Big({\gamma'}_{s^{-1}}\big(k_A({a'})\big)^*
\boxtimes k_Y^l({y'}^{(2)})^*\Big)\Big({\gamma'}_{t_2s^{-1}}\big(k_X^n({x'}_0)\big)^*
\boxtimes k_Y^n({y'}^{(1)})^*\Big)\\
&=\Big(k_X^{n+1}(x_0)\boxtimes k_Y^{n+1}(y^{(1)})\Big)\Big({\gamma'}_{s_1^{-1}}
\big(k_A(a)\big){\gamma'}_{s_2s^{-1}}\big(k_A({a'})\big)^*\boxtimes k_Y^l(y^{(2)})
k_Y^l({y'}^{(2)})^*\Big)\\
&\quad\Big({\gamma'}_{t_2s^{-1}}\big(k_X^n({x'}_0)\big)^*
\boxtimes k_Y^n({y'}^{(1)})^*\Big)\\
&=\Big(k_X^{n+1}(x_0)\boxtimes k_Y^{n+1}(y^{(1)})\Big)\Big(
k_A\big(\alpha_{s_1^{-1}}(a)\alpha_{s_2s^{-1}}({a'})^*\big)\boxtimes k_Y^l(y^{(2)})
k_Y^l({y'}^{(2)})^*\Big)\\
&\quad\Big({\gamma'}_{t_1t_2s^{-1}}\big(k_X^n({x'}_0)\big)
\boxtimes k_Y^n({y'}^{(1)})\Big)^*\\
&=\Big(k_X^{n+1}(x_0)\boxtimes k_Y^{n+1}(y^{(1)})\Big)\Big(
k_X^{(1)}\circ\phi_X\big(\alpha_{s_1^{-1}}(a)\alpha_{s_2s^{-1}}({a'})^*\big)
\boxtimes k_Y^{(1)}\big(\Theta_{y^{(2)},{y'}^{(2)}}\big)
\Big)\\
&\quad\Big(k_X^n\big(\alpha_{ts^{-1}}({x'}_0)\big)
\boxtimes k_Y^n({y'}^{(1)})\Big)^*\\
&=\psi^{n+1}\Big(x_0\boxtimes y^{(1)}\Big)\psi^{(1)}\Big(\phi_X\big(\alpha_
{s_1^{-1}}(a)\alpha_{s_2s^{-1}}({a'})^*\big)\boxtimes\Theta_{y^{(2)},{y'}^{(2)}}\Big)\\
&\quad\psi^n\Big(\alpha_{ts^{-1}}({x_0}')\boxtimes {y'}^{(1)}\Big)^*
\end{align*}

Since all three factors in the final line are in the algebra generated
by $(\psi,\pi)$, we know that $k^{n+1}_X(x)k^n_X(x')^*\boxtimes k_Y^{m+1}(y)
k^m_Y(y')^*$ must be in the image of $F$. If $n>m$ we can factor
$x=x^{(1)}\otimes x^{(2)}$, $x'=x'^{(1)}\otimes x'^{(2)}$ $y=y_0a$, and
$y'=y_0'a'$ and apply a similar argument. So the image of $F$ generates $S_1$
and thus $F$ is surjective onto $\mathcal O_X\tensor[_{\gamma'}]\boxtimes{_{\sigma'}}
_\mathbb\mathcal O_Y$.
Hence we have established that $\mathcal O_{X\tensor[_\gamma]\boxtimes{_\sigma}Y}
\cong\mathcal O_X\tensor[_{\gamma'}]\boxtimes{_{\sigma',\mathbb T}}\mathcal O_Y$
\end{proof}

\section{Examples}
In this section, we will apply our main theorem to some of the examples 
of twisted tensor products of correspondences we discussed earlier.
\begin{ex}
Let $X$ and $Y$ be $\mathbb Z_2$-graded correspondences over 
$\mathbb Z_2$-graded $C^*$-algebras $A$ and $B$. As in Example 
\ref{GradedPorductCorr},
let $(\gamma,\alpha)$ be the action of $\mathbb Z_2$ on $(X,A)$
associated to the grading on $X$ and let $(\sigma,\delta)$ be the
coaction of $\mathbb Z_2$ on $(Y,B)$ associated to the grading of
$Y$. Let $\gamma'$ be the action of $\mathbb Z_2$ on $\mathcal O_X$ 
induced by $(\gamma,\alpha)$ and let $\sigma'$ be the coaction of
$\mathbb Z_2$ on $\mathcal O_Y$ induced by $(\sigma,\delta)$. Suppose
$X$ and $Y$ are Katsura nondegenerate and ideal compatible with respect 
to $\sigma$ and $\gamma$. Then
by our main result together with Example \ref{GradedPorductCorr}, we see
that 
$\mathcal O_{X\widehat\otimes Y}\cong\mathcal O_{X\boxtimes Y}\cong
\mathcal O_X\tensor[_{\gamma'}]\boxtimes{_{\sigma',\mathbb T}}\mathcal O_Y$.

Note that $\gamma'$ and $\sigma'$ give rise to $\mathbb Z_2$-gradings
on $\mathcal O_X$ and $\mathcal O_Y$ so it makes sense to speak of the
$\mathbb Z_2$-graded tensor product $\mathcal O_X\widehat\otimes\mathcal
O_Y$. Let $\varphi:\mathcal O_X\widehat\otimes\mathcal O_Y\to\mathcal O_X
\boxtimes\mathcal O_Y$ be the map $w\widehat\otimes z\mapsto w\boxtimes
z$. Note that  
\begin{align*}
\varphi\big((w_1\widehat\otimes z_1)(w_2\widehat\otimes z_2)\big)&=
(-1)^{\partial z_1\partial w_2}\varphi(w_1w_2\widehat\otimes z_1z_2)\\
&=(-1)^{\partial z_1\partial w_2}(w_1w_2\boxtimes z_1z_2)\\
&=w_1\gamma'_{\partial z_1}(w_2)\boxtimes z_1z_2\\
&=(w_1\boxtimes z_1)(w_2\boxtimes z_2)\\
&=\varphi(w_1\widehat\otimes z_1)\varphi(w_2\widehat\otimes z_2)
\end{align*}
and also
\begin{align*}
\varphi\big((w\widehat\otimes z)^*\big)&=(-1)^{\partial w\partial z}\varphi(
w^*\widehat\otimes z^*)\\
&=(-1)^{\partial w\partial z}(w^*\boxtimes z^*)\\
&=(-1)^{\partial w^*\partial z^*}(w^*\boxtimes z^*)\\
&=\gamma_{\partial z^*}(w^*)\boxtimes z^*\\
&=(w\boxtimes z)^*\\
&=\varphi(w\widehat\otimes z)^*
\end{align*}
therefore $\varphi$ is a $*$-homomorphism. Further, $\varphi$ takes a 
densely spanning set (the elementary tensors $w\widehat\otimes z$) in
$\mathcal O_X\widehat\otimes \mathcal O_Y$ bijectively onto a densely 
spanning set
(the elementary tensors $w\boxtimes z$) in $\mathcal O_X\tensor[_{\gamma'}]\boxtimes
{_{\sigma'}}\mathcal O_Y$ 
so $\varphi$ is an isomorphism. Thus we have that
$$\mathcal O_{X\widehat\otimes Y}\cong\mathcal O_X\widehat\otimes_\mathbb
T\mathcal O_Y$$
\end{ex}
\begin{ex}
In this example we will show that the graph algebra of the product 
graph constructed in Example \ref{GraphProductCorr} is a twisted tensor 
product of the graph algebras of the underlying graphs. Suppose $E$ and $F$
are directed graphs, $\alpha^E$ an action of a discrete group $G$ on $E$
and let $\delta$ be a labelling of the edges of $F$ by elements of $G$.
Let $(\gamma,\alpha)$ and $(\sigma,\iota)$ be the associated action
and coaction on the graph correspondences $(X,A)=\big(X(E),c_0(E^0)\big)$
and $(Y,B)=\big(X(F),c_0(F^0)\big)$. Suppose further, that $E$ and $F$
have no sinks, no proper sources (see Definition \ref{ProperSource}),
and that no infinite receiver in $E$ or $F$ emits an edge.
Then we have
that the graph correspondences $(X,A)$ and $(Y,B)$ are full, and by 
Propositions \ref{GraphKatNonDeg}
and \ref{GraphIdealComp} they are Katsura nondegenerate and ideal compatible
with respect to $\gamma$ and $\sigma$. Thus we may apply our main result.
Let $\gamma'$ be the induced action on $\mathcal O_X\cong C^*(E)$ and 
$\sigma'$ be the induced coaction on $\mathcal O_Y\cong C^*(F)$. We have
$$C^*(E\tensor[_{\alpha^E}]\times{_\delta}F)\cong\mathcal O_{X\tensor[_\gamma]
\boxtimes{_\sigma}Y}\cong\mathcal O_X\tensor[_{\gamma'}]\boxtimes{_{\sigma',
\mathbb T}}\mathcal O_Y\cong C^*(E)\tensor[_{\gamma'}]\boxtimes{_{\sigma',
\mathbb T}}C^*(F)$$
\end{ex}

Before we see how our main result applies to crossed products, we will need
a few lemmas.
\begin{lem}
Let $(A,G,\alpha)$ be a dynamical system, $(B,G,\delta)$ a coaction
and $C$ a $C^*$-algebra. There exists an isomorphism 
$$\sigma_{23}:A\tensor[_\alpha]\boxtimes{_{\Sigma_{23}(\delta\otimes
\emph{id}_C)}}(B\otimes C)
\to(A\otimes C)\tensor[_{\alpha\otimes\iota}]\boxtimes{_\delta}B$$
\end{lem}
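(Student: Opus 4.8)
The plan is to exhibit the map $\sigma_{23}$ concretely as the flip of the $B$- and $C$-tensor legs and to verify, on generators, that this flip carries one twisted tensor product onto the other. The conceptual reason such an isomorphism should exist is that both sides are built from exactly the same $G$-data, namely the action $\alpha$ on $A$ and the coaction $\delta$ on $B$, while the extra algebra $C$ carries only the trivial action $\iota$; since the twisted tensor product only ``entangles'' the $A$- and $B$-factors through the Heisenberg pair, the passive factor $C$ can be transferred freely from the coaction side to the action side.

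First I would record the grading on $B\otimes C$ coming from $\epsilon:=\Sigma_{23}(\delta\otimes\text{id}_C)$. Since $\delta(b_s)=b_s\otimes u_s$ for $b_s\in B_s$, we get $\epsilon(b_s\otimes c)=(b_s\otimes c)\otimes u_s$, so the homogeneous component $(B\otimes C)_s$ is $\overline{\text{span}}(B_s\otimes C)$. With this in hand, Proposition \ref{Formulas} applied on the left (to $(A,G,\alpha)$ and $(B\otimes C,G,\epsilon)$) and on the right (to $(A\otimes C,G,\alpha\otimes\iota)$ and $(B,G,\delta)$) produces two parallel sets of multiplication and adjoint formulas. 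Defining $\sigma_{23}$ on elementary tensors by $a\boxtimes(b\otimes c)\mapsto(a\otimes c)\boxtimes b$, a direct comparison shows it respects products and adjoints on homogeneous elementary tensors: the factor $\alpha_s$ acts on $A$ in the same way on both sides, and $\iota$ leaves $C$ untouched, so the two formula-sets match term by term.

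The more delicate point is to promote this algebraic bijection of spanning sets to an isometric $*$-isomorphism, since a $*$-homomorphism defined only on a dense $*$-subalgebra need not be bounded. To handle this I would pass to the concrete Heisenberg realization of Definition \ref{TPdef}, using the pair $(m,\lambda)$ on $L^2(G)$: the left-hand algebra then sits inside $A\otimes B\otimes C\otimes\mathcal K(L^2(G))$ and the right-hand algebra inside $A\otimes C\otimes B\otimes\mathcal K(L^2(G))$. Let $\Phi$ be the isometric $*$-isomorphism of these ambient algebras that exchanges the $B$- and $C$-legs. Writing the generators as $i_A(a)=(\text{id}_A\otimes m)\delta^\alpha(a)$ in the $A$- and $\mathcal K$-legs and $i_{B\otimes C}(b_s\otimes c)=b_s\otimes c\otimes\lambda_s$ in the $B$-, $C$- and $\mathcal K$-legs, a short computation (using $e_{tt}\lambda_s=e_{t,s^{-1}t}$ for the matrix units of $\mathcal K(L^2(G))$) gives $a\boxtimes(b_s\otimes c)=\sum_t\alpha_{t^{-1}}(a)\otimes b_s\otimes c\otimes e_{t,s^{-1}t}$, and the analogous expansion of $(a\otimes c)\boxtimes b_s$ on the right is precisely its image under $\Phi$. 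Thus $\Phi$ carries the densely spanning generators of the left-hand product exactly onto those of the right-hand one, so it restricts to $\sigma_{23}$ on a dense $*$-subalgebra; being the restriction of an isometry with the reverse leg-flip as a two-sided inverse, $\sigma_{23}$ extends to an isometric $*$-isomorphism of the closures.

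I expect the boundedness and surjectivity step to be the main obstacle: the formula-level verification via Proposition \ref{Formulas} is routine, but the honest justification that $C$ may be moved between the two tensor factors is the concrete leg-flip on the Heisenberg realization rather than any abstract extension argument, and some care is needed to track the $\mathcal K(L^2(G))$-leg correctly through $\Phi$ so that the generators of the two products are seen to correspond.
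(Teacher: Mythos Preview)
Your proposal is correct and follows essentially the same route as the paper: both realize the two twisted tensor products inside $A\otimes B\otimes C\otimes\mathcal B(L^2(G))$ and $A\otimes C\otimes B\otimes\mathcal B(L^2(G))$ via the Heisenberg pair $(m,\lambda)$, then restrict the ambient leg-flip $\Sigma_{23}$ and verify on the spanning generators that it carries $i_A(a)i_{B\otimes C}(b\otimes c)$ to $i_{A\otimes C}(a\otimes c)i_B(b)$. The paper's computation stays at the level of leg-notation manipulations rather than expanding into matrix units as you do, and it omits your preliminary algebraic check via Proposition~\ref{Formulas} (which, as you correctly note, is not by itself enough), but the substance of the argument is the same.
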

\begin{proof}
By definition, $$A\tensor[_\alpha]\boxtimes{_{\Sigma_{23}(\delta\otimes
\emph{id}_C)}}(B\otimes C)=i_A(A)\cdot i_{B\otimes C}(B\otimes C)
\subseteq A\otimes B\otimes C\otimes \mathcal B\big(L^2(G)\big)$$
where $i_A=\delta^\alpha_{1m}$ and $$i_{B\otimes C}=\circ\big(\Sigma_{23}
(\delta\otimes\text{id}_C)\big)_{23\lambda}$$. Similarly, 
$$(A\otimes C)\tensor[_{\alpha\otimes\iota}]\boxtimes{_\delta}B=
i_{A\otimes C}(A\otimes C)\cdot i_B(B)\subseteq A\otimes C\otimes B
\otimes \mathcal B\big(L^2(G)\big)$$
where $i_{A\otimes C}=m_4\circ\delta^{\alpha\otimes\iota}_{12m}$ and $i_B=
\lambda_4\circ\delta
_{3\lambda}$. Since $\Sigma_{23}:A\otimes B\otimes C\otimes\mathcal B\big(L^2(G)\big)
\to A\otimes C\otimes B\otimes\mathcal B\big(L^2(G)\big)$ is an isomorphism,
we can let $\sigma_{23}$ be the restriction of $\Sigma_{23}$ to the subalgebra
$A\tensor[_\alpha]\boxtimes{_{\Sigma_{23}(\delta\otimes
\text{id}_C)}}(B\otimes C)$. From there it suffices to show that the image
of $\sigma_{23}$ is equal to $(A\otimes C)\tensor[_{\alpha\otimes\iota}]
\boxtimes{_\delta}B$. $A\tensor[_\alpha]\boxtimes{_{\Sigma_{23}(\delta\otimes
\emph{id}_C)}}(B\otimes C)$ is densely spanned by elements of the form
$i_A(a)i_{B\otimes C}(b\otimes c)$ and $(A\otimes C)\tensor[_{\alpha\otimes\iota}]
\boxtimes{_\delta}B$ is densely spanned by elements of the form
$i_{A\otimes C}(a\otimes c)i_B(b)$. Since $\sigma_{23}$ is a restriction
of the linear norm preserving map $\Sigma_{23}$, it too is linear and norm
preserving, thus to show that the image of $\sigma_{23}$ is 
$(A\otimes C)\tensor[_{\alpha\otimes\iota}]\boxtimes{_\delta}B$, it suffices
to show that $\sigma_{23}\big(i_A(a)i_{B\otimes C}(b\otimes c)\big)=
i_{A\otimes C}(a\otimes c)i_B(b)$. To see this, note that
\begin{align*}
\sigma_{23}\big(i_A(a)i_{B\otimes C}(b\otimes c)\big)&=\Sigma_{23}
\Big(\delta^{\alpha}(a)_{1m}\big(\Sigma_{23}(\delta\otimes\text{id}_C)(b\otimes
c)\big)_{23\lambda}\Big)\\
&=\Sigma_{23}\Big(\delta^\alpha(a)_{1m}\big(\Sigma_{23}\big(\delta(b)\otimes
c\big)\big)_{23\lambda}\Big)\\
&=\delta^\alpha(a)_{1m}\Sigma_{23}\big(c_3\delta(b)_{2\lambda}\big)\\
&=\delta^\alpha(a)_{1m}c_2\delta(b)_{3\lambda}\\
&=\Big(\Sigma_{23}\big(\delta^{\alpha}(a)\otimes c\big)\Big)_{12m}\delta(b)
_{3\lambda}\\
&=\delta^{\alpha\otimes\iota}(a\otimes c)_{12m}\delta(b)_{3\lambda}\\
&=i_{A\otimes C}(a\otimes c)i_B(b)
\end{align*}
thus $\sigma_{23}:A\tensor[_\alpha]\boxtimes{_{\Sigma_{23}(\delta\otimes
\emph{id}_C)}}(B\otimes C)
\to(A\otimes C)\tensor[_{\alpha\otimes\iota}]\boxtimes{_\delta}B$ is an isomorphism.
\end{proof}
\begin{lem}\label{CPalgofXtimesB}
Let $(\gamma,\alpha)$ be an action of a discrete group $G$ on
a correspondence $(X,A)$ and let $(B,G,\delta)$ be a coaction.
If $J_{X\tensor[_\gamma]\boxtimes{_\delta} B}=J_X\tensor[_\alpha]\boxtimes
{_\delta} B$ and $J_X$ is Katsura-nondegenerate,
then
$$\mathcal O_{X\tensor[_\gamma]\boxtimes{_\delta} B}\cong\mathcal O_X\tensor
[_{\gamma'}]\boxtimes{_\delta} B$$
where $B$ is viewed as a correspondence over itself in the left-hand-side
and $\gamma'$ is the induced action on $\mathcal O_X$.
\end{lem}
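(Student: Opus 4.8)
The plan is to construct the isomorphism directly, reusing the machinery built for Theorem \ref{MainResult} but with $Y$ replaced by $B$, regarded as the identity correspondence over itself, and with the \emph{identity} representation of $B$ in place of the universal representation of $\mathcal O_B$. The key preliminary observation is that the identity correspondence $B$ is full, has $J_B=B$, and that the pair $(\text{id}_B,\text{id}_B)$ is a Cuntz--Pimsner covariant representation of it: the Toeplitz relations hold trivially, and since $\mathcal K(B)\cong B$ canonically with $\psi^{(1)}(\Theta_{b,b'})=bb'^*$, the covariance relation $\psi^{(1)}\big(\phi_B(b)\big)=b=\text{id}_B(b)$ holds on all of $J_B=B$. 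Because we use this identity representation rather than $\mathcal O_B$, no $\mathbb T$-balancing arises on the $B$-factor, which is exactly why the target is the ordinary twisted tensor product $\mathcal O_X\tensor[_{\gamma'}]\boxtimes{_\delta} B$ rather than a balanced one.

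First I would set $\pi:=k_A\boxtimes\text{id}_B$ and $\psi:=k_X\boxtimes\text{id}_B$, where $(k_X,k_A)$ is the universal Cuntz--Pimsner covariant representation of $X$. Applying Lemma \ref{ToeplitzRepn} with $C=\mathcal O_X$, $D=B$, and $(\pi_Y,\psi_Y)=(\text{id}_B,\text{id}_B)$ shows that $(\pi,\psi)$ is a Toeplitz representation of $X\tensor[_\gamma]\boxtimes{_\delta} B$ in $\mathcal O_X\tensor[_{\gamma'}]\boxtimes{_\delta} B$. Since $X$ and $B$ are ideal compatible --- this is precisely the hypothesis $J_{X\boxtimes B}=J_X\boxtimes B=J_X\boxtimes J_B$ --- Lemma \ref{CPCovariant} upgrades $(\pi,\psi)$ to a Cuntz--Pimsner covariant representation, and the universal property of $\mathcal O_{X\boxtimes B}$ furnishes a homomorphism $F:\mathcal O_{X\boxtimes B}\to\mathcal O_X\boxtimes B$ with $F\circ k_{X\boxtimes B}=\psi$ and $F\circ k_{A\boxtimes B}=\pi$.

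For injectivity I would invoke the Gauge-Invariant Uniqueness Theorem. The gauge action $\Gamma_X$ on $\mathcal O_X$ commutes with the induced $G$-action $\gamma'$ (the latter preserves the gauge grading $\{\mathcal O_X^n\}$), so $\Gamma_X\boxtimes\text{id}_B$ is a well-defined $\mathbb T$-action on $\mathcal O_X\boxtimes B$. It fixes $\pi(A\boxtimes B)\subseteq\mathcal O_X^0\boxtimes B$ and scales $\psi(X\boxtimes B)\subseteq\mathcal O_X^1\boxtimes B$ by $z$, hence is a gauge action for $(\pi,\psi)$; as $k_A$, and therefore $\pi$, is injective, the theorem gives that $F$ is an isomorphism onto $C^*(\pi,\psi)$.

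The remaining, and main, task is surjectivity, i.e.\ $C^*(\pi,\psi)=\mathcal O_X\boxtimes B$. Here I would mirror the factorization computation of Theorem \ref{MainResult} specialized to $Y=B$. As $\mathcal O_X\boxtimes B$ is densely spanned by elements $w\boxtimes b$ with $w=k_X^{p}(\xi)k_X^{q}(\eta)^*$ and $b\in B$, it suffices to realize each as a product of images of $\psi$, $\psi^{(1)}$, and $\pi^*$: the matched tensor lengths $p,q$ let one assemble $w$ from $\psi^{p}$ and $(\psi^{q})^*$, using the multiplication and involution formulas of Proposition \ref{Formulas} to untwist via $\gamma'$, and Katsura nondegeneracy of $X$ to factor $\xi=\xi_0 a$ with $a\in J_X$ whenever a $J_X$-factor must be absorbed into a $\psi^{(1)}(\phi_X(\cdot)\boxtimes\Theta)$ term, exactly as in the main theorem. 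Since $B$ is the passive identity factor, no fullness of $X$ and no saturation of a balanced grading are needed --- every gauge degree of $\mathcal O_X$ is produced directly --- which is what permits fullness to be dropped from the hypotheses. I expect the only genuinely delicate point to be the bookkeeping of the $G$-grading of $B$ against the $\gamma'$-twists throughout this factorization.
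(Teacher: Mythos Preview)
Your approach is correct but takes a genuinely different route from the paper. The paper does \emph{not} rerun the machinery of Theorem \ref{MainResult}; instead it applies that theorem directly (with $Y=B$) to obtain $\mathcal O_{X\boxtimes B}\cong\mathcal O_X\boxtimes_{\mathbb T}\mathcal O_B$, then uses the standard identification $\mathcal O_B\cong B\otimes C(\mathbb T)\cong B\otimes C^*(\mathbb Z)$ together with the rearrangement isomorphism $\sigma_{23}$ of the preceding lemma to collapse $\mathcal O_X\boxtimes_{\mathbb T}\big(B\otimes C^*(\mathbb Z)\big)$ down to $\mathcal O_X\boxtimes B$, essentially by recognising the balanced subalgebra as the image of the gauge coaction $\varepsilon$ on $\mathcal O_X$. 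Your direct construction, by contrast, feeds the identity representation $(\text{id}_B,\text{id}_B)$ into Lemmas \ref{ToeplitzRepn} and \ref{CPCovariant} and never sees a balanced product or the $\sigma_{23}$ lemma at all. This buys you two things: you avoid the extra rearrangement lemma entirely, and, as you observe, you do not need fullness of $X$ (which the paper's invocation of Theorem \ref{MainResult} implicitly uses, even though the lemma statement omits it). One small overcomplication in your sketch: the surjectivity step is actually easier than you indicate, since with $Y=B$ there is no tensor-degree mismatch between the two factors, so $k_X^p(\xi)k_X^q(\eta)^*\boxtimes b$ is obtained directly as $\psi^p(\xi\boxtimes b_1)\psi^q(\eta'\boxtimes b_2)^*$ for suitable homogeneous $b_1,b_2$ with $b_1b_2^*$ approximating $b$ and $\eta'$ a $\gamma$-twist of $\eta$; no $\psi^{(1)}$ term or Katsura-nondegeneracy factorisation is required at this stage.
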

\begin{proof}
In this context, our main result gives us 
$$\mathcal O_{X\tensor[_\gamma]\boxtimes{_\delta} B}\cong\mathcal O_X
\tensor[_{gamma'}]\boxtimes{_{delta,\mathbb T}}\mathcal O_B$$
Recall that $\mathcal O_B\cong B\otimes C(\mathbb T)$ and the gauge action
corresponds to $\iota\otimes\lambda$ where $\iota$ is the trivial action
and $\lambda$ is left translation. Using the isomorphism between $C(\mathbb
T)$ and $C^*(\mathbb Z)$, we see that 
\begin{align*}
\mathcal O_X\boxtimes_\mathbb T \mathcal O_B&\cong\mathcal O_X\boxtimes_
\mathbb T\big(B\otimes C^*(\mathbb Z)\big)\\
&=\overline{\text{span}}\big\{\mathcal O_X^n\boxtimes\big(B\otimes C^*(\mathbb
Z)^n\big)\big\}\\
&=\overline{\text{span}}\big\{x\boxtimes(b\otimes u_n)\in\mathcal O_X
\boxtimes\big(B\otimes C^*(\mathbb Z)\big):x\in\mathcal O_X^n\big\}
\end{align*}
where $u_n$ is the unitary in $C^*(\mathbb Z)$ associated to $n\in
\mathbb Z$. Applying the isomorphism $\sigma_{23}$ from the previous lemma,
we see that the above is isomorphic to 
\begin{align*}
&\overline{\text{span}}\big\{(x\otimes u_n)\boxtimes b\in\big(\mathcal O_X\otimes
C^*(\mathbb Z)\big)\boxtimes B:x\in \mathcal O_X^n\big\}\\
=&\overline{\text{span}}\big\{x\otimes u_n\in\mathcal O_X\otimes C^*(\mathbb
Z):x\in\mathcal O_X^n\big\}\boxtimes B
\end{align*}
The grading $\{\mathcal O_X^n\}_{n\in\mathbb Z}$ corresponds to a coaction
$\varepsilon$ such that $\varepsilon(x)=x\otimes u_n$ for all $x\in\mathcal
O_X^n$. Continuing from above, we have:
\begin{align*}
&\overline{\text{span}}\big\{x\otimes u_n\in\mathcal O_X\otimes C^*(\mathbb
Z):x\in\mathcal O_X^n\big\}\boxtimes B\\
=&\overline{\text{span}}\big\{\varepsilon(x):x\in\mathcal O_X^n\big\}\boxtimes B\\
=&\varepsilon\big(\mathcal O_X\big)\boxtimes B\\
\cong&\mathcal O_X\boxtimes B
\end{align*}
\end{proof}
\begin{lem}\label{CPalgofBtimesX}
Let $(\sigma,\delta)$ be a coaction of a discrete group $G$
on a full correspondence $(X,A)$. Let $\alpha$ be an action of $G$ on a $C^*$-algebra
$B$. Suppose $J_{X\tensor[_\sigma]\boxtimes{_\alpha}B}=J_X\tensor[_\delta]\boxtimes
{_\alpha} B$ and
$J_X$ is Katsura-nondegenerate.
Then
$$\mathcal O_{X\tensor[_\sigma]\boxtimes{_\alpha} B}\cong\mathcal O_X\tensor
[_{\sigma'}]\boxtimes{_\alpha}B$$
where $\sigma'$ is the induced coaction on $\mathcal O_X$
\end{lem}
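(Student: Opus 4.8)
The plan is to treat this as the mirror image of Lemma \ref{CPalgofXtimesB}: here it is the \emph{action} factor $B$ (viewed as a correspondence over itself) that plays the role of the ``identity'' correspondence, while the coaction factor $X$ is the general one, so the roles of action and coaction are interchanged relative to that lemma. Because of this interchange I cannot simply invoke Lemma \ref{CPalgofXtimesB}, but I can follow its proof step for step after first rewriting everything in the convention of Theorem \ref{MainResult} (action on the left, coaction on the right).

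First I would apply the flip isomorphism $X\tensor[_\sigma]\boxtimes{_\alpha}B\cong B\tensor[_\alpha]\boxtimes{_\sigma}X$, the correspondence-level analogue of the algebra isomorphism $a\boxtimes b\mapsto b\boxtimes a$ from the definition of the discrete group twisted tensor product (obtained by applying that isomorphism to the linking algebras as in Definition \ref{TwistedCorr}). I then need to verify that the hypotheses of Theorem \ref{MainResult} hold for the pair $(B,X)$: the self-correspondence $B$ is full and has injective left action implemented by compacts, so $J_B=B$ and it is Katsura nondegenerate; $X$ is full and Katsura nondegenerate by hypothesis; and ideal compatibility $J_{B\tensor[_\alpha]\boxtimes{_\sigma}X}=B\tensor[_\alpha]\boxtimes{_\sigma}J_X$ follows from the given hypothesis $J_{X\tensor[_\sigma]\boxtimes{_\alpha}B}=J_X\tensor[_\sigma]\boxtimes{_\alpha}B$ together with $J_B=B$, after transporting through the flip. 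Applying Theorem \ref{MainResult} then gives $\mathcal O_{X\tensor[_\sigma]\boxtimes{_\alpha}B}\cong\mathcal O_B\tensor[_{\alpha'}]\boxtimes{_{\sigma',\mathbb T}}\mathcal O_X$, where $\alpha'$ is the action on $\mathcal O_B$ induced by $\alpha$.

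Next I would identify $\mathcal O_B\cong B\otimes C(\mathbb T)\cong B\otimes C^*(\mathbb Z)$, under which the gauge action becomes $\iota\otimes\lambda$ and the induced $G$-action $\alpha'$ becomes $\alpha\otimes\iota$. Since the balanced product retains only matching gauge-degrees (Proposition \ref{GradingOfProduct}), $\mathcal O_B\boxtimes_\mathbb T\mathcal O_X$ is the closed span of the elements $(b\otimes u_n)\boxtimes w$ with $b\in B$ and $w\in\mathcal O_X^n$. The $C^*(\mathbb Z)$ factor now sits on the \emph{action} side, so I would move it to the coaction side using the isomorphism of the form $\sigma_{23}$ established earlier (read with its $A=B$, $C=C^*(\mathbb Z)$, $B=\mathcal O_X$, $\delta=\sigma'$); this produces $B\tensor[_\alpha]\boxtimes{_{\Sigma_{23}(\sigma'\otimes\text{id})}}(\mathcal O_X\otimes C^*(\mathbb Z))$, whose generating elements are $b\boxtimes(w\otimes u_n)$ with $w\in\mathcal O_X^n$. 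Writing $\varepsilon$ for the coaction of $\mathbb Z$ coming from the gauge grading, we have $w\otimes u_n=\varepsilon(w)$, so this algebra equals $B\boxtimes\varepsilon(\mathcal O_X)\cong B\tensor[_\alpha]\boxtimes{_{\sigma'}}\mathcal O_X$; one final flip yields the desired $\mathcal O_X\tensor[_{\sigma'}]\boxtimes{_\alpha}B$.

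The routine parts are the two flips and the identification of $\mathcal O_B$; the step needing the most care is the $\sigma_{23}$ move together with the folding via $\varepsilon$. Here one must check that the trivial $C^*(\mathbb Z)$-factor is carried with the correct (trivial) $G$-structure throughout, and, crucially, that the gauge coaction $\varepsilon$ commutes with the $G$-coaction $\sigma'$ on $\mathcal O_X$, so that collapsing $\mathcal O_X\otimes C^*(\mathbb Z)$ onto $\varepsilon(\mathcal O_X)\cong\mathcal O_X$ intertwines $\Sigma_{23}(\sigma'\otimes\text{id})$ with $\sigma'$. This commutativity holds because $\sigma'$ preserves each gauge-homogeneous subspace $\mathcal O_X^n$, and the remaining bookkeeping is exactly the mirror of that in Lemma \ref{CPalgofXtimesB}.
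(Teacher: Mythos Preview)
Your proposal is correct and is precisely the kind of argument the paper has in mind: its own proof consists only of the sentence ``The proof is similar to the proof of Lemma \ref{CPalgofXtimesB},'' and what you have written is the natural reading of that sentence---flip to put the action on the left and coaction on the right, apply Theorem \ref{MainResult} and the identification $\mathcal O_B\cong B\otimes C^*(\mathbb Z)$, use (the inverse of) the $\sigma_{23}$ isomorphism to transfer the $C^*(\mathbb Z)$ factor across, and collapse via the gauge coaction $\varepsilon$. The care you flag about $\varepsilon$ commuting with $\sigma'$ is appropriate and holds for the reason you give.
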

\begin{proof}
The proof is similar to the proof of Lemma \ref{CPalgofXtimesB}.
\end{proof}
\begin{ex}
Let $(X,A)$ be a correspondence and let $(\gamma,\alpha)$ be an action
of a discrete group $G$ on $A$. Suppose further, that $J_{X\rtimes_{\gamma,r}
G}=J_X\rtimes_{\alpha,r}G$.
Recall that Example \ref{ActionCrossedIsTwistedCorr} showed that 
$X\rtimes_{\gamma,r} G\cong X\tensor[_\gamma]\boxtimes{_{\delta_G}} C_r^*(G)$.
Thus we have$$\mathcal O_{X\rtimes
_{\gamma,r}G}\cong\mathcal O_{X\tensor[_\gamma]\boxtimes{_{\delta_G}} C_r^*(G)}$$
Since $A\rtimes_{\alpha,r} G\cong A\tensor[_\alpha]\boxtimes{_{\delta_G}}
C_r^*(G)$, the condition that $J_{X\rtimes_{\gamma,r}
G}=J_X\rtimes_{\alpha,r}G$ can be restated as $J_{X\tensor[_\gamma]
\boxtimes{_{\delta_G}} C_r^*(G)}
=J_X\tensor[_\alpha]\boxtimes{_{\delta_G}} C_r^*(G)$.
Thus the correspondences $X$ and $C_r^*(G)$
to be ideal compatible.
Applying Lemma \ref{CPalgofXtimesB}, we have that $\mathcal O_{X
\tensor[_\gamma]\boxtimes{_{\delta_G}} C_r^*(G)}\cong\mathcal
O_X\tensor[_{\gamma'}]\boxtimes{_{\delta_G}} C_r^*(G)$ and this, in turn,
is isomorphic to $\mathcal O_X\rtimes_{\gamma',r} G$. Thus we have:
$$\mathcal O_{X\rtimes_{\gamma,r} G}\cong\mathcal O_X\rtimes_{\gamma',r}
G$$
\end{ex}
\begin{ex}
Suppose $(\sigma,\delta)$ is
a coaction of a discrete group $G$ on a correspondence $(X,A)$. Then
$X\rtimes_\sigma G\cong X\tensor[_\sigma]\boxtimes{_\lambda}c_0(G)$ by
Proposition \ref{CoactionCrossedIsTwistedCorr}. If $J_{X\tensor[_\sigma]
\boxtimes{_\lambda}c_0(G)}=J_X\tensor[_\delta]\boxtimes{_\lambda}c_0(G)$
(or in other terms $J_{X\rtimes_\sigma G}=J_X\rtimes_\delta G$) then Lemma
\ref{CPalgofBtimesX} tells us that
$$\mathcal O_{X\tensor[_\sigma]\boxtimes{_\lambda}c_0(G)}\cong\mathcal
O_X\tensor[_{\sigma'}]\boxtimes{_\lambda}c_0(G)$$ which we can restate
as $$\mathcal O_{X\rtimes_\sigma G}\cong\mathcal O_X\rtimes_{\sigma'}G$$
where $\sigma'$ is the induced coaction on $\mathcal O_X$.
\end{ex}
These last two examples are not new. In \cite{HaoNg} it was shown that
$\mathcal O_{X\rtimes_\gamma G}\cong\mathcal O_X\rtimes_{\gamma'}G$ for
any locally compact amenable group, and in \cite{Quigg3} it is shown that
$\mathcal O_{X\rtimes_{\gamma,r} G}\cong\mathcal O_X\rtimes_{\gamma',r}G$
for any locally compact group provided that $J_{X\rtimes_{\gamma,r}G}=
J_X\rtimes_{\alpha,r}G$. In \cite{Quigg2} it was shown that 
$\mathcal O_{X\rtimes_\sigma G}\cong\mathcal O_X\rtimes_{\sigma'}G$
for full (not reduced) coactions $\sigma$ provided certain technical
conditions involving the Katsura ideal are satisfied. Even though our
main result does not recover these results in their full generality,
it seems reasonable to hope that the main result of this paper might be 
extended to arbitrary locally compact groups and perhaps even quantum groups.
In this case we would have a single framework in which to describe all
results of this type.


\begin{thebibliography}{Bns}

\bibitem[Bns]{Quigg3}
{B{\'e}dos, Erik and Kaliszewski, S and Quigg, John and Robertson, David},
  \emph{A new look at crossed product correspondences and associated
  cÿ-algebras}, Journal of Mathematical Analysis and Applications \textbf{426}
  (Journal of Mathematical Analysis and Applications), no.~2, 1080--1098.

\bibitem[BS89]{Baaj}
Saad Baaj and Georges Skandalis, \emph{{$C^\ast$}-alg\`ebres de {H}opf et
  th\'eorie de {K}asparov}, $K$-Theory \textbf{2} (1989), no.~6, 683--721.

\bibitem[EKQR02]{enchilada}
S.~Echterhoff, S.~Kaliszewski, J.~Quigg, and I.~Raeburn, \emph{A categorical
  approach to imprimitivity theorems for {C}*-dynamical systems}, Dynamical
  Systems \textbf{180} (2002).

\bibitem[HN08]{HaoNg}
Gai Hao and Chi-Keung Ng, \emph{Crossed products of {$C^*$}-correspondences by
  amenable}, J. Math. Anal. Appl. \textbf{345} (2008), no.~2, 702--707.

\bibitem[KQR12]{Quigg2}
S~Kaliszewski, J~Quigg, and D~Robertson, \emph{Coactions on cuntz-pimsner
  algebras}, arXiv preprint arXiv:1204.5822 (2012).

\bibitem[Kat03]{Katsura2}
T~Katsura, \emph{A construction of {C}*-algebras from {C}*-correspondences},
  Contemporary Mathematics \textbf{335} (2003), 173--182.

\bibitem[Kat04]{Katsura1}
T.~Katsura, \emph{On {C}*-algebras associated with {C}*-correspondences},
  Journal of Functional Analysis \textbf{217} (2004), no.~2, 366--401.

\bibitem[Lan95]{Lance}
E.~C. Lance, \emph{Hilbert {C}*-modules a toolkit for operator algebraists},
  London Mathematical Society Lecture Note Series, vol. 210, Cambridge
  University Press, Cambridge, 1995.

\bibitem[Mor15]{Morgan}
Adam Morgan, \emph{Cuntz-pimsner algebras associated to tensor products of
  correspondences}, arXiv preprint arXiv:1510.04959 (2015).

\bibitem[MRW14]{Woronowicz}
Ralf Meyer, Sutanu Roy, and Stanis{\l}aw~Lech Woronowicz, \emph{Quantum
  group-twisted tensor products of $c^*$-algebras}, International Journal of
  Mathematics (2014).

\bibitem[Qui96]{Quigg}
J.~C. Quigg, \emph{Discrete {C}*-coactions and {C}*-algebraic bundles}, Journal
  of the Australian Mathematical Society (Series A) \textbf{60} (1996), no.~02,
  204--221.

\bibitem[Rae05]{GraphAlgebraBook}
I.~Raeburn, \emph{Graph algebras}, Regional Conference Series in Mathematics,
  no. 103, American Mathematical Society, 2005.

\bibitem[Tim08]{Timmermann}
Thomas Timmermann, \emph{An invitation to quantum groups and duality}, EMS
  Textbooks in Mathematics, European Mathematical Society (EMS), Z\"urich,
  2008.

\end{thebibliography}
\end{document}